\newcommand{\pr}[1]{#1^{\prime}}
\newcommand{\del}{\partial}
\newcommand{\mfrak}[1]{\mathfrak{#1}}
\newcommand{\mcal}[1]{\mathcal{#1}}
\newcommand{\mbb}[1]{\mathbb{#1}}
\newcommand{\mrm}[1]{\mathrm{#1}}
\newcommand{\what}[1]{\widehat{#1}}
\newcommand{\no}[1]{:\hspace{-3pt} #1\hspace{-3pt}:\hspace{3pt}}
\newcommand{\xRightarrow}[2][]{\ext@arrow 0359\Rightarrowfill@{#1}{#2}}
\theoremstyle{plain}
\newtheorem{thm}{Theorem}[section]
\newtheorem{lem}[thm]{Lemma}
\newtheorem{prop}[thm]{Proposition}
\newtheorem{cor}[thm]{Corollary}
\theoremstyle{definition}
\newtheorem{defn}[thm]{Definition}
\theoremstyle{remark}
\newtheorem{rem}[thm]{Remark}
\title{Free field approach to the Macdonald process}
\date{\today}
\author{Shinji Koshida}
\address{Department of Physics, Faculty of Science and Engineering, Chuo University, Kasuga, Bunkyo, Tokyo 112-8551, Japan}
\email{koshida@phys.chuo-u.ac.jp}
\begin{document}

\begin{abstract}
The Macdonald process is a stochastic process on the collection of partitions
that is a $(q,t)$-deformed generalization of the Schur process.
In this paper, we approach the Macdonald process identifying the space of symmetric functions
with a Fock representation of a Heisenberg algebra.
By using the free field realization of operators diagonalized by the Macdonald symmetric functions,
we propose a method of computing several correlation functions with respect to the Macdonald process.
It is well-known that expectation value of several observables for the Macdonald process admit determinantal expression.
We find that this determinantal structure is apparent in free field realization of the corresponding operators and, furthermore, it has a natural interpretation in the language of free fermions at the Schur limit.
We also propose a generalized Macdonald measure motivated by recent studies on generalized Macdonald functions
whose existence relies on the Hopf algebra structure of the Ding--Iohara--Miki algebra.
\end{abstract}

\subjclass[2010]{05E05,33D52,16T05}
\keywords{Macdonald process, Macdonald symmetric function, Ding--Iohara--Miki algebra, Generalized Macdonald functions, Generalized Macdonald measure}

\maketitle

\section{Introduction}
\subsection{Backgrounds}
Let $\mbb{Y}_{n}$ be the collection of partitions of $n\in\mbb{Z}_{\ge 1}$,
and set $\mbb{Y}:=\bigcup_{n=0}^{\infty}\mbb{Y}_{n}$, where $\mbb{Y}_{0}=\{\emptyset\}$.
The Macdonald measure $\mbb{MM}_{q,t}$ is a probability measure on $\mbb{Y}$ defined by \cite{BorodinCorwin2014,BorodinCorwinGorinShakirov2016}
\begin{equation*}
	\mbb{MM}_{q,t}(\lambda)=\frac{1}{\Pi(X,Y;q,t)}P_{\lambda}(X;q,t)Q_{\lambda}(Y;q,t),\ \ \lambda\in\mbb{Y}.
\end{equation*}
Here $P_{\lambda}(X;q,t)$ is the Macdonald symmetric function of $X=(x_{1},x_{2},\dots)$ for a partition $\lambda$
and $Q_{\lambda}(Y;q,t)$ is its dual symmetric function of $Y=(y_{1},y_{2},\dots)$ (see Section \ref{sect:symm_funcs} for definition).
From the Cauchy-type identity, the normalization factor is computed as
\begin{equation*}
	\Pi (X,Y;q,t)=\sum_{\lambda\in\mbb{Y}}P_{\lambda}(X;q,t)Q_{\lambda}(Y;q,t)=\prod_{i,j\ge 1}\frac{(tx_{i}y_{j};q)_{\infty}}{(x_{i}y_{j};q)_{\infty}},
\end{equation*}
where $(a;q)_{\infty}=\prod_{n=0}^{\infty}(1-aq^{n})$.
In the following, we suppress the parameters $q$ and $t$ if there is no ambiguity, for instance, by writing $P_{\lambda}(X)=P_{\lambda}(X;q,t)$.
Precisely speaking, to obtain a genuine probability measure,
we have to adopt a nonnegative specialization of the Macdonald symmetric functions whose classification was conjectured in \cite{Kerov1992} and recently proved in \cite{Matveev2019}.

As a generalization of the Macdonald measure, the $N$-step Macdonald process \cite{BorodinCorwin2014,BorodinCorwinGorinShakirov2016} for $N\ge 1$
is a probability measure $\mbb{MP}^{N}_{q,t}$ on $\mbb{Y}^{N}$ defined so that
the probability for a sequence $(\lambda^{(1)},\dots,\lambda^{(N)})\in \mbb{Y}^{N}$ of partitions is given by
\begin{align}
\label{eq:Macdonald_process}
	&\mbb{MP}^{N}_{q,t}(\lambda^{(1)},\dots,\lambda^{(N)})\\
	&:=\frac{P_{\lambda^{(1)}}(X^{(1)})\Psi_{\lambda^{(1)},\lambda^{(2)}}(Y^{(1)},X^{(2)})\cdots \Psi_{\lambda^{(N-1)},\lambda^{(N)}}(Y^{(N-1)},X^{(N)})Q_{\lambda^{(N)}}(Y^{(N)})}{\prod_{1\le i\le j\le N}\Pi(X^{(i)},Y^{(j)})}.\notag
\end{align}
Here the transition function $\Psi_{\lambda,\mu}(Y,X)$ is given by
\begin{equation}
\label{eq:transition_function}
	\Psi_{\lambda,\mu}(Y,X)=\sum_{\nu\in\mbb{Y}}Q_{\lambda/\nu}(Y)P_{\mu/\nu}(X),\ \ \lambda,\mu\in\mbb{Y},
\end{equation}
with $P_{\lambda/\nu}$ being the Macdonald symmetric functions for a skew-partition $\lambda/\nu$ and $Q_{\lambda/\mu}$ being its dual.
The case of $N=1$ is just the Macdonald measure, $\mbb{MM}_{q,t}=\mbb{MP}^{1}_{q,t}$.

It is known that the Macdonald process reduces to several interesting stochastic models by specializing the variables and limiting the parameters
and has given many applications to probability theory. Examples include the $q$-TASEP \cite{BorodinCorwin2014}, general $\beta$-ensembles \cite{BorodinGorin2015}, Hall--Littlewood plane partitions \cite{Dimitrov2018}, Whittaker processes \cite{Oconnell2012, OconnellSeppalainenZygouras2014,CorwinOconnellSeppalainenZygouras2014}, Kingman partition structures \cite{Petrov2009} (see also \cite{BorodinGorin2012,Borodin2014,BorodinPetrov2014,Corwin2014}).
In particular, when we set $q=t$, the Macdonald symmetric functions reduce to the Schur functions
and, correspondingly, the Macdonald process reduces to the Schur process \cite{Okounkov2001,OkounkovReshetikhin2003}.
The Schur process can be shown to be a determinantal point process (DPP) in a simple manner
owing to the infinite-wedge realization of the Schur functions and action of free fermions \cite{Okounkov2001}.
The analogous field theoretical approach to the Macdonald process is, however, absent to the author's knowledge.

\subsection{Correspondence among correlation functions}
In the present paper, we study the Macdonald process by identifying the space of symmetric functions with a Fock representation of a Heisenberg algebra as was suggested in \cite{FodaWu2017}. As its first step, we express a correlation function of the Macdonald process in terms of matrix elements of operators on the Fock space.
We set $\mbb{F}=\mbb{C}(q,t)$.
Then, we regard a function $f:\mbb{Y}\to\mbb{F}$ as a random variable or an observable.
\begin{defn}
Let $f_{1},\dots, f_{N}:\mbb{Y}\to\mbb{F}$ be random variables.
The correlation function $\mbb{E}^{N}_{q,t}[f_{1}[1]\cdots f_{N}[N]]$ with respect to the $N$-step Macdonald process
is defined by
\begin{equation*}
	\mbb{E}^{N}_{q,t}[f_{1}[1]\cdots f_{N}[N]]
	:=\sum_{(\lambda^{(1)},\dots,\lambda^{(N)})\in\mbb{Y}^{N}}f_{1}(\lambda^{(1)})\cdots f_{N}(\lambda^{(N)})\mbb{MP}_{q,t}^{N}(\lambda^{(1)},\dots,\lambda^{(N)}).
\end{equation*}
In the case of $N=1$, we simply write $\mbb{E}_{q,t}[f]:=\mbb{E}^{1}_{q,t}[f[1]]$.
\end{defn}

We write $\Lambda$ for the ring of symmetric functions over $\mbb{F}$.
Then it is isomorphic to a Fock representation $\mcal{F}$ and its dual $\mcal{F}^{\dagger}$ of a Heisenberg algebra (see Section \ref{sect:free_field_realization}),
where the Macdonald symmetric function $P_{\lambda}$ corresponding to $\lambda\in\mbb{Y}$ is
identified with $\ket{P_{\lambda}}\in\mcal{F}$ and its dual $Q_{\lambda}$ is identified with $\bra{Q_{\lambda}}\in\mcal{F}^{\dagger}$.
We introduce operators on $\mcal{F}$:
\begin{equation}
\label{eq:Gamma}
	\Gamma (X)_{\pm}=\exp\left(\sum_{n>0}\frac{1-t^{n}}{1-q^{n}}\frac{p_{n}(X)}{n}a_{\pm n}\right),
\end{equation}
where $a_{n}$, $n\in\mbb{Z}\backslash\{0\}$ are generators of the Heisenberg algebra
and $p_{n}(X)$, $n\ge 1$ is the $n$-th power-sum symmetric function of variables $X=(x_{1},x_{2},\dots)$ (see Section \ref{sect:symm_funcs}).

We write $\mbb{F}[\mbb{Y}]:=\{f:\mbb{Y}\to\mbb{F}\}$ for the set of random variables.
The method of computing correlation functions using operators (difference operators in typical cases) that are diagonalized by the Macdonald polynomials
has been developed in \cite{BorodinCorwin2014,BorodinCorwin2015, BorodinGorin2015, BorodinCorwinGorinShakirov2016, Dimitrov2018,GorinZhang2018}.
Here we shall formulate a complementary algebraic scheme to compute correlation functions.
 
\begin{defn}
Regarding the values of a random variable as the eigenvalues of an operator, we define a mapping
\begin{equation*}
	\mcal{O}:\mbb{F}[\mbb{Y}]\to \mrm{End}(\mcal{F});\ \ f\mapsto \sum_{\lambda\in\mbb{Y}}f(\lambda)\ket{P_{\lambda}}\bra{Q_{\lambda}}.
\end{equation*}
For a random variable $f\in \mbb{F}[\mbb{Y}]$, we also define
\begin{equation*}
	\psi_{f}^{X,Y}:=\Gamma (Y)_{+}\mcal{O}(f)\Gamma (X)_{-}.
\end{equation*}
\end{defn}

Then we have the following correspondence between correlation functions under the Macdonald process and matrix elements in the Fock space.
\begin{thm}
\label{thm:correlation_operator}
Let $f_{1},\dots, f_{N}\in \mbb{F}[\mbb{Y}]$ be random variables.
Then their correlation function with respect to the $N$-step Macdonald process becomes
\begin{equation}
\label{eq:correlation_matrix_elem}
	\mbb{E}_{q,t}^{N}[f_{1}[1]\cdots f_{N}[N]]
	=\frac{\braket{0|\psi_{f_{N}}^{X^{(N)},Y^{(N)}}\cdots \psi_{f_{1}}^{X^{(1)},Y^{(1)}}|0}} {\braket{0|\psi_{1}^{X^{(N)},Y^{(N)}}\cdots \psi_{1}^{X^{(1)},Y^{(1)}}|0}}.
\end{equation}
Here $\ket{0}\in\mcal{F}$ and $\bra{0}\in \mcal{F}^{\dagger}$ are the vacuum vectors
and $1\in \mbb{F}[\mbb{Y}]$ is the unit constant function.
\end{thm}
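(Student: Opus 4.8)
The plan is to evaluate the numerator on the right-hand side of \eqref{eq:correlation_matrix_elem} by replacing each operator $\mcal{O}(f_i)$ with its defining expression $\mcal{O}(f_i)=\sum_{\lambda^{(i)}}f_i(\lambda^{(i)})\ket{P_{\lambda^{(i)}}}\bra{Q_{\lambda^{(i)}}}$, which already supplies a resolution of the identity because $\braket{Q_\lambda|P_\mu}=\delta_{\lambda\mu}$. First I would record the elementary observation that $\mcal{O}(f)$ is diagonal in the Macdonald basis, $\mcal{O}(f)\ket{P_\mu}=f(\mu)\ket{P_\mu}$, so that each $f_i$ contributes only the scalar eigenvalue $f_i(\lambda^{(i)})$ and, in particular, $\mcal{O}(1)=\mrm{id}$. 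After this substitution the matrix element factorizes into a product of three kinds of elementary blocks: the two boundary terms $\braket{Q_{\lambda^{(1)}}|\Gamma(X^{(1)})_-|0}$ and $\braket{0|\Gamma(Y^{(N)})_+|P_{\lambda^{(N)}}}$, together with the interior transfer terms $\braket{Q_{\lambda^{(i)}}|\Gamma(X^{(i)})_-\Gamma(Y^{(i-1)})_+|P_{\lambda^{(i-1)}}}$ for $2\le i\le N$.

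The technical heart is to identify these blocks with the ingredients of \eqref{eq:Macdonald_process}. Under the isomorphism $\Lambda\cong\mcal{F}$ the operator $\Gamma(X)_-$ acts as multiplication by the Cauchy kernel $\Pi(X,\cdot)=\sum_\nu P_\nu(X)Q_\nu$, while $\Gamma(Y)_+$ acts as its adjoint, namely the operation of adjoining the alphabet $Y$, i.e.\ the substitution $p_n\mapsto p_n+p_n(Y)$. Expanding these two actions in the Macdonald basis by means of the branching rule $P_\mu(\,\cdot\,\cup Y)=\sum_\nu P_{\mu/\nu}(Y)P_\nu(\cdot)$ for $\Gamma(Y)_+$, and of the definition of the skew functions through $\langle Q_\lambda,P_\nu P_\mu\rangle$ for $\Gamma(X)_-$, I would establish the half-vertex operator formulas
\[
	\Gamma(X)_-\ket{P_\mu}=\sum_\lambda Q_{\lambda/\mu}(X)\ket{P_\lambda},\qquad \Gamma(Y)_+\ket{P_\lambda}=\sum_\mu P_{\lambda/\mu}(Y)\ket{P_\mu}.
\]
Deriving these cleanly, and fixing once and for all which of the $P$- or $Q$-skew functions is produced on each side, is the step I expect to be the main obstacle, since this is exactly where the $(q,t)$-deformed Heisenberg normalization enters and where an error would silently interchange $P$ and $Q$.

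Granting the half-vertex formulas, the interior block becomes $\sum_\nu P_{\lambda^{(i-1)}/\nu}(Y^{(i-1)})Q_{\lambda^{(i)}/\nu}(X^{(i)})$, while the boundary blocks evaluate to $\braket{Q_{\lambda^{(1)}}|\Gamma(X^{(1)})_-|0}=Q_{\lambda^{(1)}}(X^{(1)})$ and $\braket{0|\Gamma(Y^{(N)})_+|P_{\lambda^{(N)}}}=P_{\lambda^{(N)}}(Y^{(N)})$. Comparing with the transition function \eqref{eq:transition_function} and using the relation $Q_{\lambda/\mu}=(b_\lambda/b_\mu)P_{\lambda/\mu}$ between the two families of skew functions, where $b_\lambda$ is defined by $Q_\lambda=b_\lambda P_\lambda$, I would verify that the normalization factors $b_{\lambda^{(i)}}$ telescope along the chain, so that for each fixed $(\lambda^{(1)},\dots,\lambda^{(N)})$ the product of blocks equals precisely the numerator of \eqref{eq:Macdonald_process}. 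Summing against $\prod_i f_i(\lambda^{(i)})$ then reproduces the sum defining $\mbb{E}^{N}_{q,t}$ before normalization; this telescoping is the one genuinely delicate piece of bookkeeping.

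Finally, for the denominator I would put every $f_i=1$ and evaluate $\braket{0|\Gamma(Y^{(N)})_+\Gamma(X^{(N)})_-\cdots\Gamma(Y^{(1)})_+\Gamma(X^{(1)})_-|0}$ directly, using the commutation relation $\Gamma(Y)_+\Gamma(X)_-=\Pi(X,Y)\,\Gamma(X)_-\Gamma(Y)_+$. This relation is a one-line Baker--Campbell--Hausdorff computation whose commutator is the central scalar $\log\Pi(X,Y)$, obtained from the Heisenberg relations and the power-sum form of the Cauchy kernel. Normal-ordering all annihilation parts to the right and using $\Gamma(Y)_+\ket{0}=\ket{0}$ and $\bra{0}\Gamma(X)_-=\bra{0}$ produces exactly $\prod_{1\le i\le j\le N}\Pi(X^{(i)},Y^{(j)})$, the denominator of \eqref{eq:Macdonald_process}. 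Dividing the two computations yields \eqref{eq:correlation_matrix_elem}, and a useful consistency check is the case $f_i\equiv1$, in which the asserted identity reduces to the statement that $\mbb{MP}^{N}_{q,t}$ is a probability measure.
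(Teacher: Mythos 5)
Your proposal is correct, and the step you flagged as delicate does close up; but your route differs from the paper's in a way worth recording. The paper cuts the product by inserting $\mrm{Id}_{\mcal{F}}=\sum_{\nu}\ket{Q_{\nu}}\bra{P_{\nu}}$ \emph{between} consecutive $\psi_{f_{i}}$'s, so its elementary blocks are $\braket{P_{\nu^{(i)}}|\psi_{f_{i}}^{X^{(i)},Y^{(i)}}|Q_{\nu^{(i-1)}}}=\sum_{\lambda}f_{i}(\lambda)P_{\lambda/\nu^{(i-1)}}(X^{(i)})Q_{\lambda/\nu^{(i)}}(Y^{(i)})$, obtained by simply citing the matrix-element formulas $P_{\lambda/\mu}(X)=\braket{P_{\lambda}|\Gamma(X)_{-}|Q_{\mu}}$, $Q_{\lambda/\mu}(X)=\braket{P_{\mu}|\Gamma(X)_{+}|Q_{\lambda}}$ from Awata--Odake--Shiraishi (Proposition \ref{prop:skew-Macdonald_matrix_elem}); with this grouping every skew function comes out with the correct $P$/$Q$ parity at once and no normalization constants appear. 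Your dual grouping --- cutting at the $\mcal{O}(f_{i})$'s --- instead produces the opposite-parity blocks $Q_{\lambda^{(1)}}(X^{(1)})$, $P_{\lambda^{(N)}}(Y^{(N)})$ and $\sum_{\nu}Q_{\lambda^{(i)}/\nu}(X^{(i)})P_{\lambda^{(i-1)}/\nu}(Y^{(i-1)})$, so you genuinely need $Q_{\lambda/\mu}=(b_{\lambda}/b_{\mu})P_{\lambda/\mu}$; I verified that inside each interior block the $b_{\nu}$ cancels summand by summand, so the $\nu$-sum factors out a clean ratio, and the total prefactor is
\begin{equation*}
	b_{\lambda^{(1)}}\cdot\prod_{i=2}^{N}\frac{b_{\lambda^{(i)}}}{b_{\lambda^{(i-1)}}}\cdot b_{\lambda^{(N)}}^{-1}=1,
\end{equation*}
confirming your telescoping claim. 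Two further genuine differences: you rederive the half-vertex formulas from the realization of $\Gamma(X)_{-}$ as multiplication by the Cauchy kernel and $\Gamma(Y)_{+}$ as its adjoint (alphabet adjunction $p_{n}\mapsto p_{n}+p_{n}(Y)$), where the paper quotes \cite[Theorem 3.7]{AwataOdakeShiraishi1996} --- your sketch of this is sound and correctly assigns which side produces $P$- versus $Q$-skew functions; and you compute the denominator by Baker--Campbell--Hausdorff normal-ordering (note that $\Gamma(Y^{(j)})_{+}$ stands to the left of $\Gamma(X^{(i)})_{-}$ precisely when $i\le j$, which yields $\prod_{1\le i\le j\le N}\Pi(X^{(i)},Y^{(j)})$, generalizing the paper's $N=1$ remark), whereas the paper reads the denominator off the $f_{i}\equiv 1$ case of the same telescoped sum. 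Your version is somewhat more self-contained; the paper's grouping buys freedom from all $b_{\lambda}$-bookkeeping.
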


Theorem \ref{thm:correlation_operator} is proved in Section \ref{sect:free_field_realization}.

\subsection{Determinantal expression of operators}
Due to Theorem \ref{thm:correlation_operator}, the problem reduces to how efficiently we can compute the matrix elements in (\ref{eq:correlation_matrix_elem}).
We use the free field realization of operators that are diagonalized by the Macdonald symmetric functions due to \cite{FeiginHashizumeHoshinoShiraishiYanagida2009}
to make the computation of (\ref{eq:correlation_matrix_elem}) possible.

It is well-known \cite{BorodinCorwin2014,BorodinCorwinGorinShakirov2016} that several expectation values concerning the Macdonald process admit determinantal expression. The determinantal structure of the Macdonald processes is a long-standing mystery as it is not a DPP, and the initial motivation of this work was to understand the origin of this determinantal structure. We found that the determinantal structure gets apparent in the free field realization of operators
diagonalized by the Macdonald symmetric functions as we are overviewing below.

The Macdonald symmetric functions are simultaneous eigenfunctions of commuting operators including the Macdonald operators.
Under the isomorphism $\mcal{F}\simeq \Lambda$, these operators are identified with operators on $\mcal{F}$,
which were studied in \cite{Shiraishi2006,FeiginHashizumeHoshinoShiraishiYanagida2009} as the free field realization.
We seek different expression of these free field realizations involving determinant.
Let us introduce a {\it vertex operator}
\begin{equation}
\label{eq:vertex_eta}
	\eta (z)=\exp\left(\sum_{n>0}\frac{1-t^{-n}}{n}a_{-n}z^{n}\right)\exp\left(-\sum_{n>0}\frac{1-t^{n}}{n}a_{n}z^{-n}\right),
\end{equation}
which lies in $\mrm{End}(\mcal{F})[[z,z^{-1}]]$.
\begin{thm}
\label{thm:Macdonald_determinant}
Let $r=1,2,\dots$.
The free field realization $\what{E}_{r}$ of the $r$-th Macdonald operator is expressed as
\begin{equation}
\label{eq:Macdonald_free_field_determinant}
	\what{E}_{r}=\frac{t^{-r}}{r!}\int\left(\prod_{i=1}^{r}\frac{dz_{i}}{2\pi\sqrt{-1}}\right)\det\left(\frac{1}{z_{i}-t^{-1}z_{j}}\right)_{1\le i,j\le r}
				\no{\eta(z_{1})\cdots \eta(z_{r})}.
\end{equation}
\end{thm}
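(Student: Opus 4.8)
The plan is to establish the determinantal formula by first writing the $r$-th Macdonald operator in its known free field realization as a normal-ordered product of the vertex operators $\eta(z_i)$ dressed by a scalar kernel, and then recognizing that kernel as a determinant via a Cauchy-type identity.

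First I would recall the explicit free field realization of $\what{E}_r$ from \cite{FeiginHashizumeHoshinoShiraishiYanagida2009}. The key structural fact is that these operators are built from products $\eta(z_1)\cdots\eta(z_r)$, and that bringing such a product into normal-ordered form produces an explicit scalar factor coming from the commutation relations of the Heisenberg generators. Concretely, from the mode expansion in \eqref{eq:vertex_eta} one computes the pairwise operator product expansion
\begin{equation*}
	\eta(z_i)\eta(z_j)=\left(\frac{(1-t^{-1}z_j/z_i)(1-tz_j/z_i)}{(1-z_j/z_i)^2}\right)^{?}\no{\eta(z_i)\eta(z_j)},
\end{equation*}
where the precise scalar prefactor is fixed by the two-point function $\braket{0|\eta(z_i)\eta(z_j)|0}$, obtained by the standard Baker--Campbell--Hausdorff computation $e^{A}e^{B}=e^{[A,B]}e^{B}e^{A}$ with $A,B$ the annihilation/creation halves. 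Summing these pairwise contractions over $1\le i<j\le r$ gives the total scalar factor $\prod_{i<j} f(z_j/z_i)$ multiplying $\no{\eta(z_1)\cdots\eta(z_r)}$.

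Next I would match this to the contour-integral form. The integrand in \eqref{eq:Macdonald_free_field_determinant} picks out, via the residues at the poles $z_i=t^{-1}z_j$, exactly the eigenvalue data of the $r$-th Macdonald operator on each $\ket{P_\lambda}$. The combinatorial heart is the identity
\begin{equation*}
	\det\left(\frac{1}{z_i-t^{-1}z_j}\right)_{1\le i,j\le r}
	=\frac{\prod_{i<j}(z_i-z_j)(z_j-z_i)}{\prod_{i,j}(z_i-t^{-1}z_j)}\cdot(\text{sign/normalization}),
\end{equation*}
which is a Cauchy determinant evaluation. I would use this to convert the product-form scalar prefactor obtained from normal ordering into the determinant, absorbing the symmetric factors against the $\frac{1}{r!}$ and the prefactor $t^{-r}$. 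The antisymmetry of the Vandermonde-type numerator is exactly what is needed so that, after symmetrizing over the $r!$ orderings of $z_1,\dots,z_r$ under the symmetric integrand $\no{\eta(z_1)\cdots\eta(z_r)}$, only the determinantal combination survives.

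The main obstacle I anticipate is the bookkeeping of poles and the choice of integration contours: the residues must be taken at $z_i=t^{-1}z_j$ in a consistent nested order so that the iterated contour integral reproduces the correct eigenvalue $\sum_{\lambda}e_r(\text{content data})\ket{P_\lambda}\bra{Q_\lambda}$ of the Macdonald operator, and care is needed to verify that the Cauchy determinant's pole structure matches the operator-product poles of $\prod_{i<j}\eta(z_i)\eta(z_j)$ rather than introducing spurious contributions. Once the contour prescription is pinned down and the Cauchy determinant identity is invoked, the remaining steps are routine normal-ordering manipulations, so I would expect the proof to hinge almost entirely on correctly identifying the scalar kernel with the determinant and justifying the residue computation.
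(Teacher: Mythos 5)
Your starting point (the known free field realization of Theorem \ref{thm:Macdonald_operators_free_field}) and your final conversion tool (the Cauchy determinant) are both right, but the mechanism you propose for producing the scalar kernel is wrong. The factor $\prod_{1\le i<j\le r}\frac{1-z_{j}/z_{i}}{1-t^{-1}z_{j}/z_{i}}$ in that realization is \emph{not} a contraction/normal-ordering factor of the $\eta$'s: the product $\no{\eta(z_{1})\cdots\eta(z_{r})}$ appearing there is already normally ordered, so no BCH contractions arise, and in any case the actual OPE is $\eta(z)\eta(w)=\frac{(1-w/z)(1-qt^{-1}w/z)}{(1-qw/z)(1-t^{-1}w/z)}\no{\eta(z)\eta(w)}$, which depends on $q$, whereas the kernel of $\what{E}_{r}$ is $q$-independent; your guessed two-point factor $\frac{(1-t^{-1}w/z)(1-tw/z)}{(1-w/z)^{2}}$ is not the correct OPE in the Macdonald case. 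So the route ``derive the kernel by normal ordering, then recognize it as a determinant'' cannot work: the one-sided kernel $\prod_{i<j}\frac{z_{i}-z_{j}}{z_{i}-t^{-1}z_{j}}$ is not itself of Cauchy-determinant form, since the Cauchy formula produces the \emph{two-sided} symmetric product $\prod_{i<j}\frac{(z_{i}-z_{j})(z_{j}-z_{i})}{(z_{i}-t^{-1}z_{j})(z_{j}-t^{-1}z_{i})}$ together with $\prod_{i,j}$-type denominators.

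The missing idea is the symmetrization step and, crucially, the quantitative identity behind it. Because the constant-term functional and $\no{\eta(z_{1})\cdots\eta(z_{r})}$ are symmetric in $z_{1},\dots,z_{r}$, the kernel may be replaced by its symmetrization, and Lemma \ref{lem:key_symmetrization} (a consequence of the Hall--Littlewood evaluation $\sum_{\sigma}\sigma\bigl(\prod_{i<j}\frac{z_{i}-tz_{j}}{z_{i}-z_{j}}\bigr)=[n]_{t}!$, applied with $t\mapsto t^{-1}$) gives $\mrm{Sym}\bigl(\prod_{i<j}\frac{1-z_{j}/z_{i}}{1-t^{-1}z_{j}/z_{i}}\bigr)=\frac{[r]_{t^{-1}}!}{r!}\prod_{i<j}\frac{(1-z_{j}/z_{i})(1-z_{i}/z_{j})}{(1-t^{-1}z_{j}/z_{i})(1-t^{-1}z_{i}/z_{j})}$. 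Only after this does the Cauchy determinant with $y_{j}=t^{-1}z_{j}$ apply, and the constant $[r]_{t^{-1}}!$ is exactly what combines with $\frac{t^{-r(r+1)/2}}{(t^{-1};t^{-1})_{r}}$, $(1-t^{-1})^{r}$, $t^{r(r-1)/2}$ and the factor $\prod_{i}z_{i}$ (which absorbs the measure $\prod_{i}\frac{dz_{i}}{z_{i}}$ into $\prod_{i}dz_{i}$) to yield the clean prefactor $\frac{t^{-r}}{r!}$. Your phrase ``only the determinantal combination survives'' hides precisely this identity, and nothing in your sketch supplies it. Finally, your anticipated main obstacle --- contour prescriptions and residues at $z_{i}=t^{-1}z_{j}$ --- is a red herring in this setting: the integral here is a formal coefficient extraction with the fixed expansion $\frac{1}{z-\gamma w}\in\mbb{F}[w]((z^{-1}))$, so no contour bookkeeping or residue analysis enters the proof at all; the entire argument is formal algebraic manipulation under the constant-term functional.
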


Throughout this paper, we understand the integral $\int \frac{dz}{2\pi\sqrt{-1}}$ as the functional taking the coefficient of $z^{-1}$ and a rational function of formal variables like $\frac{1}{z-\gamma w}$, $\gamma\in\mbb{F}$ as a formal series expanded in $\mbb{F}[w]((z^{-1}))$. In particular, $\frac{1}{z-\gamma w}\neq -\frac{1}{\gamma w-z}$.
We prove Theorem \ref{thm:Macdonald_determinant} in Section \ref{sect:determinant} and also show determinantal expression of other operators.
Combining these determinantal expressions with Theorem \ref{thm:correlation_operator}, we derive determinantal formulas of correlation functions in Section \ref{sect:expectation}. Hence, we can say that the determinant appearing as a integrand in (\ref{eq:Macdonald_free_field_determinant}) gives determinantal formulas. 

To understand still more fundamental origins of the determinantal structure, we also consider the Schur-limit in Section \ref{sect:determinant}. Consequently, we find that the operator (\ref{eq:Macdonald_free_field_determinant}) reduces to action of fermionic operators that give the determinantal integrand in (\ref{eq:Macdonald_free_field_determinant}). 

We remark that our approach is fully algebraic and formal and does not require any specialization of the variables.
Therefore, our results apply to any models obtained by specialization of the Macdonald process.

\subsection{Generalized Macdonald measure}
We also propose a certain generalization of the Macdonald measure.
It is known that the Ding--Iohara--Miki (DIM) algebra \cite{DingIohara1997,Miki2007} plays a relevant role in the theory of the Macdonald symmetric functions.
In \cite{AwataFeiginHoshinoKanaiShiraishiYanagida2011}, the authors proposed a family of generalized Macdonald functions using the coproduct structure of the DIM algebra. We consider a generalization of the Macdonald measure replacing the Macdonald symmetric functions by generalized Macdonald functions as follows.

Let $m\in\mbb{N}$ be fixed.
In \cite{FeiginHoshinoShibaharaShiraishiYanagida2010,AwataFeiginHoshinoKanaiShiraishiYanagida2011}, it was proved that the $m$-fold tensor product
$\widetilde{\mcal{F}}^{\otimes m}$ of $\widetilde{\mcal{F}}:=\mbb{C}(q^{1/4},t^{1/4})\otimes_{\mbb{F}}\mcal{F}$
admits a Macdonald type basis labeled by $m$-tuple of partitions.
Under the isomorphism $\mcal{F}^{\otimes m}\simeq \Lambda^{\otimes m}$,
the level $m$ generalized Macdonald functions $P_{\bm{\lambda}}(\bm{X})$, $\bm{\lambda}=(\lambda^{(1)},\dots,\lambda^{(m)})\in\mbb{Y}^{m}$, $\bm{X}=(X^{(1)},\dots, X^{(m)})$,
and their dual functions $Q_{\bm{\lambda}}(\bm{X})$ are defined (see Proposition \ref{prop:char_generalized_Macdonald} and Definition \ref{def:generalized_Macdonald} below).
In Section \ref{sect:gen_Mac_meas}, we will define the level $m$ generalized Macdonald measure as a probability measure $\mbb{GM}^{m}_{q,t}$ on $\mbb{Y}^{m}$ so that
\begin{equation*}
	\mbb{GM}^{m}_{q,t}(\bm{\lambda})\propto P_{\bm{\lambda}}(\bm{X})Q_{\bm{\lambda}}(\bm{Y}),\ \ \bm{\lambda}\in\mbb{Y}^{m}
\end{equation*}
and write $\mbb{GE}^{m}_{q,t}$ for the expectation value under $\mbb{GM}^{m}_{q,t}$.

As a demonstration, we will compute the expectation value of a random variable $\what{\mcal{E}}_{1}^{(m)}$, defined by
\begin{equation*}
	\what{\mcal{E}}^{(m)}_{1}(\bm{\lambda}):=\sum_{j=1}^{m}\left(1+(t-1)\sum_{i\ge 1}(q^{\lambda^{(j)}_{i}}-1)t^{-i}\right),\ \ \bm{\lambda}=(\lambda^{(1)},\dots, \lambda^{(m)})\in\mbb{Y}^{m}.
\end{equation*}

\begin{thm}
\label{thm:generalized_Mcdonald_expectation}
Set
\begin{align*}
	H(w;X)&:=\prod_{i\ge 1}\frac{1-tx_{i}w}{1-x_{i}w}, \\
	M(z;X)&:=\prod_{k\ge 1}\frac{(1-zx_{k})(1-q^{-1}zx_{k})}{(1-q^{-1}tzx_{k})(1-t^{-1}zx_{k})},\ \ X=(x_{1},x_{2},\dots),
\end{align*}
and $p=q/t$.
Then, we have
\begin{align*}
	\mbb{GE}^{m}_{q,t}[\what{\mcal{E}}_{1}^{(m)}]=\int\frac{dz}{2\pi\sqrt{-1}z}\sum_{i=1}^{m}\frac{\prod_{j=1}^{i-1}M(p^{-(j+1)/2}z;Y^{(i)})}{H(p^{(i-1)/2}z^{-1};X^{(i)})H(t^{-1}p^{-(i-1)/2}z;Y^{(i)})}.
\end{align*}
\end{thm}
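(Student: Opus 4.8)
The plan is to realize $\what{\mcal{E}}_{1}^{(m)}$ as the eigenvalue of one explicit operator on $\widetilde{\mcal{F}}^{\otimes m}$ that is diagonalized by the generalized Macdonald functions, and then to evaluate the resulting matrix element by normal ordering. First I would identify the operator: by the characterization of the level $m$ generalized Macdonald functions (Proposition \ref{prop:char_generalized_Macdonald}), $P_{\bm{\lambda}}(\bm{X})$ is a joint eigenfunction of the iterated coproduct $\Delta^{(m-1)}$ applied to the zero mode of the DIM current $\eta(z)$, and the quantity $1+(t-1)\sum_{i\ge1}(q^{\lambda^{(j)}_{i}}-1)t^{-i}$ is precisely the eigenvalue attached to the $j$-th tensor slot. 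Summing over $j$ shows that $\what{\mcal{E}}_{1}^{(m)}(\bm{\lambda})$ is the eigenvalue on $P_{\bm{\lambda}}$ of the single operator
\[
\what{\mcal{E}}_{1}^{(m)}=\int\frac{dz}{2\pi\sqrt{-1}z}\,\Delta^{(m-1)}\eta(z)\in\mrm{End}(\widetilde{\mcal{F}}^{\otimes m}),
\]
which on a single factor is nothing but $(t-1)$ times the $r=1$ operator of Theorem \ref{thm:Macdonald_determinant}, since there $\det\left(\frac{1}{z-t^{-1}z}\right)=\frac{1}{z(1-t^{-1})}$ and the constant term of $\eta(z)$ on $\ket{0}$ is $1$.

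Second, I would pass to the Fock space. Exactly as in Theorem \ref{thm:correlation_operator} and its generalized-measure analogue, the expectation
\[
\mbb{GE}^{m}_{q,t}[\what{\mcal{E}}_{1}^{(m)}]=\frac{\sum_{\bm{\lambda}}\what{\mcal{E}}_{1}^{(m)}(\bm{\lambda})P_{\bm{\lambda}}(\bm{X})Q_{\bm{\lambda}}(\bm{Y})}{\sum_{\bm{\lambda}}P_{\bm{\lambda}}(\bm{X})Q_{\bm{\lambda}}(\bm{Y})}
\]
becomes the ratio of the matrix element of $\what{\mcal{E}}_{1}^{(m)}$ between the Cauchy states built from $\Gamma(\bm{X})_{-}\ket{0}^{\otimes m}$ and $\bra{0}^{\otimes m}\Gamma(\bm{Y})_{+}$ (whose expansions carry the coefficients $P_{\bm{\lambda}}(\bm{X})$ and $Q_{\bm{\lambda}}(\bm{Y})$) to the norm of those same states. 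Because inserting the operator only adds contractions on top of the Cauchy pairing, the bare Cauchy kernel — and with it the measure normalization $\prod\Pi$ — cancels between numerator and denominator, and only the extra $\eta$-contractions survive.

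Third comes the normal-ordering computation. In the $i$-th summand of $\Delta^{(m-1)}\eta(z)$, the current $\eta(z)$ sits in the $i$-th tensor factor while the coproduct dresses the factors $j<i$ with the negative DIM Cartan current. Moving $\eta(z)$ through $\Gamma(X^{(i)})_{-}$ and $\Gamma(Y^{(i)})_{+}$ produces, via the Heisenberg commutation relations, exactly the reciprocals of $H(p^{(i-1)/2}z^{-1};X^{(i)})$ and $H(t^{-1}p^{-(i-1)/2}z;Y^{(i)})$: this is the same elementary contraction that yields $H(w;X)=\prod_{i}\frac{1-tx_{i}w}{1-x_{i}w}$, and it accounts for the two factors in the denominator of each summand. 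The Cartan currents carried into the lower factors contract to give the products $\prod_{j=1}^{i-1}M(p^{-(j+1)/2}z;Y^{(i)})$, and the half-integer powers of $p=q/t$ decorating every spectral argument are produced by the grading twist that the iterated DIM coproduct imposes on $z$ factor by factor. Taking the constant term $\int\frac{dz}{2\pi\sqrt{-1}z}$ and summing over $i$ then assembles the claimed formula.

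The main obstacle will be the precise bookkeeping in the third step: tracking how $\Delta^{(m-1)}$ distributes the central and grading shifts so that the arguments come out as $p^{(i-1)/2}z^{-1}$, $t^{-1}p^{-(i-1)/2}z$ and $p^{-(j+1)/2}z$, and verifying that each cross-contraction collapses to exactly the four-fold product defining $M(z;X)$ rather than some unsimplified rational function. Getting these shifts and the $X^{(i)}$-versus-$Y^{(i)}$ assignment right — rather than the OPE factors themselves, which are routine — is where the real work lies, and it is here that the noncocommutativity of the DIM coproduct and the $q^{1/4},t^{1/4}$-refinement underlying $\widetilde{\mcal{F}}$ must be handled with care.
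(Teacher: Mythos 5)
Your proposal is correct and takes essentially the same route as the paper: identify $\what{\mcal{E}}_{1}^{(m)}$ with the level-$m$ zero mode $X^{+}_{0}=\rho^{(m)}(x^{+}_{0})$ via Proposition \ref{prop:char_generalized_Macdonald}, compute $\braket{\bm{0}|\bm{\Gamma}(\bm{Y})_{+}X^{+}_{0}\bm{\Gamma}(\bm{X})_{-}|\bm{0}}$ in two ways so that $\Pi^{(m)}(\bm{X},\bm{Y})$ cancels, and normal-order the coproduct summands $\tilde{\Lambda}_{i}(z)$ (with $\eta$ in slot $i$ and $\varphi^{-}$ dressing the slots $j<i$) against the $\Gamma$'s exactly as in Section \ref{sect:expectation}. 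The only blemishes are notational: the coproduct is applied to the current $x^{+}(z)$ before the representation $\rho^{\otimes m}$ is taken (the paper's $\Delta^{(m)}$, not ``$\Delta^{(m-1)}\eta(z)$''), and the $\varphi^{-}$ sitting in slot $j$ contracts with $\Gamma(Y^{(j)})_{+}$, which is precisely where the $M$-factors come from.
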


\subsection{Future directions}

We close this introduction by make comments on future directions.
\subsubsection{Application to stochastic models}
Since our results are formal and do not require any specialization of variables,
they apply to any reduction of the Macdonald process.
For application to stochastic models, however, one has to specialize variables and carry out further analyses
typically studying asymptotic behaviors (e.g. \cite{BorodinCorwinRemenik2013,BorodinCorwin2014,BorodinCorwin2014b, BorodinCorwinFerrari2014,Barraquand2015, BorodinGorin2015, FerrariVeto2015}).
It is not clear so far how our results are useful for such application and more study is needed.

\subsubsection{Further studies on generalized Macdonald measure}
We need to study generalized Macdonald measure in application to stochastic models.
For this purpose, we have to consider positive specialization of generalized Macdonald functions to define a genuine probability measure.
We also need to define {\it skew} generalized Macdonald functions and combinatorial formula of their few-variable specialization.
To all these aims, the first step is to study the Pieri-type formulas for generalized Macdonald functions.

\subsubsection{Elliptic generalization}
In \cite{Saito2013, Saito2014}, the elliptic Macdonald operators were realized as operators on a Fock space
by means of the elliptic DIM algebra.
Though the elliptic Macdonald symmetric functions as a basis of the Fock space have not been captured so far,
once a Macdonald-type basis is found, a similar story as in this paper would work in the elliptic case.

\subsubsection{Relation to higher spin six-vertex models}
Another pillar than the Macdonald process in the field of integrable probability is a higher spin six-vertex model and its variants \cite{BorodinCorwinGorin2016,CorwinPetrov2016,Borodin2017,BorodinPetrov2018,BorodinWheeler2018,BorodinWheeler2019}
and there are attempts to understand these two on the same footing \cite{GarbalideGierWheeler2017,Borodin2018,BorodinWheeler2018, BorodinWheeler2019}.
Notably, partition functions of a higher spin six-vertex model give a family of symmetric rational functions \cite{Borodin2017}
that are regarded as generalization of the Hall--Littlewood polynomials.
It is also known \cite{BrubakerBuciumasBumpFriedberg2018, BrubakerBuciumasBumpGustafsson2018} that,
for some lattice models such as a metaplectic ice model,
a vertex operator acting on a Fock space works as a transfer matrix
and its matrix elements give a family of symmetric functions.
Since a Fock space and vertex operators are also basic tools in this paper,
the present work could give a new insight to this subject from a perspective of
the representation theory of quantum algebras.

\vspace{10pt}
The present paper is organized as follows:
In Section \ref{sect:symm_funcs}, we review the basic notions of symmetric functions and introduce the Macdonald symmetric functions, which are needed to define the Macdonald process (\ref{eq:Macdonald_process}).
In Section \ref{sect:free_field_realization}, we recall that the space of symmetric functions is isomorphic to a Fock representation of a Heisenberg algebra and prove Theorem \ref{thm:correlation_operator}. We also see the free field realization of operators that are diagonalized by the Macdonald symmetric functions.
In Section \ref{sect:determinant}, we rewrite the free field realizations in Section \ref{sect:free_field_realization} by using determinants
to prove Theorem \ref{thm:Macdonald_determinant} and its analogues to other operators. We also consider the Schur-limit to better understand the origin of the determinantal structure.
In Section \ref{sect:expectation}, we present applications of Theorem \ref{thm:correlation_operator} and the results in Section \ref{sect:determinant} to compute correlation functions of some observables.
In Section \ref{sect:gen_Mac_meas}, after overviewing the theory of the DIM algebra, we introduce a generalized Macdonald measure and prove Theorem \ref{thm:generalized_Mcdonald_expectation}.
In Appendix \ref{app:proof_free_field_G_inverse}, we give a proof of the free field realization of a certain family of operators diagonalized by the Macdonald symmetric functions.

\vspace{10pt}
Throughout this paper, we use the notations
\begin{equation*}
	[n]_{q}=\frac{1-q^{n}}{1-q},\ \ [n]_{q}!=\prod_{k=1}^{n}[k]_{q},\ \ (x;q)_{n}=\prod_{k=0}^{n-1}(1-xq^{k}).
\end{equation*}

\subsection*{Acknowledgements}
The author is grateful to Makoto Katori, Tomohiro Sasamoto, Takashi Imamura, Yoshihiro Takeyama and Alexander Bufetov for fruitful discussion.
He also thanks anonymous referees for helping him improve the manuscript with convenient advice and suggestions.
This work was supported by the Grant-in-Aid for JSPS Fellows (No.\ 17J09658, No.\ 19J01279)

\section{Preliminaries on symmetric functions}
\label{sect:symm_funcs}
In this paper, we regard the parameters $q$ and $t$ as indeterminates unless otherwise specified and set $\mbb{F}:=\mbb{C}(q,t)$.
Let us prepare some terminologies of symmetric functions and introduce the Macdonald symmetric functions.
The relevant reference is \cite{Macdonald1995}.

\subsection{Ring of symmetric functions}
Let $\Lambda^{(n)}=\mbb{F}[x_{1},\dots, x_{n}]^{\mfrak{S}_{n}}$ be the ring of symmetric polynomials in $n$ variables over $\mbb{F}$.
The ring of symmetric functions is defined as the projective limit $\Lambda=\varprojlim_{n}\Lambda^{(n)}$ in the category of graded rings, where, given $m>n$, the projection $\Lambda^{(m)}\to \Lambda^{(n)}$ sends the last $m-n$ variables to zero.
For a symmetric function $F\in\Lambda$, its image under the canonical surjection $\Lambda\twoheadrightarrow \Lambda^{(n)}$, $n\in\mbb{Z}_{\ge 0}$ will be denoted as $F^{(n)}$, and call it the $n$-variable reduction of $F$.
In the following, we write $X=(x_{1},x_{2},\dots)$ for a set of infinitely many variables
and use the notation $\Lambda_{X}$ if the variables are need to be specified.

For a partition $\lambda\in \mbb{Y}_{n}$ of $n\in\mbb{Z}_{\ge 1}$, we write $|\lambda|=n$ for its weight.
Its length is defined by $\ell(\lambda):=\mrm{max}\{i=1,2,\dots |\lambda_{i}>0\}$,
and the multiplicity of $i\in\mbb{Z}_{\ge 1}$ in $\lambda$ is defined by $m_{i}(\lambda):=|\{j=1,2,\dots|\lambda_{j}=i\}|$.
In terms of the multiplicity, we also express a partition as $\lambda=(1^{m_{1}(\lambda)}2^{m_{2}(\lambda)}\cdots)$.
For two partitions $\lambda$, $\mu\in\mbb{Y}$, we write $\lambda\ge\mu$ if $|\lambda|=|\mu|$ and
$\lambda_{1}+\cdots+\lambda_{n}\ge \mu_{1}+\cdots+\mu_{n}$, $n=1,2,\dots$.
Then $\ge$ defines a partial order on $\mbb{Y}$ called the dominance order.
For two partitions $\lambda$, $\mu\in\mbb{Y}$, we say that $\mu$ is included in $\lambda$ if
$\lambda_{i}\ge \mu_{i}$, $i=1,2,\dots$, hold, and write $\mu\subset\lambda$.
Their difference is called a skew-partition and denoted as $\lambda/\mu$.

Let us introduce some important symmetric functions.
For $r\in\mbb{Z}_{>0}$, the $r$-th elementary symmetric function $e_{r}(X)$ is defined by
$e_{r}(X):=\sum_{i_{1}<\cdots <i_{r}}x_{i_{1}}\cdots x_{i_{r}}$
and the $r$-th power-sum symmetric function $p_{r}(X)$ is defined by
$p_{r}(X):=\sum_{i\ge 1}x_{i}^{r}$.
For a partition $\lambda=(\lambda_{1},\lambda_{2},\dots)\in\mbb{Y}$, we also define
$p_{\lambda}(X):=p_{\lambda_{1}}(X)p_{\lambda_{2}}(X)\cdots$.
Other important symmetric functions are monomial symmetric functions.
Let $\lambda\in\mbb{Y}$ be a partition and $n$ be an integer larger than or equal to $\ell(\lambda)$.
We may regard $\lambda=(\lambda_{1},\dots,\lambda_{n})$ as an element in $(\mbb{Z}_{\ge 0})^{n}$, on which the $n$-th symmetric group acts by permutation of components. A monomial symmetric polynomial of $n$ variables is defined by $m^{(n)}_{\lambda}(x_{1},\dots, x_{n})=\sum_{\sigma}x_{1}^{\lambda_{\sigma (1)}}\cdots x_{n}^{\lambda_{\sigma (n)}}$, where the sum runs over distinct terms. Then the collection $\set{m_{\lambda}^{(n)}(x_{1},\dots, x_{n}):n\ge \ell (\lambda)}$ determines a unique symmetric function $m_{\lambda}(X)\in\Lambda$ called the monomial symmetric function corresponding to $\lambda$.
It is known that the collections $\set{p_{\lambda}}_{\lambda\in\mbb{Y}}$ and $\set{m_{\lambda}}_{\lambda\in\mbb{Y}}$ form $\mbb{F}$-bases of $\Lambda$.

An algebraic homomorphism $\rho:\Lambda\to \mbb{F}$ is called a specialization. We often write the image of $F\in\Lambda$ under a specialization $\rho$ as $F(\rho)$ instead of $\rho (F)$. We frequently consider specializations associated to partitions. For $\lambda\in\mbb{Y}$ and $n\in\mbb{Z}$, we define a specialization $q^{\lambda}t^{-\delta+n}:\Lambda\to\mbb{F}$ by
\begin{align*}
	p_{r}(q^{\lambda}t^{-\delta+n})&:=\sum_{i=1}^{\ell(\lambda)}(q^{\lambda_{i}}t^{-i+n})^{r}+\frac{t^{-r(\ell(\lambda)+1-n)}}{1-t^{-r}},& r&=1,2,\dots,
\end{align*}
which is interpreted as substitution $x_{i}\mapsto q^{\lambda_{i}}t^{-i+n}$, $i\ge 1$.
A specialization $q^{-\lambda}t^{\delta-n}$ is defined just by replacing $q$ by $q^{-1}$ and $t$ by $t^{-1}$ in the above formula.

\subsection{Macdonald symmetric functions}
\label{subsect:intro_macdonald}
To define the Macdonald symmetric functions, we introduce the Macdonald difference operators.
Fix $n\in\mbb{Z}_{\ge 1}$.
Then for $r=1,\dots, n$, the $r$-th Macdonald difference operator $D^{(n)}_{r}$ acting on $\Lambda^{(n)}$ is defined by \cite[Section VI. 3]{Macdonald1995}
\begin{equation*}
	D^{(n)}_{r}=D^{(n)}_{r}(q,t):=t^{r(r-1)/2}\sum_{\substack{I\subset \{1,2,\dots, n\}\\|I|=r}}\prod_{\substack{i\in I\\j\not\in I}}\frac{tx_{i}-x_{j}}{x_{i}-x_{j}}\prod_{i\in I}T_{q,x_{i}},
\end{equation*}
where $T_{q,x_{i}}$ is the $q$-shift operator $(T_{q,x_{i}}f)(x_{1},\dots,x_{n}):=f(x_{1},\dots, qx_{i},\dots, x_{n})$.

For a partition $\lambda\in\mbb{Y}$, the corresponding Macdonald symmetric function $P_{\lambda}(X;q,t)\in \Lambda_{X}$ is uniquely characterized by the triangularity
\begin{equation}
\label{eq:triangularity}
	P_{\lambda}(X;q,t)=m_{\lambda}(X)+\sum_{\mu;\mu<\lambda}c_{\lambda\mu}(q,t)m_{\mu}(X),\ \  c_{\lambda\mu}(q,t)\in\mbb{F}
\end{equation}
and the property that, for each $n\ge \ell(\lambda)$, the $n$-variable reduction $P_{\lambda}^{(n)}(x_{1},\dots, x_{n};q,t)$ is a simultaneous eigenfunction of the Macdonald difference operators so that
\begin{equation*}
	D^{(n)}_{r}P^{(n)}_{\lambda}(x_{1},\dots,x_{n};q,t)=e^{(n)}_{r}(q^{\lambda_{1}}t^{n-1},\dots, q^{\lambda_{n}})P^{(n)}_{\lambda}(x_{1},\dots,x_{n};q,t)\end{equation*}
for each $r=1,\dots, n$. Note that the triangularity property ensures that the Macdonald symmetric functions $P_{\lambda}(X;q,t)$, $\lambda\in\mbb{Y}$ form a basis of $\Lambda$.

Though the Macdonald difference operators themselves do not extend to operators on $\Lambda$, it is known \cite[Proposition 3.3]{FeiginHashizumeHoshinoShiraishiYanagida2009} (see also \cite[Chapter VI, Section 4]{Macdonald1995}) that we may define ones that extend to $\Lambda$.
For each $r\in\mbb{Z}_{\ge 1}$ and $n\ge r$, we consider a difference operator on $\Lambda^{(n)}$
\begin{equation*}
	E^{(n)}_{r}:=\sum_{k=0}^{r}\frac{t^{-nr-\binom{r-k+1}{2}}}{(t^{-1};t^{-1})_{r-k}}D^{(n)}_{k},
\end{equation*}
with the convention that $D^{(n)}_{0}=1$.
Then the family $\set{E^{(n)}_{r}:n\ge r}$ gives a unique operator $E_{r}=E_{r}(q,t)=\varprojlim_{n}E^{(n)}_{r}$ that is diagonalized by the Macdonald symmetric functions so that
\begin{equation*}
	E_{r}P_{\lambda}(X;q,t)=e_{r}(q^{\lambda}t^{-\delta})P_{\lambda}(X;q,t),\ \ \lambda\in\mbb{Y}.
\end{equation*}

We next introduce the Macdonald symmetric functions for skew-partitions.
Let $X=(x_{1},x_{2},\dots)$ and $Y=(y_{1},y_{2},\dots)$ be two sets of variables
and suppose that they are combined to be a single set of variables $(X,Y)=(x_{1},x_{2},\dots,y_{1},y_{2},\dots)$.
Then we can think of a Macdonald symmetric function $P_{\lambda}(X,Y)\in\Lambda_{(X,Y)}$ of these variables.
The Macdonald symmetric function $P_{\lambda/\mu}$ for a skew-partition $\lambda/\mu$ is defined by
$P_{\lambda}(X,Y)=\sum_{\mu\in\mbb{Y}}P_{\lambda/\mu}(X)P_{\mu}(Y)$.

\subsection{Definition using an inner product}
The Macdonald symmetric functions are also characterized as an orthogonal basis of $\Lambda$
with respect to an inner product defined below.
We write the inner product as $\braket{\cdot,\cdot}_{q,t}:\Lambda\times\Lambda\to \mbb{F}$ and define it as \cite[Section VI. 2]{Macdonald1995}
\begin{equation*}
	\braket{p_{\lambda},p_{\mu}}_{q,t}=z_{\lambda}(q,t)\delta_{\lambda,\mu},\ \ \lambda,\mu\in\mbb{Y},
\end{equation*}
where we set
\begin{equation*}
	z_{\lambda}(q,t)=z_{\lambda}\prod_{i=1}^{\ell(\lambda)}\frac{1-q^{\lambda_{i}}}{1-t^{\lambda_{i}}},\ \ z_{\lambda}=\prod_{i=1}^{\infty}m_{i}(\lambda)!i^{m_{i}(\lambda)},\ \ \lambda\in\mbb{Y}.
\end{equation*}
Then, the Macdonald symmetric functions $P_{\lambda}$, $\lambda\in\mbb{Y}$ are characterized by the triangularity (\ref{eq:triangularity}) and orthogonality: $\braket{P_{\lambda},P_{\mu}}=0$ if $\lambda\neq \mu$.

Note that, when we set $Q_{\lambda}:=\frac{1}{\braket{P_{\lambda},P_{\lambda}}_{q,t}}P_{\lambda}$, $\lambda\in\mbb{Y}$, the collection $\{Q_{\lambda}\}_{\lambda\in\mbb{Y}}$ is the dual basis of $\{P_{\lambda}\}_{\lambda\in\mbb{Y}}$ with respect to the inner product $\braket{\cdot,\cdot}_{q,t}$. 
Similarly to the usual Macdonald symmetric functions, we define $Q_{\lambda/\mu}$ for a skew-partition by $Q_{\lambda}(X,Y)=\sum_{\mu\in\mbb{Y}}Q_{\lambda/\mu}(X)Q_{\mu}(Y)$.

\subsection{Another series of operators}
\label{subsec:other_operators}
We also consider the following operators:
Let $r,n\in\mbb{Z}_{\ge 1}$ and define an operator on $\Lambda^{(n)}$ by
\begin{equation*}
	H_{r}^{(n)}:=\sum_{\substack{\nu\in (\mbb{Z}_{\ge 0})^{n}\\ |\nu|=r}}\left(\prod_{1\le i<j\le n}\frac{q^{\nu_{i}}x_{i}-q^{\nu_{j}}x_{j}}{x_{i}-x_{j}}\right)
		\left(\prod_{i,j=1}^{n}\frac{(tx_{i}/x_{j};q)_{\nu_{i}}}{(qx_{i}/x_{j};q)_{\nu_{i}}}\right)\prod_{i=1}^{n}T_{q,x_{i}}^{\nu_{i}}.
\end{equation*}
Here we wrote $\nu=(\nu_{1},\dots,\nu_{n})$ and $|\nu|=\sum_{i=1}^{n}\nu_{i}$.
It was announced in \cite[Proposition 3.24]{FeiginHashizumeHoshinoShiraishiYanagida2009} without a proof that these operators are also diagonalized by the Macdonald polynomials so that
\begin{equation*}
	H_{r}^{(n)}P^{(n)}_{\lambda}(x_{1},\dots, x_{n})=g^{(n)}_{r}(q^{\lambda_{1}}t^{n-1},\dots, q^{\lambda_{n}};q,t)P^{(n)}_{\lambda}(x_{1},\dots, x_{n})
\end{equation*}
for all $\lambda$ and $n\ge \ell(\lambda)$, where $g_{r}(X;q,t):=Q_{(r)}(X;q,t)$, $r=1,2,\dots$.
Later proofs appeared in \cite[Proposition 2.17]{BorodinCorwin2014} and  \cite[Section 5]{BorodinCorwinGorinShakirov2016}.
To enhance these operators to ones on $\Lambda$, we again have to consider their renormalized version. 
\begin{thm}[{\cite[Proposition 3.25]{FeiginHashizumeHoshinoShiraishiYanagida2009}}]
\label{thm:operator_G}
For  a fixed $r\in\mbb{Z}_{\ge 1}$ and $n\in\mbb{Z}_{\ge 1}$, we set
\begin{equation*}
	G^{(n)}_{r}:=\frac{(-1)^{r}t^{-nr}q^{\binom{r}{2}}}{(q;q)_{r}}\sum_{l=0}^{r}(-1)^{l}q^{-\binom{l}{2}}q^{-l(r-l)}(q^{-l+r-1};q)_{l}H^{(n)}_{l}.
\end{equation*}
Then the projective limit $G_{r}=G_{r}(q,t)=\varprojlim_{n}G^{(n)}_{r}$ exists and is diagonalized by the Macdonald symmetric functions so that
\begin{equation*}
	G_{r}P_{\lambda}(X)=g_{r}(q^{\lambda}t^{-\delta};q,t)P_{\lambda}(X),\ \ \lambda\in \mbb{Y}.
\end{equation*}
\end{thm}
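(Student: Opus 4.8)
The plan is to reduce the whole statement to a single assertion about eigenvalues and then settle that assertion by a telescoping generating-function computation. First I would use that the operators $H^{(n)}_{0},\dots,H^{(n)}_{r}$ are simultaneously diagonalized by the basis $\{P^{(n)}_{\lambda}\}$; consequently every linear combination of them is diagonal in the same basis, so $G^{(n)}_{r}P^{(n)}_{\lambda}=\gamma^{(n)}_{r}(\lambda)P^{(n)}_{\lambda}$, where $\gamma^{(n)}_{r}(\lambda)$ is obtained from the defining formula for $G^{(n)}_{r}$ by replacing each $H^{(n)}_{l}$ with its eigenvalue $g^{(n)}_{l}(q^{\lambda_{1}}t^{n-1},\dots,q^{\lambda_{n}})$ recalled above. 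Next I would note that, since the projection $\Lambda^{(n+1)}\to\Lambda^{(n)}$ sends $P^{(n+1)}_{\lambda}$ to $P^{(n)}_{\lambda}$ for $n\ge\ell(\lambda)$ (stability of the Macdonald polynomials), the family $\{G^{(n)}_{r}\}$ is compatible with the projections --- so that the limit $G_{r}=\varprojlim_{n}G^{(n)}_{r}$ exists --- if and only if $\gamma^{(n)}_{r}(\lambda)$ is independent of $n$ for $n\ge\ell(\lambda)$, in which case the common value is the eigenvalue of $G_{r}$. Thus the entire theorem reduces to showing $\gamma^{(n)}_{r}(\lambda)=g_{r}(q^{\lambda}t^{-\delta})$ for all $n\ge\ell(\lambda)$.

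To prove this I would pass to generating functions through the identity $\sum_{l\ge0}g_{l}(X)z^{l}=\prod_{i}\frac{(tx_{i}z;q)_{\infty}}{(x_{i}z;q)_{\infty}}$, which characterizes $g_{l}=Q_{(l)}$. Writing $y_{i}=q^{\lambda_{i}}t^{n-i}$ and $L=\ell(\lambda)$, the finite-variable eigenvalues assemble into
\begin{equation*}
	\sum_{l\ge0}g^{(n)}_{l}(y_{1},\dots,y_{n})z^{l}=\prod_{i=1}^{n}\frac{(ty_{i}z;q)_{\infty}}{(y_{i}z;q)_{\infty}}=\frac{(t^{n-L}z;q)_{\infty}}{(z;q)_{\infty}}\prod_{i=1}^{L}\frac{(q^{\lambda_{i}}t^{n-i+1}z;q)_{\infty}}{(q^{\lambda_{i}}t^{n-i}z;q)_{\infty}},
\end{equation*}
where the factors with $L<i\le n$ (for which $y_{i}=t^{n-i}$) telescope. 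The same telescoping applied to the stable specialization $x_{i}=q^{\lambda_{i}}t^{-i}$, whose tail over $i>L$ collapses geometrically exactly as in the defining formula for $p_{r}(q^{\lambda}t^{-\delta})$, gives
\begin{equation*}
	\sum_{r\ge0}g_{r}(q^{\lambda}t^{-\delta})u^{r}=(t^{-L}u;q)_{\infty}\prod_{i=1}^{L}\frac{(q^{\lambda_{i}}t^{1-i}u;q)_{\infty}}{(q^{\lambda_{i}}t^{-i}u;q)_{\infty}}.
\end{equation*}

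The two displayed products of $L$ factors become identical under the substitution $z=t^{-n}u$, so the two generating functions differ only by a single prefactor:
\begin{equation*}
	\sum_{r\ge0}g_{r}(q^{\lambda}t^{-\delta})u^{r}=(t^{-n}u;q)_{\infty}\sum_{l\ge0}g^{(n)}_{l}(y)\,(t^{-n}u)^{l}.
\end{equation*}
Expanding $(t^{-n}u;q)_{\infty}=\sum_{j\ge0}\frac{(-1)^{j}q^{\binom{j}{2}}}{(q;q)_{j}}(t^{-n}u)^{j}$ by Euler's identity and extracting the coefficient of $u^{r}$ expresses $g_{r}(q^{\lambda}t^{-\delta})$ as $t^{-nr}$ times an explicit $q$-binomial combination of the $g^{(n)}_{l}(y)$; since the left-hand side carries no $n$, this combination is automatically $n$-independent, which is exactly the stabilization required. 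It then remains to check that this Euler combination agrees coefficientwise with the renormalization constants defining $G^{(n)}_{r}$, i.e. that the accompanying powers of $q$ together with $(q^{-l+r-1};q)_{l}$ reproduce $q^{\binom{r-l}{2}}/(q;q)_{r-l}$; this is a finite manipulation of $q$-Pochhammer symbols.

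I expect the genuine obstacle to be precisely this last coefficient identification, together with the careful bookkeeping of the geometric tail. One must verify the $q$-Pochhammer identity relating $(q^{-l+r-1};q)_{l}$ and its prefactor to the Euler coefficients, and confirm that the telescoping of the $i>L$ factors reproduces the infinite specialization $q^{\lambda}t^{-\delta}$ exactly, with every geometric summation read as the formal rational-function identity already fixed by the convention defining $p_{r}(q^{\lambda}t^{-\delta})$. The diagonality and the reduction to $n$-independence are routine; all the analytic content sits in this telescoping-plus-Euler identity, which is what forces the particular shape of the renormalization constants in the definition of $G^{(n)}_{r}$.
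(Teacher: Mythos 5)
Your route is sound, and it is genuinely different from anything in the paper: the paper does not prove this theorem at all (it is quoted from \cite[Proposition 3.25]{FeiginHashizumeHoshinoShiraishiYanagida2009}), and the closest in-paper argument is Appendix \ref{app:proof_free_field_G_inverse}, which treats the inverted-parameter operators $\what{G}_{r}(q^{-1},t^{-1})$ by an induction on $r$ with contour integration, the shift Lemma \ref{lem:q-inv_shift} and the partial-fraction Lemma \ref{lem:partial_fraction} — not by your direct eigenvalue/generating-function telescoping. Your reduction of the theorem to the $n$-independence of the eigenvalues is correct (diagonality of the $H^{(n)}_{l}$ plus stability $P^{(n+1)}_{\lambda}\mapsto P^{(n)}_{\lambda}$ under the projections does give the equivalence you claim), and both telescoping computations check out: with $y_{i}=q^{\lambda_{i}}t^{n-i}$ one indeed obtains $\sum_{r\ge0}g_{r}(q^{\lambda}t^{-\delta})u^{r}=(t^{-n}u;q)_{\infty}\sum_{l\ge0}g^{(n)}_{l}(y)(t^{-n}u)^{l}$, the geometric tail being the rational-function identity $\frac{1-t^{m}}{1-t^{-m}}=-t^{m}$ applied inside the exponential, which is legitimate over $\mbb{F}=\mbb{C}(q,t)$.

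However, the one step you deferred is exactly where the argument, as you posed it, breaks: the coefficient identification \emph{fails for the constants as printed in the statement}. Extracting the coefficient of $u^{r}$ via Euler's identity gives $g_{r}(q^{\lambda}t^{-\delta})=t^{-nr}\sum_{l=0}^{r}\frac{(-1)^{r-l}q^{\binom{r-l}{2}}}{(q;q)_{r-l}}g^{(n)}_{l}(y)$, so matching against the definition of $G^{(n)}_{r}$ requires
\begin{equation*}
	\frac{(-1)^{r}q^{\binom{r}{2}}}{(q;q)_{r}}\,(-1)^{l}q^{-\binom{l}{2}-l(r-l)}\,(A;q)_{l}
	=\frac{(-1)^{r-l}q^{\binom{r-l}{2}}}{(q;q)_{r-l}},
\end{equation*}
and since $\binom{r}{2}-\binom{l}{2}-l(r-l)=\binom{r-l}{2}$, this holds if and only if $(A;q)_{l}=(q;q)_{r}/(q;q)_{r-l}$, i.e.\ $A=q^{r-l+1}$. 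The statement has $A=q^{r-l-1}$: already at $r=1$, $l=1$ this gives $1-q^{-1}$ where $1-q$ is required, and for $r\ge2$ the terms $l=r-1,r$ vanish outright (the Pochhammer contains the factor $1-q^{0}$), while the Euler coefficients do not. So your final check cannot succeed against the printed formula; what your method actually detects is a typo in the statement, $q^{-l+r-1}$ should read $q^{-l+r+1}$. This is corroborated by the paper itself: Theorem \ref{thm:free_field_G_general} at $\mu=0$ carries the coefficient $(q^{-r+l-1};q^{-1})_{l}$, which under $q\mapsto q^{-1}$ is precisely $(q^{r-l+1};q)_{l}$, consistent with your Euler expansion and inconsistent with the main-text constant. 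With that correction your proof is complete, self-contained, and more direct than both the cited argument and the appendix induction; without addressing it, the last step is not merely unfinished bookkeeping but a false identity.
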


\section{Free field realization}
\label{sect:free_field_realization}
In this section, we interpret the whole thing in Section \ref{sect:symm_funcs} in terms of the free field theory in two steps. In the former Subsection \ref{subsect:Fock_rep}, we identify the space of symmetric functions $\Lambda$ with a Fock representation in a standard manner \cite{Jing1994, AwataMatsuoOdakeShiraishi1995, AwataOdakeShiraishi1996, FeiginHashizumeHoshinoShiraishiYanagida2009,AwataFeiginHoshinoKanaiShiraishiYanagida2011}. The goal there is a proof of Theorem \ref{thm:correlation_operator}. The latter Subsection \ref{subsect:free_field_operators} is devoted to the realization of operators that are diagonalized by the Macdonald symmetric functions as operators on the Fock representation.

\subsection{Fock representation}
\label{subsect:Fock_rep}
Let $\mfrak{h}=\left(\bigoplus_{n\in\mbb{Z}\backslash\{0\}}\mbb{F}a_{n}\right)\oplus\mbb{F}c$ be a Heisenberg Lie algebra defined by
\begin{align}
\label{eq:deformed_Heisenberg}
	[a_{m},a_{n}]&=m\frac{1-q^{|m|}}{1-t^{|m|}}\delta_{m+n,0}c,\ \ m,n\in\mbb{Z}\backslash\{0\},& [c,\mfrak{h}]&=0.
\end{align}
We decompose the Heisenberg Lie algebra so that $\mfrak{h}=\mfrak{h}_{+}\oplus\mbb{F}c\oplus\mfrak{h}_{-}$, where $\mfrak{h}_{\pm}$ are Lie subalgebras generated by $\set{a_{\pm n}|n>0}$.
A one-dimensional representation $\mbb{F}\ket{0}$ of $\mfrak{h}_{\ge 0}:=\mfrak{h}_{+}\oplus\mbb{F}c$ is defined by the property that $a_{n}\ket{0}=0$, $n>0$ and $c\ket{0}=\ket{0}$.
The induced representation is the Fock representation of $\mfrak{h}$:
\begin{equation*}
	\mcal{F}:=U(\mfrak{h})\otimes_{U(\mfrak{h}_{\ge 0})}\mbb{F}\ket{0}\simeq U(\mfrak{h}_{-})\otimes_{\mbb{F}}\mbb{F}\ket{0}.
\end{equation*}
Here, for a Lie algebra $\mfrak{g}$, $U(\mfrak{g})$ is its universal enveloping algebra.
For a partition $\lambda=(\lambda_{1},\lambda_{2},\dots)\in\mbb{Y}$, we set $\ket{\lambda}:=a_{-\lambda_{1}}a_{-\lambda_{2}}\cdots\ket{0}$.
Then the Fock space has a basis $\{\ket{\lambda}:\lambda\in\mbb{Y}\}$.

The dual Fock space $\mcal{F}^{\dagger}$ is also constructed by induction.
Let $\mbb{F}\bra{0}$ be a one-dimensional right representation of $\mfrak{h}_{\le 0}=\mfrak{h}_{-}\oplus\mbb{F}c$ defined by $\bra{0}a_{-n}=0$, $n>0$ and $\bra{0}c=\bra{0}$.
Then the dual Fock space is obtained by
\begin{equation*}
	\mcal{F}^{\dagger}=\mbb{F}\bra{0}\otimes_{U(\mfrak{h}_{\le 0})}U(\mfrak{h})\simeq \mbb{F}\bra{0}\otimes_{\mbb{F}}U(\mfrak{h}_{+}).
\end{equation*}
For a partition $\lambda=(\lambda_{1},\lambda_{2},\dots )\in\mbb{Y}$, we set $\bra{\lambda}=\bra{0}a_{\lambda_{1}}a_{\lambda_{2}}\cdots$.
Then the collection $\{\bra{\lambda}:\lambda\in\mbb{Y}\}$ forms a basis of $\mcal{F}^{\dagger}$.

We define an $\mbb{F}$-bilinear paring $\braket{\cdot|\cdot}:\mcal{F}^{\dagger}\times \mcal{F}\to \mbb{F}$ by the properties $\braket{0|0}=1$ and
\begin{equation*}
	\braket{v|a_{n}\cdot|w}=\braket{v|\cdot a_{n}|w},\ \ \bra{v}\in\mcal{F}^{\dagger},\ \ \ket{w}\in\mcal{F},\ \ n\in\mbb{Z}\backslash\{0\}.
\end{equation*}
Then we have the following Propositions \ref{prop:symm_func_Fock} and \ref{prop:iota_vertex_expression}, which are well-known facts. See e.g. \cite[Section 3]{AwataOdakeShiraishi1996} for proofs.
\begin{prop}
\label{prop:symm_func_Fock}
The Fock space and the dual Fock space are isomorphic to the space of symmetric functions $\Lambda$ by the assignments
\begin{align*}
	\iota&:\mcal{F}\to\Lambda;\ \ \ket{\lambda}\mapsto p_{\lambda},&
	\iota^{\dagger}&:\mcal{F}^{\dagger}\to \Lambda;\ \ \bra{\lambda}\mapsto p_{\lambda}.
\end{align*}
Moreover, these assignments are compatible with the inner products so that the following diagram is commutative:
\begin{equation*}
\xymatrix{
	\mcal{F}^{\dagger}\otimes \mcal{F} \ar[rr]^{\iota^{\dagger}\otimes\iota} \ar[rd]_{\braket{\cdot|\cdot}}&& \Lambda \otimes \Lambda \ar[ld]^{\braket{\cdot,\cdot}_{q,t}}\\
	& \mbb{F} &.
}	
\end{equation*}
\end{prop}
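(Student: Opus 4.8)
The plan is to treat the two assertions separately: first the vector-space isomorphism, then the compatibility of the pairings.

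For the isomorphism, I would argue that it is immediate from the construction. By definition $\{\ket{\lambda}:\lambda\in\mbb{Y}\}$ is an $\mbb{F}$-basis of $\mcal{F}$, while $\{p_{\lambda}:\lambda\in\mbb{Y}\}$ is an $\mbb{F}$-basis of $\Lambda$; since both are indexed by the same set $\mbb{Y}$, the $\mbb{F}$-linear map $\iota$ sending $\ket{\lambda}$ to $p_{\lambda}$ carries a basis to a basis and is therefore a linear isomorphism. The same reasoning applies verbatim to $\iota^{\dagger}$, using the basis $\{\bra{\lambda}:\lambda\in\mbb{Y}\}$ of $\mcal{F}^{\dagger}$.

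For the compatibility, the diagram commutes precisely when $\braket{\lambda|\mu}=\braket{p_{\lambda},p_{\mu}}_{q,t}$ for all $\lambda,\mu\in\mbb{Y}$. Since the right-hand side equals $z_{\lambda}(q,t)\delta_{\lambda\mu}$ by definition, it suffices to compute the pairing $\braket{\lambda|\mu}=\braket{0|a_{\lambda_{1}}a_{\lambda_{2}}\cdots a_{-\mu_{1}}a_{-\mu_{2}}\cdots|0}$ and show it equals $z_{\lambda}(q,t)\delta_{\lambda\mu}$. I would evaluate this by the standard normal-ordering procedure: using the adjointness property defining $\braket{\cdot|\cdot}$ together with the commutation relations \eqref{eq:deformed_Heisenberg}, I move each annihilation operator $a_{n}$ ($n>0$) to the right past the creation operators down to the vacuum. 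Because $[a_{m},a_{n}]=0$ unless $m+n=0$, the only surviving contractions pair an $a_{n}$ with an $a_{-n}$; this forces the multiset of parts of $\lambda$ to coincide with that of $\mu$, i.e.\ $\lambda=\mu$, and the pairing vanishes otherwise.

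The cleanest way to organize the nonzero case is to use the fact that modes of distinct frequency commute, so that the computation factorizes over the distinct part-values $i\ge 1$. For a single frequency $i$ occurring with multiplicity $m_{i}=m_{i}(\lambda)$, the relevant quantity is $\braket{0|a_{i}^{m_{i}}a_{-i}^{m_{i}}|0}$, evaluated exactly as in a one-mode oscillator with $[a_{i},a_{-i}]=i\frac{1-q^{i}}{1-t^{i}}c$ and $c\ket{0}=\ket{0}$; this gives $m_{i}!\left(i\frac{1-q^{i}}{1-t^{i}}\right)^{m_{i}}$. Taking the product over all $i$ yields
\begin{equation*}
	\braket{\lambda|\lambda}=\prod_{i\ge 1}m_{i}(\lambda)!\,i^{m_{i}(\lambda)}\prod_{i\ge 1}\left(\frac{1-q^{i}}{1-t^{i}}\right)^{m_{i}(\lambda)}=z_{\lambda}\prod_{j=1}^{\ell(\lambda)}\frac{1-q^{\lambda_{j}}}{1-t^{\lambda_{j}}}=z_{\lambda}(q,t),
\end{equation*}
which is exactly $\braket{p_{\lambda},p_{\lambda}}_{q,t}$, completing the verification. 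The only point requiring care—and hence the main obstacle—is the bookkeeping in this expansion: one must check that the $m_{i}!$ distinct pairings of the operators $a_{i}$ with the operators $a_{-i}$ each contribute the same factor and that cross-frequency terms genuinely drop out, so that the combinatorial constant assembles into $z_{\lambda}$ rather than some other symmetry factor. This is routine, but it is where all the content lies; once the factorization over frequencies is justified, the single-mode identity $\braket{0|a^{m}(a^{\dagger})^{m}|0}=m!\,\alpha^{m}$ (valid whenever $[a,a^{\dagger}]=\alpha$ is central and $a\ket{0}=0$) finishes the argument.
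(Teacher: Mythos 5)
Your proof is correct, and it is essentially the standard argument: the paper itself gives no proof of Proposition \ref{prop:symm_func_Fock}, stating it as a well-known fact with a reference to \cite[Section 3]{AwataOdakeShiraishi1996}, where the same verification (basis-to-basis linear isomorphism, then the normal-ordering computation reducing $\braket{\lambda|\mu}$ to the single-mode identity $\braket{0|a_{i}^{m}a_{-i}^{m}|0}=m!\left(i\tfrac{1-q^{i}}{1-t^{i}}\right)^{m}$ and assembling $z_{\lambda}(q,t)\delta_{\lambda\mu}$) is carried out. Your factorization over frequencies is justified exactly as you indicate, since $[a_{m},a_{n}]=0$ unless $m+n=0$, so nothing further is needed.
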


\begin{prop}
\label{prop:iota_vertex_expression}
The mappings $\iota$ and $\iota^{\dagger}$ are equivalent to computation of the following matrix elements:
\begin{align*}
	\iota (\ket{v})&=\braket{0|\Gamma (X)_{+}|v},\ \ \ket{v}\in\mcal{F},&
	\iota^{\dagger} (\bra{v})&=\braket{v|\Gamma (X)_{-}|0},\ \ \bra{v}\in\mcal{F}^{\dagger},
\end{align*}
where $\Gamma (X)_{\pm}$ are defined in (\ref{eq:Gamma}).
\end{prop}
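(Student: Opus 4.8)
The plan is to establish both identities by $\mbb{F}$-linearity, so that it suffices to verify $\iota(\ket{\lambda})=\braket{0|\Gamma(X)_{+}|\lambda}$ and $\iota^{\dagger}(\bra{\lambda})=\braket{\lambda|\Gamma(X)_{-}|0}$ on the basis vectors $\ket{\lambda}$ and $\bra{\lambda}$. Since $\iota(\ket{\lambda})=p_{\lambda}(X)=p_{\lambda_{1}}(X)p_{\lambda_{2}}(X)\cdots$, the task becomes the evaluation of these matrix elements, which I would carry out by commuting $\Gamma(X)_{\pm}$ past the Heisenberg generators that build each basis vector.

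First I would record the one commutation relation that drives everything. Writing $\Gamma(X)_{+}=\exp(A_{+})$ with $A_{+}=\sum_{n>0}\frac{1-t^{n}}{1-q^{n}}\frac{p_{n}(X)}{n}a_{n}$, the defining relations \eqref{eq:deformed_Heisenberg} together with the fact that the central element $c$ acts as the identity on $\mcal{F}$ give, for each $m>0$,
\[
	[A_{+},a_{-m}]=\frac{1-t^{m}}{1-q^{m}}\frac{p_{m}(X)}{m}\cdot m\frac{1-q^{m}}{1-t^{m}}=p_{m}(X).
\]
The decisive feature is that this commutator is a scalar, hence central, so that the exponential identity $\Gamma(X)_{+}\,a_{-m}=(a_{-m}+p_{m}(X))\,\Gamma(X)_{+}$ holds exactly, with no higher Baker--Campbell--Hausdorff corrections.

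Next I would push $\Gamma(X)_{+}$ rightward through $\ket{\lambda}=a_{-\lambda_{1}}\cdots a_{-\lambda_{\ell}}\ket{0}$, acquiring a factor $(a_{-\lambda_{i}}+p_{\lambda_{i}}(X))$ at each of the $\ell$ steps; using $A_{+}\ket{0}=0$, hence $\Gamma(X)_{+}\ket{0}=\ket{0}$, this yields
\[
	\braket{0|\Gamma(X)_{+}|\lambda}=\braket{0|(a_{-\lambda_{1}}+p_{\lambda_{1}}(X))\cdots(a_{-\lambda_{\ell}}+p_{\lambda_{\ell}}(X))|0}.
\]
Expanding the product according to whether each factor contributes the operator $a_{-\lambda_{i}}$ or the scalar $p_{\lambda_{i}}(X)$, any term that keeps at least one $a_{-\lambda_{i}}$ is killed because the scalars pull out to leave a leftmost annihilation operator acting on $\bra{0}$, and $\bra{0}a_{-n}=0$ for $n>0$; only the fully contracted term $\prod_{i}p_{\lambda_{i}}(X)=p_{\lambda}(X)$ survives, which is exactly $\iota(\ket{\lambda})$. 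The identity for $\iota^{\dagger}$ follows from the mirror computation: with $A_{-}=\sum_{n>0}\frac{1-t^{n}}{1-q^{n}}\frac{p_{n}(X)}{n}a_{-n}$ one checks $[a_{m},A_{-}]=p_{m}(X)$ for $m>0$, commutes $\Gamma(X)_{-}$ to the left through the generators $a_{\lambda_{1}},\dots,a_{\lambda_{\ell}}$ until it reaches $\bra{0}\Gamma(X)_{-}=\bra{0}$, and discards every term retaining some $a_{\lambda_{i}}$ via $a_{n}\ket{0}=0$.

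I do not expect a genuine obstacle here: the whole argument rests on the centrality of the key commutator, which makes the normal ordering exact and turns each matrix element into a single fully contracted term. The only point demanding care is the bookkeeping that the normalization $\frac{1-t^{n}}{1-q^{n}}$ in $\Gamma(X)_{\pm}$ is tuned precisely against the Heisenberg structure constant $m\frac{1-q^{m}}{1-t^{m}}$ so that $[A_{\pm},a_{\mp m}]=p_{m}(X)$ with no residual factor; once this cancellation is confirmed, the remainder is routine expansion and the vacuum annihilation conditions.
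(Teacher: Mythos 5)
Your proof is correct and complete. The paper does not prove Proposition \ref{prop:iota_vertex_expression} itself but delegates it to \cite[Section 3]{AwataOdakeShiraishi1996}, and your argument — reducing by linearity to the basis vectors $\ket{\lambda}$, $\bra{\lambda}$, then using the exact commutation relation $\Gamma(X)_{+}a_{-m}=(a_{-m}+p_{m}(X))\Gamma(X)_{+}$, which holds without higher Baker--Campbell--Hausdorff corrections because $[A_{+},a_{-m}]=p_{m}(X)$ is central (you correctly use that $c$ acts as the identity on $\mcal{F}$ and that the normalization $\frac{1-t^{n}}{1-q^{n}}$ cancels the structure constant $m\frac{1-q^{m}}{1-t^{m}}$), followed by expansion and the vacuum conditions $\bra{0}a_{-n}=0$ and $a_{n}\ket{0}=0$ to isolate the fully contracted term $p_{\lambda}(X)$, with the mirror relation $[a_{m},A_{-}]=p_{m}(X)$ handling $\iota^{\dagger}$ — is precisely the standard computation the paper has in mind.
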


\begin{rem}
It can also be shown that
\begin{align*}
	\iota(a_{n}\ket{v})&=n\frac{1-q^{n}}{1-t^{n}}\frac{\del}{\del p_{n}}\iota(\ket{v}),\ \ \ket{v}\in\mcal{F},\ \ n>0,\\
	\iota^{\dagger}(\bra{v}a_{-n})&= n\frac{1-q^{n}}{1-t^{n}}\frac{\del}{\del p_{n}}\iota^{\dagger}(\bra{v}),\ \ \bra{v}\in\mcal{F}^{\dagger},\ \ n>0.
\end{align*}
\end{rem}

For a Macdonald symmetric function $P_{\lambda}\in\Lambda$, $\lambda\in\mbb{Y}$ and its dual $Q_{\lambda}\in\Lambda$,
we set
\begin{align*}
	\ket{P_{\lambda}}&:=\iota^{-1}(P_{\lambda})\in \mcal{F}, & \ket{Q_{\lambda}}&:=\iota^{-1}(Q_{\lambda})\in\mcal{F},\\
	\bra{P_{\lambda}}&:=(\iota^{\dagger})^{-1}(P_{\lambda})\in \mcal{F}^{\dagger}, & \bra{Q_{\lambda}}&:=(\iota^{\dagger})^{-1}(Q_{\lambda})\in\mcal{F}^{\dagger}.
\end{align*}
Then it follows from Proposition \ref{prop:symm_func_Fock} that $\braket{Q_{\lambda}|P_{\mu}}=\braket{P_{\lambda}|Q_{\mu}}=\delta_{\lambda,\mu}$.
These properties and Proposition \ref{prop:iota_vertex_expression} verify the following.

\begin{prop}
We have
\begin{align*}
	\bra{0}\Gamma(X)_{+}&=\sum_{\lambda\in\mbb{Y}}P_{\lambda}(X)\bra{Q_{\lambda}}, &
	\Gamma(X)_{-}\ket{0}&=\sum_{\lambda\in\mbb{Y}}P_{\lambda}(X)\ket{Q_{\lambda}}.
\end{align*}
\end{prop}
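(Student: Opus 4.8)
The plan is to read off both expansions directly from Propositions \ref{prop:symm_func_Fock} and \ref{prop:iota_vertex_expression}, using biorthogonality of the two Macdonald bases. The key observation is that, since $\braket{Q_{\lambda}|P_{\mu}}=\braket{P_{\lambda}|Q_{\mu}}=\delta_{\lambda,\mu}$, the families $\set{\ket{Q_{\lambda}}}_{\lambda\in\mbb{Y}}$ and $\set{\bra{P_{\mu}}}_{\mu\in\mbb{Y}}$ are dual bases of $\mcal{F}$ and $\mcal{F}^{\dagger}$ respectively, and likewise $\set{\bra{Q_{\lambda}}}_{\lambda\in\mbb{Y}}$ and $\set{\ket{P_{\mu}}}_{\mu\in\mbb{Y}}$ are dual bases. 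Hence any vector is recovered from its pairings against the appropriate Macdonald vectors, and it remains only to identify those pairings with the values $P_{\lambda}(X)$.

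For the second identity I would expand $\Gamma(X)_{-}\ket{0}\in\mcal{F}$ in the basis $\set{\ket{Q_{\lambda}}}$; because $\set{\bra{P_{\lambda}}}$ is the dual basis this gives $\Gamma(X)_{-}\ket{0}=\sum_{\lambda\in\mbb{Y}}\braket{P_{\lambda}|\Gamma(X)_{-}|0}\ket{Q_{\lambda}}$. The coefficient is then evaluated by Proposition \ref{prop:iota_vertex_expression} applied to $\bra{v}=\bra{P_{\lambda}}$: one has $\braket{P_{\lambda}|\Gamma(X)_{-}|0}=\iota^{\dagger}(\bra{P_{\lambda}})$, and since $\bra{P_{\lambda}}=(\iota^{\dagger})^{-1}(P_{\lambda})$ by definition this equals $P_{\lambda}(X)$, yielding the claimed formula.

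The first identity follows symmetrically: expanding $\bra{0}\Gamma(X)_{+}\in\mcal{F}^{\dagger}$ in the basis $\set{\bra{Q_{\lambda}}}$, whose dual basis is $\set{\ket{P_{\lambda}}}$, gives $\bra{0}\Gamma(X)_{+}=\sum_{\lambda\in\mbb{Y}}\braket{0|\Gamma(X)_{+}|P_{\lambda}}\bra{Q_{\lambda}}$, and Proposition \ref{prop:iota_vertex_expression} applied to $\ket{v}=\ket{P_{\lambda}}$ identifies the coefficient as $\braket{0|\Gamma(X)_{+}|P_{\lambda}}=\iota(\ket{P_{\lambda}})=P_{\lambda}(X)$.

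There is no genuine analytic obstacle here; the argument is purely formal and essentially an unwinding of the preceding two propositions. The only points demanding care are bookkeeping ones: one must pair $\Gamma(X)_{-}\ket{0}$ against $\bra{P_{\lambda}}$ rather than $\bra{Q_{\lambda}}$ in order to land on the $\ket{Q_{\lambda}}$-coefficient, so that the $P$- and $Q$-labels emerge exactly as in the statement; and one should note that the infinite sums make sense because $\Gamma(X)_{\pm}$ act degree by degree, so each graded component of either side receives only finitely many contributions and the interchange of summation with the pairing $\braket{\cdot|\cdot}$ is legitimate.
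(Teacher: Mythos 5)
Your proof is correct and is exactly the argument the paper intends: the paper states this proposition without a written proof, remarking only that it follows from the biorthogonality $\braket{Q_{\lambda}|P_{\mu}}=\braket{P_{\lambda}|Q_{\mu}}=\delta_{\lambda,\mu}$ together with Proposition \ref{prop:iota_vertex_expression}, which is precisely the expansion-plus-coefficient-identification you carry out. Your added remark on the graded (degree-by-degree) convergence of the infinite sums is a sensible bit of bookkeeping the paper leaves implicit.
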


\begin{rem}
We can see that the computation of $\braket{0|\Gamma(Y)_{+}\Gamma(X)_{-}|0}$ reproduces the Cauchy-type identity.
Indeed, on one hand, it reads
\begin{equation*}
	\braket{0|\Gamma(Y)_{+}\Gamma(X)_{-}|0}=\sum_{\lambda\in\mbb{Y}}P_{\lambda}(X;q,t)Q_{\lambda}(Y;q,t).
\end{equation*}
On the other hand, a standard computation relying on the Baker--Campbell--Hausdorff formula gives
\begin{align*}
	\Gamma(Y)_{+}\Gamma(X)_{-}
	&=\exp\left(\sum_{n>0}\frac{1-t^{n}}{1-q^{n}}\frac{p_{n}(X)p_{n}(Y)}{n}\right)\Gamma(X)_{-}\Gamma(Y)_{+}\\
	&=\Pi(X,Y;q,t)\Gamma(X)_{-}\Gamma(Y)_{+}. \notag
\end{align*}
This reproduces the Cauchy-type identity
\begin{equation*}
	\sum_{\lambda\in\mbb{Y}}P_{\lambda}(X;q,t)Q_{\lambda}(Y;q,t)=\braket{0|\Gamma(Y)_{+}\Gamma(X)_{-}|0}=\Pi(X,Y;q,t).
\end{equation*}
\end{rem}

The Macdonald symmetric functions for skew-partitions are also expressed as matrix elements.
\begin{prop}[{\cite[Theorem 3.7]{AwataOdakeShiraishi1996}}]
\label{prop:skew-Macdonald_matrix_elem}
Let $\lambda/\mu$ be a skew-partition.
Then the corresponding Macdonald symmetric function has the following expressions.
\begin{equation*}
	P_{\lambda/\mu}(X)=\braket{Q_{\mu}|\Gamma(X)_{+}|P_{\lambda}}=\braket{P_{\lambda}|\Gamma (X)_{-}|Q_{\mu}}.
\end{equation*}
Also, the dual Macdonald symmetric function is expressed as
\begin{equation*}
	Q_{\lambda/\mu}(X)=\braket{P_{\mu}|\Gamma (X)_{+}|Q_{\lambda}}=\braket{Q_{\lambda}|\Gamma (X)_{-}|P_{\mu}}.
\end{equation*}
\end{prop}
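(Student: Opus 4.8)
The plan is to reduce all four identities to a single mechanism: the factorization of the vertex operators $\Gamma(X)_{\pm}$ over a union of variable sets, combined with the two dual expansions of $\bra{0}\Gamma(Y)_{+}$ (and of $\Gamma(Y)_{-}\ket{0}$) in terms of the bases $\{\bra{Q_{\mu}}\}$ and $\{\bra{P_{\nu}}\}$. First I would record the factorization $\Gamma(X)_{\pm}\Gamma(Y)_{\pm}=\Gamma((X,Y))_{\pm}$, which holds because $p_{n}(X,Y)=p_{n}(X)+p_{n}(Y)$ and the generators $\{a_{n}:n>0\}$ (resp. $\{a_{-n}:n>0\}$) commute among themselves by (\ref{eq:deformed_Heisenberg}), so the two exponentials combine into a single exponential of the sum. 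Coupled with Proposition \ref{prop:iota_vertex_expression}, which gives $\braket{0|\Gamma(Z)_{+}|v}=\iota(\ket{v})$ read off in the variables $Z$ (and dually $\braket{v|\Gamma(Z)_{-}|0}=\iota^{\dagger}(\bra{v})$), this lets me evaluate $\braket{0|\Gamma((X,Y))_{+}|P_{\lambda}}=P_{\lambda}(X,Y)$ and the analogous expressions with $Q_{\lambda}$ and with $\Gamma_{-}$.

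Next I would establish the two expansions of the vertex operator. The one displayed in the preceding proposition gives $\bra{0}\Gamma(Y)_{+}=\sum_{\mu}P_{\mu}(Y)\bra{Q_{\mu}}$; the dual expansion $\bra{0}\Gamma(Y)_{+}=\sum_{\nu}Q_{\nu}(Y)\bra{P_{\nu}}$ follows by expanding in the basis $\{\bra{P_{\nu}}\}$ and evaluating the coefficient as $\braket{0|\Gamma(Y)_{+}|Q_{\nu}}=Q_{\nu}(Y)$, using $\braket{P_{\lambda}|Q_{\mu}}=\braket{Q_{\lambda}|P_{\mu}}=\delta_{\lambda,\mu}$. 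The corresponding statements for $\Gamma(Y)_{-}\ket{0}$ in both the $\{\ket{Q_{\mu}}\}$- and the $\{\ket{P_{\nu}}\}$-bases hold by the same argument.

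For each identity I then sandwich $\Gamma(Y)$ against $\Gamma(X)$, insert the appropriate expansion on one side, and use the factorization together with the skew definitions $P_{\lambda}(X,Y)=\sum_{\mu}P_{\lambda/\mu}(X)P_{\mu}(Y)$ and $Q_{\lambda}(X,Y)=\sum_{\mu}Q_{\lambda/\mu}(X)Q_{\mu}(Y)$ on the other. For the first equality, $\braket{0|\Gamma(Y)_{+}\Gamma(X)_{+}|P_{\lambda}}$ equals $\sum_{\mu}P_{\mu}(Y)\braket{Q_{\mu}|\Gamma(X)_{+}|P_{\lambda}}$ via the $\{\bra{Q_{\mu}}\}$-expansion, while it equals $P_{\lambda}(X,Y)=\sum_{\mu}P_{\lambda/\mu}(X)P_{\mu}(Y)$ via the factorization; regarding $X$ and $Y$ as independent families and using the linear independence of $\{P_{\mu}(Y)\}_{\mu}$ in $\Lambda_{Y}$, comparison of the coefficients of $P_{\mu}(Y)$ yields $\braket{Q_{\mu}|\Gamma(X)_{+}|P_{\lambda}}=P_{\lambda/\mu}(X)$. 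The $\Gamma(X)_{-}$ version comes from pairing $\bra{P_{\lambda}}$ with $\Gamma(X)_{-}\Gamma(Y)_{-}\ket{0}$ and $\iota^{\dagger}$, and the two dual identities for $Q_{\lambda/\mu}$ come from instead using the $\{\bra{P_{\nu}}\}$- and $\{\ket{P_{\nu}}\}$-expansions together with the $Q$-coproduct, comparing coefficients of $Q_{\mu}(Y)$.

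The point requiring the most care is bookkeeping: for each of the four cases I must select the correct one of the two dual expansions ($\{Q_{\mu}\}$- versus $\{P_{\nu}\}$-basis) so that the resulting comultiplication matches the skew definition ($P$- versus $Q$-type), and then justify the coefficient comparison. The latter rests on treating $X$ and $Y$ as genuinely independent variable families, so that the identity lives in $\Lambda_{X}\otimes\Lambda_{Y}$ where $\{P_{\mu}(Y)\}_{\mu}$ (or $\{Q_{\mu}(Y)\}_{\mu}$) is a basis; the factorization step is precisely what keeps the whole manipulation inside this formal algebraic setting, so that no analytic or convergence input is ever needed.
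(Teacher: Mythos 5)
Your proof is correct and complete. Note that the paper itself offers no argument for this proposition: it is imported from \cite[Theorem 3.7]{AwataOdakeShiraishi1996}, with only the remark that the reference states the $Q_{\lambda/\mu}$ case and that the $P_{\lambda/\mu}$ case ``holds by the same reasoning.'' Your mechanism---the factorization $\Gamma(X)_{\pm}\Gamma(Y)_{\pm}=\Gamma((X,Y))_{\pm}$ (valid because $p_{n}(X,Y)=p_{n}(X)+p_{n}(Y)$ and same-sign modes commute by (\ref{eq:deformed_Heisenberg})), the two dual Cauchy-type expansions of $\bra{0}\Gamma(Y)_{+}$ and $\Gamma(Y)_{-}\ket{0}$, and coefficient comparison against the bases $\{P_{\mu}(Y)\}$, $\{Q_{\mu}(Y)\}$ of $\Lambda_{Y}$---is precisely the standard vertex-operator derivation that underlies the citation, and it has the merit of being self-contained within the paper's own toolkit: everything you invoke is already established there (Propositions \ref{prop:symm_func_Fock} and \ref{prop:iota_vertex_expression}, the biorthogonality $\braket{Q_{\lambda}|P_{\mu}}=\braket{P_{\lambda}|Q_{\mu}}=\delta_{\lambda,\mu}$, and the expansion $\bra{0}\Gamma(X)_{+}=\sum_{\lambda}P_{\lambda}(X)\bra{Q_{\lambda}}$). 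In particular your derivation of the dual expansions $\bra{0}\Gamma(Y)_{+}=\sum_{\nu}Q_{\nu}(Y)\bra{P_{\nu}}$ and $\Gamma(Y)_{-}\ket{0}=\sum_{\nu}Q_{\nu}(Y)\ket{P_{\nu}}$ via $\braket{0|\Gamma(Y)_{+}|Q_{\nu}}=Q_{\nu}(Y)$ is what makes all four identities, including the $P$-type ones not stated in \cite{AwataOdakeShiraishi1996}, come out uniformly. One point you could make fully explicit rather than gesture at: for fixed $\lambda$ the matrix element $\braket{Q_{\mu}|\Gamma(X)_{+}|P_{\lambda}}$ vanishes unless $|\mu|\le|\lambda|$, since $\Gamma(X)_{+}$ only lowers degree in $\mcal{F}$; hence each expansion reduces to a finite sum in every graded component of $\Lambda_{X}\otimes\Lambda_{Y}$, which is what licenses the coefficient comparison with no convergence issues---consistent with your closing remark that the whole manipulation stays formal.
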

Precisely speaking, \cite[Theorem 3.7]{AwataOdakeShiraishi1996} is a statement only about $Q_{\lambda/\mu}$, but the assertion for $P_{\lambda/\mu}$ also holds by the same reasoning.

We are in position to prove Theorem \ref{thm:correlation_operator}.
\begin{proof}[Proof of Theorem \ref{thm:correlation_operator}]
We first compute the following matrix element:
\begin{align*}
	\braket{P_{\mu}|\psi_{f}^{X,Y}|Q_{\nu}}
	&=\braket{P_{\mu}|\Gamma (Y)_{+}\mcal{O}(f)\Gamma (X)_{-}|Q_{\nu}}=\sum_{\lambda\in\mbb{Y}}f(\lambda)P_{\lambda/\nu}(X)Q_{\lambda/\mu}(Y).	
\end{align*}
Here we inserted the identity operator $\mrm{Id}_{\mcal{F}}=\sum_{\lambda\in\mbb{Y}}\ket{Q_{\lambda}}\bra{P_{\lambda}}$ into the left and right of the operator $\mcal{O}(f)$ and used Proposition \ref{prop:skew-Macdonald_matrix_elem}.
Insertion of the identity operator also gives us
\begin{align*}
	&\braket{0|\psi_{f_{N}}^{X^{(N)},Y^{(N)}}\cdots \psi_{f_{1}}^{X^{(1)},Y^{(1)}}|0} \\
	&=\sum_{\nu^{(1)},\dots, \nu^{(N-1)}\in\mbb{Y}}\braket{0|\psi_{f_{N}}^{X^{(N)},Y^{(N)}}|Q_{\nu^{(N-1)}}} \braket{P_{\nu^{(N-1)}}|\psi_{f_{N-1}}^{X^{(N-1)},Y^{(N-1)}}|Q_{\nu^{(N-2)}}}\\
	&\hspace{80pt}\times \cdots \times \braket{P_{\nu^{(1)}}|\psi_{f_{1}}^{X^{(1)},Y^{(1)}}|0} \\
	&= \sum_{\nu^{(1)},\dots, \nu^{(N-1)}\in\mbb{Y}}\sum_{\lambda^{(1)},\dots,\lambda^{(N)}\in\mbb{Y}}
		f_{N}(\lambda^{(N)})P_{\lambda^{(N)}/\nu^{(N-1)}}(X^{(N)})Q_{\lambda^{(N)}/\emptyset}(Y^{(N)})\\
	&\hspace{100pt}\times f_{N-1}(\lambda^{(N-1)})P_{\lambda^{(N-1)}/\nu^{(N-2)}}(X^{(N-1)})Q_{\lambda^{(N-1)}/\nu^{(N-1)}}(Y^{(N-1)})\\
	&\hspace{120pt}\cdots\cdots \\
	&\hspace{100pt}\times f_{1}(\lambda^{(1)})P_{\lambda^{(1)}/\emptyset} (X^{(1)})Q_{\lambda^{(1)}/\nu^{(1)}}(Y^{(1)}) \\
	&=\sum_{\lambda^{(1)},\dots,\lambda^{(N)}\in\mbb{Y}}f_{1}(\lambda^{(1)})\cdots f_{N}(\lambda^{(N)})P_{\lambda^{(1)}}(X^{(1)})\Psi_{\lambda^{(1)},\lambda^{(2)}}(Y^{(1)},X^{(2)}) \\
	&\hspace{100pt}\times \cdots \times \Psi_{\lambda^{(N-1)},\lambda^{(N)}}(Y^{(N-1)},X^{(N)})Q_{\lambda^{(N)}}(Y^{(N)}).
\end{align*}
Here we used the definition of the transition function (\ref{eq:transition_function}).
In the case when $f_{1},\dots, f_{N}=1$,
the above result exactly gives the normalization factor:
\begin{equation*}
	\braket{0|\psi_{1}^{X^{(N)},Y^{(N)}}\cdots \psi_{1}^{X^{(1)},Y^{(1)}}|0}=\prod_{1\le i\le j\le N}\Pi (X^{(i)},Y^{(j)}).
\end{equation*}
Therefore, we have
\begin{align*}
	&\frac{\braket{0|\psi_{f_{N}}^{X^{(N)},Y^{(N)}}\cdots \psi_{f_{1}}^{X^{(1)},Y^{(1)}}|0}} {\braket{0|\psi_{1}^{X^{(N)},Y^{(N)}}\cdots \psi_{1}^{X^{(1)},Y^{(1)}}|0}}=\sum_{\lambda^{(1)},\dots,\lambda^{(N)}\in\mbb{Y}}f_{1}(\lambda^{(1)})\cdots f_{N}(\lambda^{(N)})\mbb{MP}^{N}_{q,t}(\lambda^{(1)},\dots,\lambda^{(N)}),
\end{align*}
which is just the correlation function $\mbb{E}_{q,t}^{N}[f_{1}[1]\cdots f_{N}[N]]$.
\end{proof}

\subsection{Free field realization of operators}
\label{subsect:free_field_operators}
First, let us fix our terminologies.
\begin{defn}
Let $T\in\mrm{End}(\Lambda)$ be an operator on $\Lambda$.
We call the operator
\begin{equation*}
	\what{T}:=\iota^{-1} T \iota\in\mrm{End}(\mcal{F})
\end{equation*}
the free field realization of $T$.
\end{defn}

\begin{defn}
\label{defn:vertex_operator}
Let $z$ be a formal variable.
An $\mrm{End}(\mcal{F})$-valued formal power series $V(z)$ of the following form is called a vertex operator:
\begin{align*}
	V(z)&=V(z)_{-}V(z)_{+}\in\mrm{End}(\mcal{F})[[z,z^{-1}]],\\
	V(z)_{+}&=\exp\left(\sum_{n>0}\gamma_{n}a_{n}z^{-n}\right),\ \ \gamma_{n}\in\mbb{F},\ \ n>0,\\
	V(z)_{-}&=\exp\left(\sum_{n>0}\gamma_{-n}a_{-n}z^{n}\right),\ \ \gamma_{-n}\in\mbb{F},\ \ n>0
\end{align*}
\end{defn}
\begin{rem}
A vertex operator does not converge in $U(\mfrak{h})[[z,z^{-1}]]$,
but makes sense in $\mrm{End}(\mcal{F})[[z,z^{-1}]]$.
\end{rem}

\begin{defn}
\label{defn:normal_order_vertex_operator}
Let $V_{i}(z_{i})\in\mrm{End}(\mcal{F})[[z_{i},z_{i}^{-1}]]$, $i=1,2,\dots, r$ be vertex operators.
Their normally-ordered product $\no{V_{1}(z_{1})\cdots V_{r}(z_{r})}\in\mrm{End}(\mcal{F})[[z_{i},z_{i}^{-1}|i=1,\dots,r]]$ is defined as
\begin{equation*}
	\no{V_{1}(z_{1})\cdots V_{r}(z_{r})}
	:=V_{1}(z_{1})_{-}\cdots V_{r}(z_{r})_{-}V_{1}(z_{1})_{+}\cdots V_{r}(z_{r})_{+}.
\end{equation*}
\end{defn}

In this subsection, we see the free field realization of operators introduced in Section \ref{sect:symm_funcs}
that are diagonalized by the Macdonald symmetric functions.
We begin with the Macdonald operators $E_{r}$, $r=1,2,,\dots$.
Notice that the vertex operator $\eta(z)$ in (\ref{eq:vertex_eta}) gives an example of Definition \ref{defn:vertex_operator}, hence, in particular, the normally ordered product is defined for it.

\begin{thm}[{\cite[Theorem 9.2]{Shiraishi2006}, \cite[Proposition 3.6]{FeiginHashizumeHoshinoShiraishiYanagida2009}}]
\label{thm:Macdonald_operators_free_field}
Let $r=1,2,\dots$.
The following operator gives the free field realization of $E_{r}$:
\begin{equation*}
	\what{E}_{r}=\frac{t^{-r(r+1)/2}}{(t^{-1};t^{-1})_{r}}\int \left(\prod_{i=1}^{r}\frac{dz_{i}}{2\pi\sqrt{-1}z_{i}}\right)\left(\prod_{1\le i<j\le r}\frac{1-z_{j}/z_{i}}{1-t^{-1}z_{j}/z_{i}}\right)\no{\eta(z_{1})\cdots \eta(z_{r})}.
\end{equation*}
In the case of $r=1$, the product part $\prod_{1\le i<j\le r}\frac{1-z_{j}/z_{i}}{1-t^{-1}z_{j}/z_{i}}$ is understood as unity.
Here the linear functional $\int\left(\prod_{i=1}^{r}\frac{dz_{i}}{2\pi\sqrt{-1}z_{i}}\right)$ takes the constant term.
\end{thm}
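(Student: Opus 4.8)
The plan is to transport the proposed operator from $\mcal{F}$ to $\Lambda$ through the isomorphism $\iota$ and to identify it, in each finite-variable reduction, with the difference operator $E^{(n)}_{r}$; passing to the projective limit then yields $E_{r}=\varprojlim_{n}E^{(n)}_{r}$ and hence $\what{E}_{r}=\iota^{-1}E_{r}\iota$. First I would make the action of $\eta(z)$ on $\Lambda$ explicit. By the Remark following Proposition \ref{prop:iota_vertex_expression}, $a_{-n}$ acts as multiplication by $p_{n}$ and $a_{n}$ ($n>0$) as $n\frac{1-q^{n}}{1-t^{n}}\frac{\partial}{\partial p_{n}}$. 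Therefore $\iota\,\eta(z)_{-}\,\iota^{-1}$ is multiplication by $\exp\big(\sum_{n>0}\frac{1-t^{-n}}{n}p_{n}z^{n}\big)=\prod_{k}\frac{1-t^{-1}x_{k}z}{1-x_{k}z}$, while $\iota\,\eta(z)_{+}\,\iota^{-1}$ is the shift $p_{n}\mapsto p_{n}-(1-q^{n})z^{-n}$.

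Consequently $\iota\,\no{\eta(z_{1})\cdots\eta(z_{r})}\,\iota^{-1}$ first multiplies by $\prod_{i=1}^{r}\prod_{k}\frac{1-t^{-1}x_{k}z_{i}}{1-x_{k}z_{i}}$ and then performs the simultaneous shift $p_{n}\mapsto p_{n}-\sum_{i=1}^{r}(1-q^{n})z_{i}^{-n}$. Since $-(1-q^{n})z_{i}^{-n}=-(z_{i}^{-1})^{n}+(qz_{i}^{-1})^{n}$, this shift is exactly the substitution that removes the virtual variable $z_{i}^{-1}$ and inserts $qz_{i}^{-1}$, i.e. the generating-function form of the $q$-shifts underlying the Macdonald operator.

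Next I would restrict to $n$ variables and evaluate the constant-term integral as a multi-residue. With the prescribed expansion convention, the poles selected are those of the multiplication kernel at $z_{i}=x_{k}^{-1}$; localizing $z_{i}$ there converts the substitution ``remove $z_{i}^{-1}$, insert $qz_{i}^{-1}$'' into the genuine shift $T_{q,x_{k}}$, and the residue of the remaining factors yields $(1-t^{-1})\prod_{k'\neq k}\frac{x_{k}-t^{-1}x_{k'}}{x_{k}-x_{k'}}$. Choosing $r$ distinct poles $x_{k_{1}}^{-1},\dots,x_{k_{r}}^{-1}$ with $I=\{k_{1},\dots,k_{r}\}$, the factors with $k'\notin I$ reproduce $t^{-1}\cdot\frac{tx_{k}-x_{k'}}{x_{k}-x_{k'}}$, i.e. the coefficient $\prod_{i\in I,\,j\notin I}\frac{tx_{i}-x_{j}}{x_{i}-x_{j}}$ of $D^{(n)}_{r}$; the factors with $k'\in I$, together with the kernel $\prod_{i<j}\frac{1-z_{j}/z_{i}}{1-t^{-1}z_{j}/z_{i}}$, must be summed over the $r!$ assignments of the contour variables to the chosen poles. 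The key identity here is
\begin{equation*}
	\sum_{\sigma\in\mfrak{S}_{r}}\prod_{i<j}\frac{x_{\sigma(i)}-t^{-1}x_{\sigma(j)}}{x_{\sigma(i)}-x_{\sigma(j)}}=\frac{(t^{-1};t^{-1})_{r}}{(1-t^{-1})^{r}},
\end{equation*}
whose constant value cancels the $r$ residue factors $(1-t^{-1})$ against the normalization $\tfrac{1}{(t^{-1};t^{-1})_{r}}$, while the accumulated powers of $t$ reduce the prefactor $t^{-r(r+1)/2}$ to the coefficient $t^{-nr}t^{r(r-1)/2}$ of the $D^{(n)}_{r}$-summand in $E^{(n)}_{r}$.

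Finally I would account for the lower terms: configurations in which fewer than $r$ distinct poles $x_{k}^{-1}$ are selected---where the interaction factors $\frac{1}{1-t^{-1}z_{j}/z_{i}}$ force contour variables to coalesce into ``strings''---produce exactly the operators $D^{(n)}_{k}$ with $k<r$, carrying the coefficients $\frac{t^{-nr-\binom{r-k+1}{2}}}{(t^{-1};t^{-1})_{r-k}}$; summing over all configurations assembles the integral into $E^{(n)}_{r}=\sum_{k=0}^{r}\frac{t^{-nr-\binom{r-k+1}{2}}}{(t^{-1};t^{-1})_{r-k}}D^{(n)}_{k}$, and the limit $n\to\infty$ finishes the proof. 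The main obstacle is precisely this residue bookkeeping: one must show, respecting the convention $\frac{1}{z-\gamma w}\neq-\frac{1}{\gamma w-z}$, that only the asserted poles contribute, that the symmetrization over pole assignments produces the constants above, and that the string combinatorics from the coalescing variables generates the lower difference operators with the exact factors $(t^{-1};t^{-1})_{r-k}$ and $\binom{r-k+1}{2}$. Everything else is the routine translation carried out in the first two paragraphs.
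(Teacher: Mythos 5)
First, a point of reference: the paper does not prove this theorem at all --- it is quoted from \cite[Theorem 9.2]{Shiraishi2006} and \cite[Proposition 3.6]{FeiginHashizumeHoshinoShiraishiYanagida2009} --- so the closest internal model for your strategy is Appendix \ref{app:proof_free_field_G_inverse}, where the analogous statement for $G_{r}(q^{-1},t^{-1})$ (Theorem \ref{thm:free_field_G_inverse}) is proved by exactly the reduction you propose: intertwine with $\Gamma_{n}(X)_{+}$, extract constant terms one variable at a time, and induct on $r$. Your first two paragraphs (transport to $\Lambda$, the multiplication/plethystic-shift action of $\eta(z)_{\mp}$, the localization of the shift to $T_{q,x_{k}}$ at $z=x_{k}^{-1}$, the residue factor $(1-t^{-1})\prod_{k'\neq k}\frac{x_{k}-t^{-1}x_{k'}}{x_{k}-x_{k'}}$, and the symmetrization identity, which is the $t\to t^{-1}$ case of Lemma \ref{lem:key_symmetrization}) are all correct. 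But the proposal has a genuine gap, and you name it yourself: the entire content of the theorem beyond routine translation is the residue bookkeeping producing $E^{(n)}_{r}=\sum_{k=0}^{r}\frac{t^{-nr-\binom{r-k+1}{2}}}{(t^{-1};t^{-1})_{r-k}}D^{(n)}_{k}$, and you defer it as an ``obstacle'' rather than carry it out. A plan plus an acknowledged obstacle is not a proof.

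Worse, the mechanism you sketch for the lower-order terms is wrong, so executing the plan as written would fail. Under the paper's expansion convention, the ``string'' poles at $z_{j}=tz_{i}$ do \emph{not} contribute: once $z_{i}$ is localized at $x_{k}^{-1}$, the kernel factor becomes $\frac{1-z_{j}x_{k}}{1-t^{-1}z_{j}x_{k}}$, whose pole at $z_{j}=tx_{k}^{-1}$ is cancelled by the zero of the multiplication numerator $1-t^{-1}x_{k}z_{j}$ at that same point (and its zero at $z_{j}=x_{k}^{-1}$ is what forbids choosing the same pole twice, yielding the distinct subsets $I$ in $D^{(n)}_{r}$). The operators $D^{(n)}_{k}$ with $k<r$ arise instead from the residues at $z_{i}=\infty$, where the shift trivializes and each such variable contributes $t^{-n}$ times $t$-powers from the kernel limits; this is precisely the term $t^{n}q^{|\mu|}\mfrak{G}^{(\mu)}_{r-1}(q^{-1},t^{-1})$ in the appendix's recursion, which generates all lower-order operators through the induction on $r$. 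One can check this concretely at $r=2$: the piece with one variable at a pole and one at $\infty$ gives $(1-t^{-1})t^{-n}(1+t)$ per pole, which after the prefactor is exactly the coefficient $\frac{t^{-2n-1}}{1-t^{-1}}$ of $D^{(n)}_{1}$, while both variables at $\infty$ give the $D^{(n)}_{0}$-coefficient $\frac{t^{-2n-3}}{(t^{-1};t^{-1})_{2}}$; no string residue survives. So you should replace the ``string combinatorics'' by an induction in $r$ modeled on Appendix \ref{app:proof_free_field_G_inverse} (integrate out one variable; fresh poles $x_{k}^{-1}$ give the $q$-shift recursion, the residue at $\infty$ gives the $r\to r-1$ term), which also spares you the global sum over $r!$ pole assignments.
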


On the same space $\mcal{F}$, the Macdonald operators $E_{r}(q^{-1},t^{-1})$, $r=1,2,\dots$ with inverted parameters are also realized.
To this end, we introduce another vertex operator:
\begin{equation*}
	\xi(z)=\exp\left(-\sum_{n>0}\frac{1-t^{-n}}{n}(t/q)^{n/2}a_{-n}z^{n}\right)\exp\left(\sum_{n>0}\frac{1-t^{n}}{n}(t/q)^{n/2}a_{n}z^{-n}\right).
\end{equation*}
Notice that, though we see the contribution from $(q/t)^{1/2}$, each coefficient of $z^{n}$, $n\in\mbb{Z}$ makes sense over the base field $\mbb{F}$.

\begin{thm}[{\cite[Section III. F]{FeiginHashizumeHoshinoShiraishiYanagida2009}}]
For $r=1,2,\dots$, the operator
\begin{equation*}
	\what{E}_{r}(q^{-1},t^{-1})=\frac{t^{r(r+1)/2}}{(t;t)_{r}}\int\left(\prod_{i=1}^{r}\frac{dz_{i}}{2\pi\sqrt{-1}z_{i}}\right)\left(\prod_{1\le i<j\le r}\frac{1-z_{j}/z_{i}}{1-tz_{j}/z_{i}}\right)\no{\xi(z_{1})\cdots \xi(z_{r})}
\end{equation*}
gives the free field realization of $E_{r}(q^{-1},t^{-1})$.
\end{thm}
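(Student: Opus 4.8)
The plan is to deduce the statement from Theorem~\ref{thm:Macdonald_operators_free_field} by the formal substitution $q\mapsto q^{-1}$, $t\mapsto t^{-1}$. In that theorem nothing depends on $q$ explicitly except through the commutation relation (\ref{eq:deformed_Heisenberg}); the scalar prefactor and the rational factor involve only $t$, and the vertex operator $\eta(z)$ involves only $t$ and the modes $a_{n}$. Hence the only substantive work is to realize the parameter-inverted Heisenberg algebra on the \emph{same} Fock space $\mcal{F}$ and to check that, under this realization, the $\eta$-type vertex operator becomes $\xi(z)$.

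First I would record how (\ref{eq:deformed_Heisenberg}) transforms. From
\begin{equation*}
	\frac{1-q^{-|m|}}{1-t^{-|m|}}=\left(\frac{t}{q}\right)^{|m|}\frac{1-q^{|m|}}{1-t^{|m|}},
\end{equation*}
the inverted relations are reproduced on $\mcal{F}$ by the rescaling $a_{n}\mapsto a'_{n}:=t^{n}(t/q)^{|n|/2}a_{n}$, $n\in\mbb{Z}\backslash\{0\}$. Indeed, writing $\alpha_{n}:=t^{n}(t/q)^{|n|/2}$ one has $\alpha_{m}\alpha_{-m}=(t/q)^{|m|}$, so that $[a'_{m},a'_{n}]=m\frac{1-q^{-|m|}}{1-t^{-|m|}}\delta_{m+n,0}c$. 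Since $\alpha_{n}\neq 0$ and $a'_{n}\ket{0}=0$ for $n>0$, the triple $(\mcal{F},\{a'_{n}\},\ket{0})$ is a Fock representation of the parameter-inverted Heisenberg algebra, with the monomials $a'_{-\lambda_{1}}a'_{-\lambda_{2}}\cdots\ket{0}$ forming a basis.

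Next I would apply Theorem~\ref{thm:Macdonald_operators_free_field} verbatim in this inverted world. Inverting $t$ turns the prefactor $\frac{t^{-r(r+1)/2}}{(t^{-1};t^{-1})_{r}}$ into $\frac{t^{r(r+1)/2}}{(t;t)_{r}}$ and the rational factor $\prod_{i<j}\frac{1-z_{j}/z_{i}}{1-t^{-1}z_{j}/z_{i}}$ into $\prod_{i<j}\frac{1-z_{j}/z_{i}}{1-tz_{j}/z_{i}}$, matching the claimed formula, and replaces $\eta(z)$ by the $\eta$-type vertex operator built from $a'_{n}$ with $t$ inverted. The crux is then the elementary check that this equals $\xi(z)$: substituting $a'_{\pm n}=t^{\pm n}(t/q)^{n/2}a_{\pm n}$ and using $(1-t^{n})t^{-n}=-(1-t^{-n})$ and $(1-t^{-n})t^{n}=-(1-t^{n})$ turns its two exponentials into precisely those defining $\xi(z)$. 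The factors $(t/q)^{1/2}$ entering the rescaling are exactly those flagged after the definition of $\xi(z)$, and they recombine so that every coefficient of $z^{n}$ lies in $\mbb{F}$. Because normal ordering is insensitive to scalar rescaling of the modes, the integrand is literally unchanged.

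It remains to reconcile the two conventions of free field realization. The computation above yields $(\iota')^{-1}E_{r}(q^{-1},t^{-1})\iota'$, where $\iota'$ sends $a'$-monomials to power sums, whereas the statement concerns $\what{E}_{r}(q^{-1},t^{-1})=\iota^{-1}E_{r}(q^{-1},t^{-1})\iota$. Since $a'_{-\lambda_{1}}\cdots\ket{0}=\left(t^{-1}(t/q)^{1/2}\right)^{|\lambda|}\ket{\lambda}$ depends on $\lambda$ only through $|\lambda|$, one has $\iota'=\iota\circ D^{-1}$ with $D$ the degree-scaling operator acting on the degree-$k$ subspace of $\mcal{F}$ by $\left(t^{-1}(t/q)^{1/2}\right)^{k}$, whence $(\iota')^{-1}E_{r}(q^{-1},t^{-1})\iota'=D\,\what{E}_{r}(q^{-1},t^{-1})\,D^{-1}$. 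As $E_{r}(q^{-1},t^{-1})$ is diagonal in the homogeneous basis $\{P_{\lambda}(\cdot\,;q^{-1},t^{-1})\}$ it preserves the grading, so $\what{E}_{r}(q^{-1},t^{-1})$ commutes with $D$ and the two realizations coincide. I expect this reconciliation---and, hand in hand with it, pinning down the rescaling $\alpha_{n}$ so that it simultaneously realizes the inverted algebra and carries $\eta$ onto $\xi$---to be the only genuinely delicate point; the remainder is direct substitution into Theorem~\ref{thm:Macdonald_operators_free_field}.
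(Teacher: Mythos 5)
Your proposal is correct, but it takes a different route from the paper, which in fact offers no proof of this statement at all: it is quoted from \cite[Section III.~F]{FeiginHashizumeHoshinoShiraishiYanagida2009}. I checked the three pillars of your argument and they hold. The rescaling $a_{n}\mapsto a'_{n}=t^{n}(t/q)^{|n|/2}a_{n}$ does satisfy $\alpha_{m}\alpha_{-m}=(t/q)^{|m|}$, and since $\frac{1-q^{-|m|}}{1-t^{-|m|}}=(t/q)^{|m|}\frac{1-q^{|m|}}{1-t^{|m|}}$ this realizes the inverted Heisenberg relations on the same $\mcal{F}$; the substitution together with $(1-t^{\pm n})t^{\mp n}=-(1-t^{\mp n})$ indeed turns the $t$-inverted $\eta$-type operator built from $a'_{n}$ into precisely $\xi(z)$, with the annihilation/creation split preserved so that normal ordering is unaffected; and the reconciliation of $\iota'$ with $\iota$ via the degree-scaling operator $D$ is sound, since $a'_{-\lambda_{1}}a'_{-\lambda_{2}}\cdots\ket{0}=\bigl(t^{-1}(t/q)^{1/2}\bigr)^{|\lambda|}\ket{\lambda}$ depends only on $|\lambda|$ and $\what{E}_{r}(q^{-1},t^{-1})$ preserves the grading (the eigenfunctions $P_{\lambda}(\,\cdot\,;q^{-1},t^{-1})$ are homogeneous), so the conjugation by $D$ is trivial. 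One point worth making explicit, which you only gesture at: the rescaling requires extending scalars by $(t/q)^{1/2}\notin\mbb{F}$, and individual modes of $\xi(z)$ genuinely carry half-integer powers of $t/q$; it is only after extracting the constant term against the degree-zero rational factor that these half-powers recombine into integer powers, so the operator identity is proved over the extension and then descends to $\mbb{F}$, where both sides are defined --- this is consistent with (and sharpens) the paper's remark following the definition of $\xi(z)$, and is also where one should justify that applying the field automorphism $q\mapsto q^{-1}$, $t\mapsto t^{-1}$ entrywise to Theorem~\ref{thm:Macdonald_operators_free_field} is legitimate, $q$ and $t$ being indeterminates. For comparison, the paper's in-house model for results of this type is its proof of Theorem~\ref{thm:free_field_G_inverse} in Appendix~\ref{app:proof_free_field_G_inverse}: there the inverted-parameter realization is verified directly, by acting on $\bra{0}\Gamma_{n}(X)_{+}$, evaluating residues, and inducting on $r$ against the $n$-variable difference operators. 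Your symmetry argument buys economy --- no new residue computation, everything is inherited from Theorem~\ref{thm:Macdonald_operators_free_field} --- at the cost of the base-field extension and the grading bookkeeping, whereas the direct route stays over $\mbb{F}$ throughout and simultaneously reproves the eigenvalue property; had the paper proved this theorem rather than cited it, your approach would be the shorter of the two.
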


We have also introduced the operators $G_{r}$, $r=1,2,\dots$ that are also diagonalized by the Macdonald symmetric functions (Subsection \ref{subsec:other_operators}).
These operators admit the following free field realization:
\begin{thm}[{\cite[Proposition 3.17]{FeiginHashizumeHoshinoShiraishiYanagida2009}}]
For each $r=1,2,\dots$, the following operators on $\mcal{F}$ is the free field realization of $G_{r}$:
\begin{equation*}
	\what{G}_{r}=\frac{(-1)^{r}q^{\binom{r}{2}}}{(q;q)_{r}}\int\left(\prod_{i=1}^{r}\frac{dz_{i}}{2\pi\sqrt{-1}z_{i}}\right)\prod_{1\le i<j\le r}\frac{1-z_{j}/z_{i}}{1-qz_{j}/z_{i}}\no{\eta(z_{1})\cdots\eta(z_{r})}.
\end{equation*}
\end{thm}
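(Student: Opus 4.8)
The plan is to reduce the statement to a single eigenvalue computation. Since the Macdonald states $\ket{P_{\lambda}}$, $\lambda\in\mbb{Y}$, form a basis of $\mcal{F}$, and since Theorem \ref{thm:operator_G} characterises $G_{r}$ as the unique operator diagonalised by the $P_{\lambda}$ with eigenvalue $g_{r}(q^{\lambda}t^{-\delta};q,t)$, it suffices to prove that the operator $\what{G}_{r}$ defined by the right-hand side satisfies
\[
	\what{G}_{r}\ket{P_{\lambda}}=g_{r}(q^{\lambda}t^{-\delta};q,t)\ket{P_{\lambda}},\qquad \lambda\in\mbb{Y}.
\]
Once this holds on the whole basis, linearity forces $\what{G}_{r}=\iota^{-1}G_{r}\iota$, which is exactly the assertion that $\what{G}_{r}$ is the free field realization of $G_{r}$. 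So I would not manipulate $\what{G}_{r}$ abstractly, but instead extract its eigenvalue on each $\ket{P_{\lambda}}$ directly.

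First I would record the operator product expansion of the currents. A direct application of the Baker--Campbell--Hausdorff formula to the definition (\ref{eq:vertex_eta}), using only the Heisenberg relations (\ref{eq:deformed_Heisenberg}), gives
\[
	\eta(z)\eta(w)=\frac{(1-w/z)(1-(q/t)w/z)}{(1-t^{-1}w/z)(1-qw/z)}\no{\eta(z)\eta(w)},
\]
and hence, for any number of currents, $\eta(z_{1})\cdots\eta(z_{r})$ equals $\no{\eta(z_{1})\cdots\eta(z_{r})}$ times the product of these scalar factors over $i<j$. This is the one nontrivial computational input. The role of the explicit kernel $\prod_{1\le i<j\le r}\frac{1-z_{j}/z_{i}}{1-qz_{j}/z_{i}}$ is then transparent: it is tuned against this OPE so that, upon acting on states and extracting constant terms, the surviving poles localise onto the configurations $z_{j}=q^{\pm 1}z_{i}$, in exact parallel with the $t^{-1}$-kernel of $\what{E}_{r}$ in Theorem \ref{thm:Macdonald_operators_free_field} --- the difference between the two kernels being precisely what trades the elementary eigenvalues $e_{r}$ for the one-row eigenvalues $g_{r}=Q_{(r)}$.

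With the OPE in hand I would compute the constant-term integral and identify the eigenvalue. The cleanest route to the level of symmetric functions is to apply Proposition \ref{prop:iota_vertex_expression}: writing $\iota(\what{G}_{r}\ket{P_{\lambda}})=\braket{0|\Gamma(X)_{+}\what{G}_{r}|P_{\lambda}}$ and commuting $\Gamma(X)_{+}$ through the $\eta$'s produces only scalar contractions, so the whole matrix element becomes an explicit multiple $q$-integral. Taking the iterated residues at the poles dictated by the kernel and the OPE collapses this integral to a finite sum indexed by $\nu\in(\mbb{Z}_{\ge 0})^{n}$, which reproduces term by term the combinatorial definition of the difference operators $H_{r}^{(n)}$ of Subsection \ref{subsec:other_operators}. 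One then reads off that $\iota(\what{G}_{r}\ket{P_{\lambda}})$ equals $G_{r}P_{\lambda}$, and invokes the known eigenrelation $G_{r}P_{\lambda}=g_{r}(q^{\lambda}t^{-\delta})P_{\lambda}$. The prefactor $\frac{(-1)^{r}q^{\binom{r}{2}}}{(q;q)_{r}}$ together with the $r!$ symmetrisation of the integrand must be matched against the renormalisation coefficients $(-1)^{l}q^{-\binom{l}{2}-l(r-l)}(q^{-l+r-1};q)_{l}$ expressing $G_{r}$ through the $H_{l}$ in Theorem \ref{thm:operator_G}.

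The hard part will be the contour and constant-term bookkeeping. Because poles arise both from the explicit kernel and from the current OPE, all iterated residues must be computed with the single consistent expansion convention fixed after Theorem \ref{thm:Macdonald_determinant} (expand $\frac{1}{z-\gamma w}$ in $\mbb{F}[w]((z^{-1}))$, so that $\frac{1}{z-\gamma w}\ne -\frac{1}{\gamma w-z}$). Under this convention I would have to verify that the value of the multiple constant term is independent of the order in which the variables are collapsed after symmetrisation, and that the powers of $q$ accumulated from the successive residues assemble into exactly $q^{\binom{r}{2}}$ and the Pochhammer $(q;q)_{r}$. Matching these $q$-powers and signs precisely to the coefficients of $G_{r}$ is the delicate point; the conceptual steps --- the OPE, the pole localisation, and the reduction to the known eigenrelation --- follow the template already established for $\what{E}_{r}$.
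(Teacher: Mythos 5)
Your overall skeleton is the right one, and it is essentially the method this paper itself relies on. Note first that the paper does not reprove the quoted statement: it is imported from \cite[Proposition 3.17]{FeiginHashizumeHoshinoShiraishiYanagida2009}, and the internal model for such a proof is Appendix \ref{app:proof_free_field_G_inverse}, where the parameter-inverted analogue (Theorem \ref{thm:free_field_G_inverse}) is proved exactly along your lines: reduce to the identity $\bra{0}\Gamma_{n}(X)_{+}\what{G}_{r}=G^{(n)}_{r}\bra{0}\Gamma_{n}(X)_{+}$ for every $n$-variable reduction, commute $\Gamma_{n}(X)_{+}$ through the currents, evaluate the constant terms by residues so as to generate $q$-shift difference operators, and match the outcome against the $H^{(n)}_{l}$-expansion of Theorem \ref{thm:operator_G}. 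Your OPE for $\eta(z)\eta(w)$ is also correct. The problems are in how you propose to execute the residue computation.

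Two concrete gaps. First, your localisation heuristic is wrong: the poles that matter are not at $z_{j}=q^{\pm 1}z_{i}$ but at $x$-dependent points. After commuting $\Gamma_{n}(X)_{+}$ through, each $z_{j}$ carries the contraction factor $\prod_{i=1}^{n}\frac{1-t^{-1}x_{i}z_{j}}{1-x_{i}z_{j}}$, and the $z_{1}$ constant term picks up residues at $z_{1}=x_{k}^{-1}$, $k=1,\dots,n$, and at $z_{1}=\infty$. At $z_{1}=x_{k}^{-1}$ the evaluated current acts as a shift, $\bra{0}\eta(x_{k}^{-1})\Gamma_{n}(X)_{+}=T_{q,x_{k}}\bra{0}\Gamma_{n}(X)_{+}$ (the $\eta$-analogue of Lemma \ref{lem:q-inv_shift}); the kernel factor evaluated there cancels the pole of the remaining variables at $z_{j}=x_{k}^{-1}$ and, together with the shift, creates a new pole at the $q$-shifted point $z_{j}=q^{-1}x_{k}^{-1}$, which is how the multiplicities $\nu_{k}\ge 2$ in $H^{(n)}_{l}$ arise; and the residues at infinity, where each contraction factor tends to $t^{-1}$, are what produce the prefactor $t^{-nr}$ of $G^{(n)}_{r}$ in Theorem \ref{thm:operator_G} --- a factor your accounting never mentions. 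Second, and more seriously, the one-shot ``collapse to a finite sum over $\nu$'' does not close as stated: taking the residue at $z_{1}=x_{k}^{-1}$ and pushing $T_{q,x_{k}}$ to the right shifts only the $k$-th family of numerator factors, so the remaining $(r-1)$-fold integral is \emph{not} of the original form and no direct induction on $r$ is available for the statement as you posed it. The appendix repairs exactly this by enlarging the claim to a shift-parameter family --- the analogues of $\mfrak{G}^{(\mu)}_{r}$ and $H^{(n),(\mu)}_{l}$ in (\ref{eq:G_general})--(\ref{eq:H_general}) --- and closing the induction with the partial-fraction identity of Lemma \ref{lem:partial_fraction} and the recursions (\ref{eq:assist1})--(\ref{eq:assist2}); fixing one collapsing order once and for all also disposes of your worry about order-independence. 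Without this device (or an equivalent one), the ``delicate bookkeeping'' you defer is not a finishing touch but the entire proof, so as written the proposal stops precisely where the argument begins.
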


The inversion of the parameters $q\to q^{-1}$, $t\to t^{-1}$ again involves the replacement of vertex operators $\eta(z) \to \xi(z)$.
\begin{thm}
\label{thm:free_field_G_inverse}
For each $r=1,2,\dots,$ the following operator on $\mcal{F}$ is the free field realization of $G_{r}(q^{-1},t^{-1})$:
\begin{equation*}
	\what{G}_{r}(q^{-1},t^{-1})=\frac{(-1)^{r}q^{-\binom{r}{2}}}{(q^{-1};q^{-1})_{r}}\int\left(\prod_{i=1}^{r}\frac{dz_{i}}{2\pi\sqrt{-1}z_{i}}\right)\prod_{1\le i<j\le r}\frac{1-z_{j}/z_{i}}{1-q^{-1}z_{j}/z_{i}}\no{\xi(z_{1})\cdots\xi(z_{r})}.
\end{equation*}
\end{thm}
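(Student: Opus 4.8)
The plan is to \emph{transport} the free field realization of $G_{r}$ from the preceding theorem (\cite[Proposition 3.17]{FeiginHashizumeHoshinoShiraishiYanagida2009}, taken as given) across the parameter inversion $q\to q^{-1}$, $t\to t^{-1}$. That theorem is an operator identity whose coefficients are rational in the indeterminates $q,t$, so substituting $q\to q^{-1}$, $t\to t^{-1}$ simultaneously in the formula and in the defining relations (\ref{eq:deformed_Heisenberg}) produces a valid identity on the inverted Fock space $\mcal{F}_{q^{-1},t^{-1}}$, built from generators $a_{n}^{\mathrm{inv}}$ obeying (\ref{eq:deformed_Heisenberg}) with $q,t$ inverted. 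Explicitly, it states that $\iota_{q^{-1},t^{-1}}^{-1}G_{r}(q^{-1},t^{-1})\iota_{q^{-1},t^{-1}}$ equals the claimed expression but with $\xi$ replaced by the inverted vertex operator $\eta^{q^{-1},t^{-1}}(z)=\exp(\sum_{n>0}\frac{1-t^{n}}{n}a_{-n}^{\mathrm{inv}}z^{n})\exp(-\sum_{n>0}\frac{1-t^{-n}}{n}a_{n}^{\mathrm{inv}}z^{-n})$; note that the prefactor $(-1)^{r}q^{-\binom{r}{2}}/(q^{-1};q^{-1})_{r}$ and the kernel already coincide with the statement, so the whole task is to convert $\eta^{q^{-1},t^{-1}}$ into $\xi$ while returning from $\mcal{F}_{q^{-1},t^{-1}}$ to $\mcal{F}$.

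The bridge is a Heisenberg isomorphism $\phi\colon\mfrak{h}\to\mfrak{h}_{q^{-1},t^{-1}}$ fixing the center and rescaling modes by $\phi(a_{n})=\beta_{n}a_{n}^{\mathrm{inv}}$. Matching the central terms of (\ref{eq:deformed_Heisenberg}) forces $\beta_{n}\beta_{-n}=(q/t)^{n}$ for $n>0$, and I fix the residual freedom by demanding that the induced Fock-space isomorphism $\Phi\colon\mcal{F}\to\mcal{F}_{q^{-1},t^{-1}}$ satisfy $\Phi^{-1}\eta^{q^{-1},t^{-1}}(z)\Phi=\xi(z)$. Using $\Phi^{-1}a_{\pm n}^{\mathrm{inv}}\Phi=\beta_{\pm n}^{-1}a_{\pm n}$ together with the elementary identity $1-t^{-n}=-t^{-n}(1-t^{n})$, one checks that both requirements are met by $\beta_{-n}=t^{n}(q/t)^{n/2}$ and $\beta_{n}=t^{-n}(q/t)^{n/2}$: substituting these into the exponents of $\eta^{q^{-1},t^{-1}}$ reproduces exactly those of $\xi$. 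Because $\phi$ is graded (each $a_{\pm n}$ is merely scaled, with no mixing of modes), $\Phi$-conjugation respects the decomposition into creation and annihilation parts; hence it sends normally ordered products to normally ordered products and commutes with the constant-term functional $\int\prod_{i}\frac{dz_{i}}{2\pi\sqrt{-1}z_{i}}$. Conjugating the inverted identity of the first paragraph by $\Phi$ therefore turns its right-hand side into precisely the claimed $\xi$-expression.

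The main obstacle is that $\Phi$ is \emph{not} the canonical identification $J:=\iota_{q^{-1},t^{-1}}^{-1}\iota$ that matches power-sum vectors $a_{-\lambda_{1}}a_{-\lambda_{2}}\cdots\ket{0}\leftrightarrow a_{-\lambda_{1}}^{\mathrm{inv}}a_{-\lambda_{2}}^{\mathrm{inv}}\cdots\ket{0}$, and it is $J$, not $\Phi$, through which the free field realization is computed. The two differ by $D:=J^{-1}\Phi\in\mrm{End}(\mcal{F})$, which is diagonal in the basis $\{\ket{\lambda}\}$ with eigenvalue $\prod_{i}\beta_{-\lambda_{i}}$ on $\ket{\lambda}=a_{-\lambda_{1}}a_{-\lambda_{2}}\cdots\ket{0}$; tracking it through the conjugation yields $\what{G}_{r}(q^{-1},t^{-1})=D\,[\text{the claimed }\xi\text{-expression}]\,D^{-1}$, so I must show that $D$ drops out. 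This is where the explicit $\beta_{n}$ pay off: $\prod_{i}\beta_{-\lambda_{i}}=\prod_{i}t^{\lambda_{i}}(q/t)^{\lambda_{i}/2}=(qt)^{|\lambda|/2}$ depends only on $|\lambda|$, so $D$ acts as a scalar on each graded component of $\mcal{F}$. Since $G_{r}(q^{-1},t^{-1})$ is diagonalized by the Macdonald functions $P_{\lambda}(X;q^{-1},t^{-1})$, which are homogeneous of degree $|\lambda|$, its free field realization preserves the grading, and a grading-preserving operator commutes with any operator that is scalar on each graded piece. Hence $D$ commutes with the bracketed expression and cancels, which completes the proof.
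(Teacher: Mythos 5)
Your argument is correct, and it takes a genuinely different route from the paper's. The paper proves Theorem \ref{thm:free_field_G_inverse} head-on in Appendix \ref{app:proof_free_field_G_inverse}: it reduces the claim to the $n$-variable identities $\bra{0}\Gamma_{n}(X)_{+}\what{G}_{r}(q^{-1},t^{-1})=G^{(n)}_{r}(q^{-1},t^{-1})\bra{0}\Gamma_{n}(X)_{+}$, embeds both sides in the shifted families $\mfrak{G}_{r}^{(\mu)}$ and $H_{r}^{(n),(\mu)}$, $\mu\in\mbb{Z}^{n}$, and proves the stronger Theorem \ref{thm:free_field_G_general} by induction on $r$, integrating out $z_{1}$ via the residues at $z_{1}=t^{1/2}q^{1/2}x_{k}^{-1}$ and at infinity and invoking the partial-fraction Lemma \ref{lem:partial_fraction}; the shift $\mu\mapsto\mu+\epsilon_{k}$ produced by the residue computation is exactly why the generalized family is needed there. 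You instead transport \cite[Proposition 3.17]{FeiginHashizumeHoshinoShiraishiYanagida2009} across $q\to q^{-1}$, $t\to t^{-1}$ --- legitimate because the paper treats $q,t$ as indeterminates, so the cited identity is an equality of matrix coefficients rational in $(q,t)$ and the substitution is a field automorphism of $\mbb{C}(q,t)$ --- and then repair the mismatch between the canonical identification $J$ and your mode-rescaling isomorphism $\Phi$ by $D=J^{-1}\Phi$. Your computations check out: matching central terms forces $\beta_{n}\beta_{-n}=(q/t)^{n}$; with $\beta_{-n}=t^{n}(q/t)^{n/2}$, $\beta_{n}=t^{-n}(q/t)^{n/2}$ the identity $1-t^{-n}=-t^{-n}(1-t^{n})$ converts the exponents of $\eta^{q^{-1},t^{-1}}$ into those of $\xi$ in both the creation and annihilation parts; mode-wise rescaling commutes with normal ordering and with the constant-term functional; $D$ acts by $(qt)^{|\lambda|/2}$ in degree $|\lambda|$; and the $\xi$-expression is degree-preserving (it is the constant term of a degree-zero kernel against vertex operators, or, as you argue, conjugate to the degree-preserving $\what{G}_{r}(q^{-1},t^{-1})$), so conjugation by $D$ is trivial on it. Two points you should flag explicitly to make the write-up airtight: $\beta_{\pm n}$, $\Phi$ and $D$ live only over $\mbb{C}(q^{1/2},t^{1/2})$, so the final identity is first obtained over this quadratic extension and then descends to $\mbb{F}$ since both sides are $\mbb{F}$-rational (harmless --- the paper's $\xi$ already carries such half-powers, with integral mode coefficients); and the existence of $G_{r}(q^{-1},t^{-1})=\varprojlim_{n}G^{(n)}_{r}(q^{-1},t^{-1})$ and its diagonalization by the homogeneous $P_{\lambda}(X;q^{-1},t^{-1})$, which your grading step uses, come from the same generic-parameter substitution applied to Theorem \ref{thm:operator_G}. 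As for what each approach buys: yours is shorter and conceptual, explaining why the result was ``naturally expected'' from \cite{FeiginHashizumeHoshinoShiraishiYanagida2009} (the identical transport re-derives $\what{E}_{r}(q^{-1},t^{-1})$ as well), whereas the paper's direct computation is self-contained and yields the shifted identity of Theorem \ref{thm:free_field_G_general} as a by-product of independent interest.
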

Though naturally expected from \cite{FeiginHashizumeHoshinoShiraishiYanagida2009}, this result is not stated nor proved therein. We will give a proof of Theorem \ref{thm:free_field_G_inverse} in Appendix \ref{app:proof_free_field_G_inverse}.

\section{Determinantal expression and the Schur-limit}
\label{sect:determinant}
In this section, we derive alternative expressions of the free field realizations presented in the previous Section \ref{sect:free_field_realization} by using determinants to prove Theorem \ref{thm:Macdonald_determinant} and the determinantal expressions of other operators. We also discuss the Schur-limit to see that the determinant gains a natural interpretation in terms of free fermions.

\subsection{Preliminaries}
We introduce the operation of symmetrization.
\begin{defn}
For a function $f(z_{1},\dots, z_{n})$ of $n$ variables $z_{1},\dots, z_{n}$,
the symmetrization is defined by
\begin{equation*}
	\mrm{Sym}[f(z_{1},\dots, z_{n})]:=\frac{1}{n!}\sum_{\sigma\in \mfrak{S}_{n}}f(z_{\sigma (1)},\dots, z_{\sigma (n)}).
\end{equation*}
\end{defn}

The following lemma has been observed e.g. in the proof of \cite[Lemma 9.4]{Shiraishi2006} and plays a key role in this section.
\begin{lem}
\label{lem:key_symmetrization}
We have
\begin{equation*}
	\mrm{Sym}\left(\prod_{1\le i<j\le n}\frac{z_{i}-z_{j}}{z_{i}-tz_{j}}\right)=\frac{[n]_{t}!}{n!}\prod_{1\le i<j\le n}\frac{(z_{i}-z_{j})(z_{j}-z_{i})}{(z_{i}-tz_{j})(z_{j}-tz_{i})}.
\end{equation*}
\end{lem}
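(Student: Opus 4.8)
The plan is to prove the identity by establishing that both sides are rational functions with the same poles, the same residues (or the same degree behaviour), and then pin down the overall constant. First I would observe that the claimed right-hand side
\[
R(z_1,\dots,z_n):=\frac{[n]_t!}{n!}\prod_{1\le i<j\le n}\frac{(z_i-z_j)(z_j-z_i)}{(z_i-tz_j)(z_j-tz_i)}
\]
is manifestly symmetric in $z_1,\dots,z_n$, so it is a legitimate candidate for a symmetrized expression, and that the left-hand side $L:=\mrm{Sym}\!\left(\prod_{i<j}\frac{z_i-z_j}{z_i-tz_j}\right)$ is symmetric by construction. Both sides are homogeneous of degree $0$ in the $z_i$ (each factor is a ratio of degree-one polynomials), so it suffices to compare them as rational functions of the ratios.

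The cleanest route I would take is induction on $n$, extracting one variable at a time via residues. Fix $z_n$ and regard both sides as rational functions of $z_n$; their only poles sit at $z_n=t^{-1}z_j$ and $z_n=tz_j$ for $j<n$ (for $R$) and at $z_n=t^{-1}z_j$ coming from the terms of the symmetrized sum where $z_n$ appears in a denominator. I would compute the residue at $z_n=t^{-1}z_k$ on each side: on the left only the permutations placing $z_n$ in a specific denominator slot contribute, and the residue factors into a product over the remaining pairs times a symmetrization over $n-1$ variables; on the right the residue is read off directly from the single pole. Matching these residues should reduce the $n$-variable identity to the $(n-1)$-variable case, which is the inductive hypothesis. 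The base cases $n=1$ (both sides equal $1$) and $n=2$ (a direct two-term computation, using $[2]_t!=1+t$) are immediate.

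The main obstacle I anticipate is the residue bookkeeping on the left-hand side: one must correctly count which of the $n!$ permutations contribute a pole at a given $z_n=t^{-1}z_k$, track the sign and the surviving factors $\frac{z_i-z_j}{z_i-tz_j}$ over the other pairs, and verify that the residue reassembles into $\frac{1}{n}\,\mrm{Sym}_{n-1}(\cdots)$ with the correct combinatorial prefactor relating $[n]_t!$ to $[n-1]_t!$ (the factor $[n]_t=\frac{1-t^n}{1-t}$ is exactly what should emerge). A viable alternative, should the residue argument prove delicate, is to compare the two sides after clearing the symmetric denominator $\prod_{i<j}(z_i-tz_j)(z_j-tz_i)$: both become symmetric polynomials (antisymmetric times antisymmetric, hence symmetric) of controlled degree, and one checks equality by evaluating on the hyperplanes $z_i=z_j$ where the Vandermonde-type numerator vanishes, together with fixing the top-degree coefficient. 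Either way, once the functional equality of the two symmetric rational functions is secured away from a proper closed subset, it holds identically, completing the proof.
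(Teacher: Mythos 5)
Your primary route---induction on $n$ by matching residues in $z_n$---has a concrete flaw. First, the pole bookkeeping is off: each summand $\prod_{a<b}\frac{z_{\sigma(a)}-z_{\sigma(b)}}{z_{\sigma(a)}-tz_{\sigma(b)}}$ on the left has denominator factors $z_{\sigma(a)}-tz_{\sigma(b)}$, which produce poles at $z_n=tz_j$ (when $\sigma(a)=n$) as well as at $z_n=t^{-1}z_j$ (when $\sigma(b)=n$), so the left-hand side carries both families of poles, exactly as the right-hand side does. More seriously, the residue at $z_n=t^{-1}z_k$ does \emph{not} reduce to the $(n-1)$-variable instance of the same identity: substituting $z_n=t^{-1}z_k$ into the surviving factors gives, for a third variable $z_l$,
\begin{equation*}
	\frac{z_n-z_l}{z_n-tz_l}\bigg|_{z_n=t^{-1}z_k}=\frac{z_k-tz_l}{z_k-t^{2}z_l},
\end{equation*}
so $z_k$ and $z_n$ fuse into a cluster that interacts with the remaining variables through $t^{2}$-shifted kernels (for instance, if $k,n,l$ occur in this order one finds $\frac{z_k-z_l}{z_k-tz_l}\cdot\frac{z_k-tz_l}{z_k-t^{2}z_l}=\frac{z_k-z_l}{z_k-t^{2}z_l}$). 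The reduced statement is therefore a clustered identity with parameter $t^{2}$, not the lemma at $n-1$, and the induction as you describe it does not close unless you prove a strictly more general family of identities.

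Your fallback route is the one that works, but it is underspecified at exactly the decisive point. After clearing $\prod_{i<j}(z_i-tz_j)(z_j-tz_i)$ (which equals the symmetric polynomial $\prod_{i\neq j}(z_i-tz_j)$), the claim reduces, by antisymmetry and the degree count $\binom{n}{2}$, to $\sum_{\sigma\in\mathfrak{S}_n}\mathrm{sgn}(\sigma)\,\sigma\bigl(\prod_{i<j}(tz_i-z_j)\bigr)=[n]_{t}!\,\prod_{i<j}(z_i-z_j)$; the quotient by the Vandermonde is constant in the $z_i$ but a nontrivial polynomial in $t$, and ``fixing the top-degree coefficient'' is where all the content lives, since the staircase coefficient receives contributions from many permutations. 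A clean finish is to specialize $z_i=t^{\,n-i}$: a term vanishes as soon as $\sigma(j)=\sigma(i)-1$ for some $i<j$, so only the identity permutation survives, and the resulting ratio telescopes to $\prod_{d=1}^{n-1}\bigl(\frac{t^{d+1}-1}{t^{d}-1}\bigr)^{n-d}=[n]_{t}!$. Note that this is essentially a from-scratch reproof of the Hall--Littlewood normalization identity $\sum_{\sigma}\sigma\bigl(\prod_{i<j}\frac{z_i-tz_j}{z_i-z_j}\bigr)=[n]_{t}!$, which the paper simply quotes from Macdonald's book (Chapter III, (1.4)) and then converts into the lemma in three lines, by multiplying both sides by the symmetric function $\prod_{i<j}\frac{(z_i-z_j)(z_j-z_i)}{(z_i-tz_j)(z_j-tz_i)}$ and reindexing the sum by the longest element of $\mathfrak{S}_n$. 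So to repair your proposal: drop (or substantially generalize) the residue induction, and make the constant evaluation in the polynomial argument explicit.
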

\begin{proof}
The Hall--Littlewood polynomial for the empty partition reads \cite[Chapter III, (1.4)]{Macdonald1995}
\begin{equation*}
	\sum_{\sigma\in\mfrak{S}_{n}}\sigma\left(\prod_{i<j}\frac{z_{i}-tz_{j}}{z_{i}-z_{j}}\right)=[n]_{t}!.
\end{equation*}
When we multiply $\prod_{i<j}\frac{(z_{i}-z_{j})(z_{j}-z_{i})}{(z_{i}-tz_{j})(z_{j}-tz_{i})}$ on the both sides, we obtain
\begin{equation*}
	\sum_{\sigma\in\mfrak{S}_{n}}\prod_{i<j}\frac{z_{\sigma(j)}-z_{\sigma(i)}}{z_{\sigma(j)}-tz_{\sigma(i)}}=[n]_{t}!\prod_{i<j}\frac{(z_{i}-z_{j})(z_{j}-z_{i})}{(z_{i}-tz_{j})(z_{j}-tz_{i})}.
\end{equation*}
When we write the longest element in $\mfrak{S}_{n}$ as,
\begin{equation*}
	\sigma^{\ast}=\left(
	\begin{array}{cccc}
	1 & 2 & \cdots & n \\
	n & n-1 & \cdots &1
	\end{array}
	\right),
\end{equation*}we see that
\begin{equation*}
	\sum_{\sigma\in\mfrak{S}_{n}}\sigma\circ\sigma^{\ast}\left(\prod_{i<j}\frac{z_{i}-z_{j}}{z_{i}-tz_{j}}\right)=[n]_{t}!\prod_{i<j}\frac{(z_{i}-z_{j})(z_{j}-z_{i})}{(z_{i}-tz_{j})(z_{j}-tz_{i})},
\end{equation*}
where the left hand side is $n!\mrm{Sym}\left(\prod_{i<j}\frac{z_{i}-z_{j}}{z_{i}-tz_{j}}\right)$.
\end{proof}

\subsection{Proof of Theorem \ref{thm:Macdonald_determinant} and other operators}
Now we are in position to prove Theorem \ref{thm:Macdonald_determinant}.
\begin{proof}[Proof of Theorem \ref{thm:Macdonald_determinant}]
Recall that the constant term of a multivariable function is invariant under symmetrization.
Therefore we have
\begin{align*}
	\what{E}_{r}=\frac{t^{-r(r+1)/2}}{(t^{-1};t^{-1})_{r}}\int\left(\prod_{i=1}^{r}\frac{dz_{i}}{2\pi\sqrt{-1}z_{i}}\right)
				\mrm{Sym}\left(\prod_{1\le i<j\le r}\frac{1-z_{j}/z_{i}}{1-t^{-1}z_{j}/z_{i}}\right)\no{\eta(z_{1})\cdots \eta(z_{r})}.
\end{align*}
Note that the normally-ordered product of vertex operators is symmetric under exchange of variables.
As we saw in Lemma \ref{lem:key_symmetrization},
\begin{equation*}
	\mrm{Sym}\left(\prod_{1\le i<j\le r}\frac{1-z_{j}/z_{i}}{1-t^{-1}z_{j}/z_{i}}\right)
	=\frac{[r]_{t^{-1}}!}{r!}\prod_{1\le i<j\le r}\frac{(1-z_{j}/z_{i})(1-z_{i}/z_{j})}{(1-t^{-1}z_{j}/z_{i})(1-t^{-1}z_{i}/z_{j})}.
\end{equation*}
Thus
\begin{align*}
	\what{E}_{r}
	=\frac{t^{-r(r+1)/2}}{(t^{-1};t^{-1})_{r}}\frac{[r]_{t^{-1}}!}{r!}\int\left(\prod_{i=1}^{r}\frac{dz_{i}}{2\pi\sqrt{-1}z_{i}}\right)
				&\prod_{1\le i<j\le r}\frac{(1-z_{j}/z_{i})(1-z_{i}/z_{j})}{(1-t^{-1}z_{j}/z_{i})(1-t^{-1}z_{i}/z_{j})}\\
	&\times\no{\eta(z_{1})\cdots \eta(z_{r})}.
\end{align*}
The product can be further computed as
\begin{align*}
	&\prod_{1\le i<j\le r}\frac{(1-z_{j}/z_{i})(1-z_{i}/z_{j})}{(1-t^{-1}z_{j}/z_{i})(1-t^{-1}z_{i}/z_{j})}\\
	&=\frac{\prod_{i<j}(z_{i}-z_{j})\prod_{i<j}(z_{j}-z_{i})}{\prod_{i<j}(z_{i}-t^{-1}z_{j})\prod_{i<j}(z_{j}-t^{-1}z_{i})} \\
	&=t^{r(r-1)/2}(1-t^{-1})^{r}\prod_{i=1}^{r}z_{i}\frac{\prod_{i<j}(z_{i}-z_{j})\prod_{i<j}(t^{-1}z_{j}-t^{-1}z_{i})}{\prod_{i,j}(z_{i}-t^{-1}z_{j})}.
\end{align*}
Now recall the Cauchy determinant formula
\begin{equation*}
	\det\left(\frac{1}{x_{i}-y_{j}}\right)_{i,j}=\frac{\prod_{i<j}(x_{i}-x_{j})\prod_{i<j}(y_{j}-y_{i})}{\prod_{i,j}(x_{i}-y_{j})}.
\end{equation*}
Using this, we obtain
\begin{align*}
	\what{E}_{r}
	&=\frac{t^{-r(r+1)/2}}{(t^{-1};t^{-1})_{r}}\frac{[r]_{t^{-1}}!}{r!}\int\left(\prod_{i=1}^{r}\frac{dz_{i}}{2\pi\sqrt{-1}z_{i}}\right)
		t^{r(r-1)/2}(1-t^{-1})^{r}\prod_{i=1}^{r}z_{i}\\
	&\hspace{120pt}\times\det\left(\frac{1}{z_{i}-t^{-1}z_{j}}\right)_{1\le i,j\le r}\no{\eta(z_{1})\cdots\eta(z_{r})}\\
	&=\frac{t^{-r}}{r!}\int\left(\prod_{i=1}^{r}\frac{dz_{i}}{2\pi\sqrt{-1}}\right)\det\left(\frac{1}{z_{i}-t^{-1}z_{j}}\right)_{1\le i,j\le r}
				\no{\eta(z_{1})\cdots \eta(z_{r})}.
\end{align*}
This is the desired result.
\end{proof}

In similar manners, we can also derive the following expressions.
\begin{thm}
\label{thm:others_determinant}
For $r=1,2,\dots$, we have
\begin{align*}
	\what{E}_{r}(q^{-1},t^{-1})&=\frac{t^{r}}{r!}\int\left(\prod_{i=1}^{r}\frac{dz_{i}}{2\pi\sqrt{-1}}\right)\det\left(\frac{1}{z_{i}-tz_{j}}\right)_{1\le i,j\le r}\no{\xi(z_{1})\cdots \xi(z_{r})}, \\
	\what{G}_{r}&=\frac{(-1)^{r}}{r!}\int \left(\prod_{i=1}^{r}\frac{dz_{i}}{2\pi\sqrt{-1}}\right)\det\left(\frac{1}{z_{i}-qz_{j}}\right)_{1\le i,j\le r}\no{\eta(z_{1})\cdots\eta(z_{r})}, \\
	\what{G}_{r}(q^{-1},t^{-1})&=\frac{(-1)^{r}}{r!}\int \left(\prod_{i=1}^{r}\frac{dz_{i}}{2\pi\sqrt{-1}}\right)\det\left(\frac{1}{z_{i}-q^{-1}z_{j}}\right)_{1\le i,j\le r}\no{\xi(z_{1})\cdots\xi(z_{r})}
\end{align*}
\end{thm}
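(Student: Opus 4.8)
The plan is to run the proof of Theorem \ref{thm:Macdonald_determinant} essentially verbatim for each of the three operators, the only differences being the deformation parameter in the rational prefactor, the choice of vertex operator, and the bookkeeping of scalar constants. I would unify the three computations by introducing a generic formal parameter $\gamma$ in place of $t^{-1}$: concretely $\gamma=t$ with the vertex operator $\xi$ for $\what{E}_r(q^{-1},t^{-1})$, $\gamma=q$ with $\eta$ for $\what{G}_r$, and $\gamma=q^{-1}$ with $\xi$ for $\what{G}_r(q^{-1},t^{-1})$. Each of the corresponding free field realizations recalled in Section \ref{sect:free_field_realization} carries a rational factor of the shape $\prod_{1\le i<j\le r}\frac{1-z_j/z_i}{1-\gamma z_j/z_i}=\prod_{1\le i<j\le r}\frac{z_i-z_j}{z_i-\gamma z_j}$, so the three cases genuinely reduce to a single parametric calculation.

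First I would note that Lemma \ref{lem:key_symmetrization} is purely a formal identity and holds with $t$ replaced by any $\gamma$, giving
\begin{equation*}
\mrm{Sym}\left(\prod_{1\le i<j\le r}\frac{z_i-z_j}{z_i-\gamma z_j}\right)=\frac{[r]_\gamma!}{r!}\prod_{1\le i<j\le r}\frac{(z_i-z_j)(z_j-z_i)}{(z_i-\gamma z_j)(z_j-\gamma z_i)}.
\end{equation*}
Since the normally ordered products are symmetric under permutation of the $z_i$ and the constant-term functional is symmetrization invariant, I would symmetrize the integrand and substitute this identity, exactly as in the proved case. I would then convert the symmetrized product into a determinant through the Cauchy determinant formula with $x_i=z_i$ and $y_j=\gamma z_j$, which yields
\begin{equation*}
\prod_{1\le i<j\le r}\frac{(z_i-z_j)(z_j-z_i)}{(z_i-\gamma z_j)(z_j-\gamma z_i)}=\frac{(1-\gamma)^r}{\gamma^{\binom{r}{2}}}\left(\prod_{i=1}^r z_i\right)\det\left(\frac{1}{z_i-\gamma z_j}\right)_{1\le i,j\le r}.
\end{equation*}
The factor $\prod_{i=1}^r z_i$ absorbs into the integration measure, turning $\prod_i\frac{dz_i}{2\pi\sqrt{-1}z_i}$ into $\prod_i\frac{dz_i}{2\pi\sqrt{-1}}$ and producing the determinantal integrands stated in the theorem.

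The remaining step is bookkeeping of the scalar prefactor. If $C_r$ denotes the prefactor of the original free field realization, the total scalar collected along the way is $C_r\cdot\frac{[r]_\gamma!}{r!}\cdot\frac{(1-\gamma)^r}{\gamma^{\binom{r}{2}}}$; using $[r]_\gamma!(1-\gamma)^r=(\gamma;\gamma)_r$ this becomes $C_r\cdot\frac{(\gamma;\gamma)_r}{r!\,\gamma^{\binom{r}{2}}}$. Inserting the explicit $C_r$ and $\gamma$ for each operator cancels the $q$-Pochhammer symbol against the denominator of $C_r$ and collapses the powers of $\gamma$, yielding $\frac{t^r}{r!}$, $\frac{(-1)^r}{r!}$, and $\frac{(-1)^r}{r!}$ respectively. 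I do not expect a genuine obstacle, since the argument is structurally identical to Theorem \ref{thm:Macdonald_determinant}; the only point demanding care is that $\gamma$ is now sometimes the inverse of a parameter, so one must keep the expansion of the determinant entries $\frac{1}{z_i-\gamma z_j}$ consistent with the formal-series convention fixed after Theorem \ref{thm:Macdonald_determinant} (for which $\frac{1}{z-\gamma w}\ne-\frac{1}{\gamma w-z}$), which is what guarantees the determinantal integrands are the intended formal objects.
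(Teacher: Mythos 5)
Your proposal is correct and takes essentially the same approach as the paper: the paper's own proof of this theorem is simply the remark that it follows ``in similar manners'' from Theorem \ref{thm:Macdonald_determinant}, which is exactly your parametric rerun of that argument with $\gamma\in\{t,q,q^{-1}\}$ and the appropriate vertex operator, using Lemma \ref{lem:key_symmetrization} with $t$ replaced by $\gamma$ and the Cauchy determinant formula. Your scalar bookkeeping $C_{r}\cdot\frac{(\gamma;\gamma)_{r}}{r!\,\gamma^{\binom{r}{2}}}$ is verified in all three cases (yielding $\frac{t^{r}}{r!}$, $\frac{(-1)^{r}}{r!}$, $\frac{(-1)^{r}}{r!}$) and is consistent with the proved case $\gamma=t^{-1}$, which gives $\frac{t^{-r}}{r!}$ as in Theorem \ref{thm:Macdonald_determinant}.
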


Let us name a frequently used functional; for $r=1,2,\dots$ and $\gamma\in\mbb{F}$, we define a functional
\begin{equation*}
	\int D^{r}_{\gamma}z:=\int \left(\prod_{i=1}^{r}\frac{dz_{i}}{2\pi\sqrt{-1}}\right)\det\left(\frac{1}{z_{i}-\gamma z_{j}}\right): \mbb{F}[[z_{i},z_{i}^{-1}|i=1,\dots, r]]\to\mbb{F}.
\end{equation*}
The following property will be used in the next Section \ref{sect:expectation}.
\begin{lem}
\label{lem:measure_invariance}
The functional $\int D^{r}_{\gamma}z$ is invariant under a uniform scale transformation and the uniform inversion.
Namely, if we set
\begin{equation*}
	w_{i}=\alpha z_{i},\ \  i=1,\dots, r,\ \ \alpha\in\mbb{F}
\end{equation*}
or 
\begin{equation*}
	w_{i}=z_{i}^{-1},\ \ i=1,\dots, r,
\end{equation*}
then we have
\begin{equation*}
	\int D^{r}_{\gamma}z=\int D^{r}_{\gamma}w.
\end{equation*}
\end{lem}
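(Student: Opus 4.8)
The plan is to prove both invariances at the level of the full ``density'' $\det\!\left(\frac{1}{z_i-\gamma z_j}\right)\prod_{i=1}^{r}\frac{dz_i}{2\pi\sqrt{-1}}$, by tracking separately the factor produced by the determinant and the factor produced by the coefficient-extraction functional under each substitution, and checking that the two cancel. Since $\int D^{r}_{\gamma}z$ is $\mbb{F}$-linear in its argument, it suffices to compute $\int D^{r}_{\gamma}w$ in the $z$-variables for an arbitrary test series and show it reproduces $\int D^{r}_{\gamma}z$. Throughout I would read $\int\prod_i\frac{dz_i}{2\pi\sqrt{-1}}$ as the operation $[\prod_i z_i^{-1}]$ of extracting the coefficient of $z_1^{-1}\cdots z_r^{-1}$, so that every step is an identity of formal Laurent series rather than of contour integrals.

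For the scale transformation $w_i=\alpha z_i$ I would combine the elementary one-variable rule $[w^{-1}]H(w)=\alpha\,[z^{-1}]H(\alpha z)$, which over $r$ variables contributes a factor $\alpha^{r}$ from the measure, with the homogeneity $\det\!\left(\frac{1}{\alpha z_i-\gamma\alpha z_j}\right)=\alpha^{-r}\det\!\left(\frac{1}{z_i-\gamma z_j}\right)$, which contributes $\alpha^{-r}$. These cancel and give $\int D^{r}_{\gamma}w[f]=\int D^{r}_{\gamma}z[f(\alpha\,\cdot)]$, which is the asserted invariance. Here no convention issue arises, since a constant rescaling does not change which variable is ``large'', and the prescribed expansion of each entry $\frac{1}{z_i-\gamma z_j}$ in $\mbb{F}[z_j]((z_i^{-1}))$ is preserved.

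For the inversion $w_i=z_i^{-1}$ the two ingredients are a determinant identity and a coefficient-extraction identity. The determinant identity comes from writing $\frac{1}{w_i-\gamma w_j}=\frac{z_iz_j}{z_j-\gamma z_i}$, extracting $z_i$ from the $i$-th row and $z_j$ from the $j$-th column, and invoking $\det M^{\top}=\det M$; this yields $\det\!\left(\frac{1}{w_i-\gamma w_j}\right)=\left(\prod_{k}z_k\right)^{2}\det\!\left(\frac{1}{z_i-\gamma z_j}\right)$. The measure is handled by the one-variable identity $[w^{-1}]H(w)=[z^{-1}]\!\left(z^{-2}H(z^{-1})\right)$, contributing $\prod_i z_i^{-2}$ over $r$ variables; the factors $\left(\prod_k z_k\right)^2$ and $\prod_i z_i^{-2}$ cancel, giving $\int D^{r}_{\gamma}w[f]=\int D^{r}_{\gamma}z[f(\cdot^{-1})]$.

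The main obstacle is the bookkeeping of expansion conventions under inversion, precisely the point stressed after Theorem \ref{thm:Macdonald_determinant} that $\frac{1}{z-\gamma w}\neq-\frac{1}{\gamma w-z}$. Inversion swaps the ``large'' variable (large $w_i$ corresponds to small $z_i$), so I must verify that expanding $\frac{1}{w_i-\gamma w_j}$ in $\mbb{F}[w_j]((w_i^{-1}))$ transforms exactly into the expansion of $\frac{1}{z_j-\gamma z_i}$ with $z_j$ large, which is indeed the prescribed expansion of the $(j,i)$-entry of $\left(\frac{1}{z_a-\gamma z_b}\right)$; this is what makes the transpose step legitimate as an identity of formal series and not merely of rational functions. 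A second subtlety I would flag is that one must use the algebraic coefficient-extraction identity above rather than the naive Jacobian $dw=-z^{-2}dz$: the latter would introduce a spurious sign $(-1)^r$ that is physically cancelled by the orientation reversal of the contour under $w=z^{-1}$ and is correctly absent in the formal computation.
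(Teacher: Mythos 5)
Your proof is correct and follows essentially the same route as the paper's: the scale case via the cancellation of $\alpha^{r}$ against $\alpha^{-r}$, and the inversion case via the rewriting $\frac{1}{w_{i}-\gamma w_{j}}=\frac{z_{i}z_{j}}{z_{j}-\gamma z_{i}}$, row/column extraction plus $\det M^{\top}=\det M$ giving $\left(\prod_{k}z_{k}\right)^{2}$, cancelled by the factor $\prod_{i}z_{i}^{-2}$ from the coefficient-extraction functional. Your additional verification that the prescribed expansion in $\mbb{F}[w_{j}]((w_{i}^{-1}))$ maps under inversion to the prescribed expansion of the $(j,i)$-entry, and that the formal-residue identity carries no sign (unlike the naive Jacobian), makes explicit two points the paper's proof leaves implicit, and both checks are accurate.
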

\begin{proof}
The invariance under a scale transformation is obvious.
We consider the case of inversion: $w_{i}=z_{i}^{-1}$, $i=1,\dots, n$.
The operation taking the residues in $z_{i}$, $i=1,\dots, r$ is written as
\begin{equation*}
	\int \left(\prod_{i=1}^{r}\frac{dz_{i}}{2\pi\sqrt{-1}}\right)=\int \left(\prod_{i=1}^{r}\frac{dw_{i}}{2\pi\sqrt{-1}}\right)\prod_{i=1}^{r}w_{i}^{-2}.
\end{equation*}
The determinant gives
\begin{align*}
	\det\left(\frac{1}{z_{i}-\gamma z_{j}}\right)_{1\le i,j\le r}
	=\det\left(\frac{w_{i}w_{j}}{w_{j}-\gamma w_{i}}\right)_{1\le i,j\le r}
	=\prod_{i=1}^{r}w_{i}^{2}\det \left(\frac{1}{w_{i}-\gamma w_{j}}\right)_{1\le i,j\le r}.
\end{align*}
Then the desired invariance is proved.
\end{proof}

\subsection{Schur-limit}
Throughout this subsection, we fix $t$ and refer to the limit $q\to t$ as the Schur-limit.
To illustrate the origin of the determinantal expressions, we investigate the Schur-limit of the operators discussed above. To this aim, we introduce a semi-infinite wedge space and free fermion fields on it following \cite[Appendix A]{Okounkov2001}. Let $V$ be a complex vector space spanned by basis vectors $\underline{k}$, $k\in\mbb{Z}+\frac{1}{2}$. We say that a subset $S\subset\mbb{Z}+\frac{1}{2}$ is a Maja diagram if it possesses the following two properties: (1) $S_{+}:=S\backslash \left(\mbb{Z}_{\le 0}-\frac{1}{2}\right)$ is finite, (2) $S_{-}:=\left(\mbb{Z}_{\le 0}-\frac{1}{2}\right)\backslash S$ is finite, and write $\mfrak{M}$ for the collection of Maja diagrams. Writing elements in a Maja diagram $S\in\mfrak{M}$ in the descending order as $S=\set{s_{1}>s_{2}>\cdots}$, we associate to it a semi-infinite wedge $v_{S}:=\underline{s_{1}}\wedge\underline{s_{2}}\wedge\cdots$. The semi-infinite wedge of $V$ is defined by $\Xi=\bigwedge^{\infty/2}V:=\bigoplus_{S\in\mfrak{M}}\mbb{C}v_{S}$ that is equipped with a bilinear form such that $\braket{v_{S}|v_{\pr{S}}}=\delta_{S,\pr{S}}$. For each $r\in\mbb{Z}+\frac{1}{2}$, we define an operator $\psi_{r}$ on $\Xi$ by $\psi_{r}\cdot v:=\underline{r}\wedge v$ and $\psi_{r}^{\ast}$ as its adjoint operator. Then these operators exhibit the canonical anti-commutation relations:
\begin{equation*}
	\{\psi_{r},\psi_{s}\}=\{\psi^{\ast}_{r},\psi^{\ast}_{s}\}=0,\ \ \{\psi_{r},\psi^{\ast}_{s}\}=\delta_{r,s},\ \ r,s\in\mbb{Z}+\frac{1}{2},
\end{equation*}
where $\{A,B\}=AB+BA$ is the anti-commutator. We define the charge operator $C$ by $C\cdot v_{S}:=\left(|S_{+}|-|S_{-}|\right)v_{S}$, $S\in\mfrak{M}$, and the translation operator $T$ by $T\cdot \underline{s_{1}}\wedge\underline{s_{2}}\wedge\cdots:=\underline{s_{1}+1}\wedge\underline{s_{2}+1}\wedge\cdots$, $S=\set{s_{1}>s_{2}>\cdots}\in\mfrak{M}$. It is obvious that the translation operator is invertible. We further introduce operators $\alpha_{n}$, $n\in\mbb{Z}\backslash\set{0}$ by $\alpha_{n}=\sum_{r\in\mbb{Z}+\frac{1}{2}}\psi_{r-n}\psi^{\ast}_{r}$. Then, they satisfy commutation relations $[\alpha_{m},\alpha_{n}]=m\delta_{m+n,0}$, $m,n\in\mbb{Z}\backslash\set{0}$, which are identified with the Schur-limit $q\to t$ of the Heisenberg commutation relations in (\ref{eq:deformed_Heisenberg}) at $c=1$. Due to the boson-fermion correspondence (see e.g. \cite[Lecture 5]{KacRainaRozhkovskaya2013}), the fermion operators are recovered by means of these boson operators, the charge and the translation operators. In terms of generating series $\psi(z)=\sum_{r\in\mbb{Z}+\frac{1}{2}}\psi_{r}z^{r-\frac{1}{2}}$ and $\psi^{\ast}(z)=\sum_{r\in\mbb{Z}+\frac{1}{2}}\psi_{r}^{\ast}z^{-r-\frac{1}{2}}$, we have
\begin{align*}
	\psi (z)&=Tz^{C}\exp\left(\sum_{n>0}\frac{\alpha_{-n}}{n}z^{n}\right)\left(-\sum_{n>0}\frac{\alpha_{n}}{n}z^{-n}\right), \\
	\psi^{\ast} (z)&=T^{-1}z^{-C}\exp\left(-\sum_{n>0}\frac{\alpha_{-n}}{n}z^{n}\right)\left(\sum_{n>0}\frac{\alpha_{n}}{n}z^{-n}\right).
\end{align*}

To a partition $\lambda\in \mbb{Y}$, we associate a Maja diagram ${\bf M}(\lambda)=\Set{\lambda_{i}-i+\frac{1}{2}}_{i\ge 1}$. Then, we can see that $|{\bf M}(\lambda)_{+}|=|{\bf M}(\lambda)_{-}|$. In fact, the assignment $\lambda\mapsto {\bf M}(\lambda)$ is a bijection between $\mbb{Y}$ and $\mfrak{M}_{0}=\set{S\in\mfrak{M}| |S_{+}|=|S_{-}|}$. Furthermore, the charge-zero subspace $\Xi_{0}:=\Set{v\in\Xi|C\cdot v=0}$ is isomorphic to $\Lambda$, under which the vector $v_{{\bf M}(\lambda)}$ is identified with the Schur function $s_{\lambda}$ of the same partition.

For each $r=1,2,\dots$, we define operators on $\Xi$ by
\begin{align*}
	{\bf E}_{r}&:=\frac{t^{-r}}{r!}\int\left(\prod_{i=1}^{r}\frac{dz_{i}}{2\pi\sqrt{-1}}\right)\psi (z_{1})\cdots \psi (z_{r})\psi^{\ast}(t^{-1}z_{r})\cdots \psi^{\ast}(t^{-1}z_{1}), \\
	{\bf G}_{r}&:=\frac{(-t)^{-r}}{r!}\int\left(\prod_{i=1}^{r}\frac{dz_{i}}{2\pi\sqrt{-1}}\right)\psi^{\ast}(t^{-1}z_{1})\cdots \psi^{\ast}(t^{-1}z_{r})\psi (z_{r})\cdots \psi (z_{1}).
\end{align*}
Notice that these operators commute with the charge operator $C$, and hence, can be regarded as operators on $\Lambda$.
The operator ${\bf E}_{1}$ is essentially the same as ``the operator $\mcal{E}_{0}(z)$" in \cite{OkounkovPandharipande2006} and is the action of an element in (completion of) the $\mcal{W}_{1+\infty}$-algebra.

\begin{prop}
\label{prop:Schur-limit_Macdonald}
For every $r=1,2,\dots$, the operators $\what{E}_{r}$ and $\what{G}_{r}$ on $\Lambda$ reduce at the Schur-limit $q\to t$ to ${\bf E}_{r}$ and ${\bf G}_{r}$, respectively.
\end{prop}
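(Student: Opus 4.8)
The plan is to prove the identity on the charge-zero subspace $\Xi_{0}\simeq\Lambda$ by rewriting the fermionic integrands defining ${\bf E}_{r}$ and ${\bf G}_{r}$ through the boson-fermion correspondence and matching them term by term with the determinantal expressions for $\what{E}_{r}$ and $\what{G}_{r}$ furnished by Theorem \ref{thm:Macdonald_determinant} and Theorem \ref{thm:others_determinant}. At the Schur-limit the structure constants $\tfrac{1-q^{n}}{1-t^{n}}$ in (\ref{eq:deformed_Heisenberg}) tend to $1$, so the deformed bosons $a_{n}$ become the standard $\alpha_{n}$ with $[\alpha_{m},\alpha_{n}]=m\delta_{m+n,0}$; accordingly $\eta(z)$ specializes to $\exp\!\left(\sum_{n>0}\tfrac{1-t^{-n}}{n}\alpha_{-n}z^{n}\right)\exp\!\left(-\sum_{n>0}\tfrac{1-t^{n}}{n}\alpha_{n}z^{-n}\right)$. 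Since the numerical prefactors $t^{-r}/r!$, $(-t)^{-r}/r!$ and $(-1)^{r}/r!$ already agree between the two sides, it suffices to show that normal-ordering the fermion product yields precisely the determinantal integrand times $\no{\eta(z_{1})\cdots\eta(z_{r})}\big|_{q=t}$.

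First I would treat the single building block. Substituting the bosonized expressions for $\psi(z)$ and $\psi^{\ast}(z)$ and commuting the annihilation part of $\psi(z)$ past the creation part of $\psi^{\ast}(w)$ by the Baker--Campbell--Hausdorff formula, one finds $\psi(z)\psi^{\ast}(w)=\tfrac{z^{-1}(z/w)^{C}}{1-w/z}\,E_{-}(z)\tilde{E}_{-}(w)E_{+}(z)\tilde{E}_{+}(w)$, where $E_{\pm},\tilde{E}_{\pm}$ denote the creation/annihilation exponentials of $\psi,\psi^{\ast}$. Setting $w=t^{-1}z$ collapses the four exponentials to exactly $\eta(z)\big|_{q=t}$, and the scalar becomes $\tfrac{z^{-1}}{1-t^{-1}}\,t^{C}$. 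The decisive point is that on $\Xi_{0}$ one has $C=0$, so $t^{C}$ acts as the identity and the $r=1$ case already matches $\what{E}_{1}\big|_{q=t}$ on the nose; the same computation with the order reversed shows that the normal-ordered part of $\psi^{\ast}(t^{-1}z)\psi(z)$ is again $\eta(z)\big|_{q=t}$, as needed for ${\bf G}_{r}$.

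Next I would normal-order the full product of $2r$ fields. By iterated BCH, equivalently Wick's theorem for the free bosons underlying the fermions, the product equals the product of all pairwise contractions times the fully normal-ordered exponential, and the latter is $\no{\eta(z_{1})\cdots\eta(z_{r})}\big|_{q=t}$ by the single-block computation. The contractions among the $\psi$'s contribute Vandermonde factors $\prod_{i<j}(1-z_{j}/z_{i})$, those among the $\psi^{\ast}$'s contribute the analogous factors in the $w_{j}=t^{-1}z_{j}$, and the $r^{2}$ mixed $\psi$-$\psi^{\ast}$ contractions contribute $\prod_{i,j}\tfrac{1}{1-w_{j}/z_{i}}$. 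By the Cauchy determinant formula, already invoked in the proof of Theorem \ref{thm:Macdonald_determinant}, this product of scalar factors reassembles into $\det\!\left(\tfrac{1}{z_{i}-t^{-1}z_{j}}\right)_{1\le i,j\le r}$, reproducing the integrand of $\what{E}_{r}$; the reversed ordering in ${\bf G}_{r}$ instead produces poles at $z_{i}=tz_{j}$ and hence $\det\!\left(\tfrac{1}{z_{i}-tz_{j}}\right)$, matching $\what{G}_{r}\big|_{q=t}$.

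The main obstacle will be the careful bookkeeping of the charge operators $z_{i}^{C}$ and the translation operators $T^{\pm1}$ as they are commuted through the product: one must verify that the $r$ factors of $T$ and $r$ factors of $T^{-1}$ cancel, that the residual charge operator is a power of $t^{C}$ which trivializes on $\Xi_{0}$, and that the monomial scalars generated by relations such as $z^{C}T^{-1}=T^{-1}z^{-1}z^{C}$ combine with the Vandermonde factors to yield \emph{exactly} the Cauchy determinant with no spurious constant---a consistency already guaranteed by the prior agreement of the overall prefactors. A companion subtlety is that the noncommutative ordering of the fermion fields dictates the expansion of each $\tfrac{1}{1-w_{j}/z_{i}}$ in $\mbb{F}[w]((z^{-1}))$, so one must check that the operator orderings in ${\bf E}_{r}$ and ${\bf G}_{r}$ are compatible with the formal-series convention $\tfrac{1}{z-\gamma w}\neq-\tfrac{1}{\gamma w-z}$ fixed for the determinantal integrands.
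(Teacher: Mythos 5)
Your proposal is correct and takes essentially the same route as the paper's proof: bosonize $\psi(z)$, $\psi^{\ast}(w)$, normal-order the product of $2r$ fermion fields to obtain $\det\bigl(\tfrac{1}{z_{i}-t^{-1}z_{j}}\bigr)_{1\le i,j\le r}\,t^{rC}\no{\eta^{\mrm{S}}(z_{1})\cdots\eta^{\mrm{S}}(z_{r})}$ (the paper compresses your iterated BCH contractions plus the Cauchy determinant identity into a single appeal to ``the Wick formula''), note that $t^{rC}$ acts as the identity on $\Xi_{0}$, and compare with Theorems \ref{thm:Macdonald_determinant} and \ref{thm:others_determinant}. One small caveat subsumed by your final bookkeeping step: for ${\bf G}_{r}$ the prefactors $(-t)^{-r}/r!$ and $(-1)^{r}/r!$ do \emph{not} agree outright--the residual $t^{r}$ produced when commuting the charge operators through $T^{\pm 1}$ is exactly what reconciles them, so that check is genuinely needed rather than ``already guaranteed,'' but it does go through.
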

\begin{proof}
We only show the case of $\what{E}_{r}$ since that of $\what{G}_{r}$ is shown in a parallel argument.
We write the Schur-limit of the vertex operator $\eta (z)$ as
\begin{equation*}
	\eta^{\mrm{S}}(z)=\exp\left(\sum_{n>0}\frac{1-t^{-n}}{n}\alpha_{-n}z^{n}\right)\exp\left(-\sum_{n>0}\frac{1-t^{n}}{n}\alpha_{n}z^{-n}\right).
\end{equation*}
Then the normally ordered product of $\eta^{\mrm{S}}(z)$ makes sense in exactly the same manner as in Definition \ref{defn:normal_order_vertex_operator}. Due to the Wick formula, we have
\begin{equation*}
	\psi (z_{1})\cdots \psi (z_{r})\psi^{\ast}(t^{-1}z_{r})\cdots \psi^{\ast}(t^{-1}z_{1})=\det\left(\frac{1}{z_{i}-t^{-1}z_{j}}\right)_{1\le i,j\le r}t^{rC}\no{\eta^{\mrm{S}}(z_{1})\cdots \eta^{\mrm{S}}(z_{r})}.
\end{equation*}
Notice that the part $t^{rC}$ acts as unity restricted on the charge-zero subspace $\Xi_{0}$. We can see the desired result by comparing this with the expression in Theorem \ref{thm:Macdonald_determinant}.
\end{proof}

This result suggests that we may understand the free field realization $\what{E}_{r}$ and $\what{G}_{r}$, $r=1,2,\dots$ as the result of (1) rearranging the product of fermionic fields into the normal order in the bosonic basis, (2) deforming $\eta^{\mrm{S}}(z)$ to $\eta(z)$, or equivalently, $\alpha_{m}$ to $a_{m}$, $m\in\mbb{Z}\backslash\set{0}$, and (3) taking the residues. The determinants appearing in the expression of $\what{E}_{r}$ and $\what{G}_{r}$, $r=1,2,\dots$ have their origins at the step (1) of this procedure. A similar interpretation has been given in \cite{Prochazka2019} to the Nazarov--Sklyanin operator in the Jack case \cite{NazarovSklyanin2013a}.

We can directly compute the eigenvalues of ${\bf E}_{r}$, $r=1,2,\dots$ on the Schur basis. In fact, evaluating the residues, we have
\begin{equation*}
	{\bf E}_{r}=\frac{t^{rC}}{r!}\sum_{s_{1},\dots, s_{r}\in\mbb{Z}+\frac{1}{2}}t^{\sum_{i=1}^{r}\left(s_{i}-\frac{1}{2}\right)}\psi_{s_{1}}\cdots \psi_{s_{r}}\psi^{\ast}_{s_{r}}\cdots \psi^{\ast}_{s_{1}}.
\end{equation*}
From the very definition of a vector $v_{{\bf M}(\lambda)}$ for a partition $\lambda\in\mbb{Y}$, it follows that
\begin{equation*}
	{\bf E}_{r}v_{{\bf M}(\lambda)}=\frac{1}{r!}\sum_{\substack{i_{1},\dots,i_{r}\\ \mrm{distinct}}}t^{\sum_{j=1}^{r}\left(\lambda_{i_{j}}-i_{j}\right)}v_{{\bf M}(\lambda)}=e_{r}(t^{\lambda-\delta})v_{{\bf M}(\lambda)},
\end{equation*}
where $t^{\lambda-\delta}$ is the specialization defined by $x_{i}\mapsto t^{\lambda_{i}-i}$, $i\ge 1$. Note that the eigenvalue is the Schur limit of $e_{r}(q^{\lambda}t^{-\delta})$ as was expected.
As for the operator ${\bf G}_{r}$, we have
\begin{equation*}
	{\bf G}_{r}=\frac{(-1)^{r}}{r!}\sum_{s_{1},\dots, s_{r}\in\mbb{Z}+\frac{1}{2}}t^{\sum_{i=1}^{r}\left(s_{i}-\frac{1}{2}\right)}\psi^{\ast}_{s_{1}}\cdots \psi^{\ast}_{s_{r}}\psi_{s_{r}}\cdots \psi_{s_{1}},
\end{equation*}
and hence, ${\bf G}_{r}v_{{\bf M}(\lambda)}=(-1)^{r}e_{r}(t^{-\pr{\lambda}+\delta-1})v_{{\bf M}(\lambda)}$, $\lambda\in\mbb{Y}$, where $\pr{\lambda}$ is the transpose of $\lambda$ and the specialization is $x_{i}\mapsto t^{-\pr{\lambda}+i-1}_{i}$, $i\ge 1$. The eigenvalue can be shown to coincide with the Schur-limit of $g_{r}(q^{\lambda}t^{-\delta};q,t)$ due to the following lemma that is a special case of \cite[Lemma 3.27]{FeiginHashizumeHoshinoShiraishiYanagida2009}:
\begin{lem}
For each $\lambda\in\mbb{Y}$ and $r=1,2,\dots$, the identity $h_{r}(t^{\lambda-\delta})=(-1)^{r}e_{r}(t^{-\pr{\lambda}+\delta-1})$ holds, where $h_{r}=\lim_{q\to t}g_{r}(q,t)$ is the $r$-th complete symmetric function.
\end{lem}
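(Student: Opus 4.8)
The plan is to reduce the identity to a statement about power sums and then to a combinatorial partition of $\mbb{Z}$ already implicit in the Maja-diagram formalism of this subsection. Recall the classical generating-function relations $\sum_{r\ge 0}h_{r}u^{r}=\exp\left(\sum_{r\ge 1}\frac{p_{r}}{r}u^{r}\right)$ and $\sum_{r\ge 0}e_{r}u^{r}=\exp\left(\sum_{r\ge 1}(-1)^{r-1}\frac{p_{r}}{r}u^{r}\right)$. Writing $H(u)=\sum_{r}h_{r}(t^{\lambda-\delta})u^{r}$ and $E(u)=\sum_{r}e_{r}(t^{-\pr{\lambda}+\delta-1})u^{r}$, the desired identity $h_{r}(t^{\lambda-\delta})=(-1)^{r}e_{r}(t^{-\pr{\lambda}+\delta-1})$ for all $r$ is equivalent to $H(u)=E(-u)$. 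Taking logarithms, this holds if and only if
\begin{equation*}
	p_{r}(t^{\lambda-\delta})=-p_{r}(t^{-\pr{\lambda}+\delta-1}),\ \ r=1,2,\dots,
\end{equation*}
as rational functions of $t$. First I would record this reduction, so that the problem becomes one purely about power sums.

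Next I would set up the relevant partition of $\mbb{Z}$. From the Maja diagram ${\bf M}(\lambda)=\Set{\lambda_{i}-i+\frac{1}{2}}_{i\ge 1}$ and its complement in $\mbb{Z}+\frac{1}{2}$, one has the disjoint decomposition
\begin{equation*}
	\mbb{Z}=A\sqcup B,\ \ A:=\set{\lambda_{i}-i:i\ge 1},\ \ B:=\set{j-1-\pr{\lambda}_{j}:j\ge 1},
\end{equation*}
which is the shift by $-\frac{1}{2}$ of the particle/hole partition used above (equivalently, it encodes that each unit edge of the boundary lattice path of $\lambda$ is either horizontal or vertical). The set $A$ is bounded above, with tail $\set{-i:i>\ell(\lambda)}$, while $B$ is bounded below, with tail $\set{j-1:j>\lambda_{1}}$.

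Then I would compute the two power sums as the regularized geometric series dictated by the defining specializations. By definition $p_{r}(t^{\lambda-\delta})=\sum_{a\in A}t^{ra}$, where the negative tail is summed as $\sum_{i>\ell(\lambda)}t^{-ri}=\frac{t^{-r(\ell(\lambda)+1)}}{1-t^{-r}}$ (a series in $t^{-1}$), while $p_{r}(t^{-\pr{\lambda}+\delta-1})=\sum_{b\in B}t^{rb}$, whose positive tail is summed as $\frac{t^{r\lambda_{1}}}{1-t^{r}}$ (a series in $t$). The key numerical fact is that the two regularizations are opposite and cancel on the whole line: splitting $\mbb{Z}=\mbb{Z}_{\ge 0}\sqcup\mbb{Z}_{<0}$ gives $\sum_{c\ge 0}t^{rc}=\frac{1}{1-t^{r}}$ and $\sum_{c<0}t^{rc}=\frac{t^{-r}}{1-t^{-r}}$, whose sum vanishes as a rational function. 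Because $A$ contains all sufficiently negative integers and $B$ all sufficiently positive ones, the partition $\mbb{Z}=A\sqcup B$ refines the splitting into nonnegative and negative parts compatibly with these two regularizations, and regrouping the resulting finitely many rational functions yields
\begin{equation*}
	p_{r}(t^{\lambda-\delta})+p_{r}(t^{-\pr{\lambda}+\delta-1})=\sum_{a\in A}t^{ra}+\sum_{b\in B}t^{rb}=\sum_{c\ge 0}t^{rc}+\sum_{c<0}t^{rc}=0.
\end{equation*}
This establishes the power-sum identity, and hence the lemma.

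The hard part is not any single computation but keeping the two expansion directions straight: both specializations are honest elements of $\mbb{F}$, yet $p_{r}(t^{\lambda-\delta})$ arises by summing a tail in $t^{-1}$ and $p_{r}(t^{-\pr{\lambda}+\delta-1})$ by summing one in $t$, so the cancellation ``$\sum_{c\in\mbb{Z}}t^{rc}=0$'' must be read as an identity of rational functions rather than of convergent series. I would therefore stress that, once the two geometric summations are performed, every quantity in sight is a fixed rational function of $t$, so the regrouping above is a legitimate finite manipulation. As a safeguard I would also check the closed forms against the explicit expression $p_{r}(t^{\lambda-\delta})=\sum_{i=1}^{\ell(\lambda)}t^{r(\lambda_{i}-i)}+\frac{t^{-r(\ell(\lambda)+1)}}{1-t^{-r}}$ and its counterpart for $\pr{\lambda}$, verifying the cancellation directly for small partitions such as $\lambda=(1)$ and $\lambda=(2,1)$.
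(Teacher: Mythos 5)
Your proof is correct, but it follows a genuinely different route from the paper's. The paper never passes to power sums: it manipulates the generating function directly, writing $\sum_{r\ge 0}h_{r}(t^{\lambda-\delta})u^{r}=\prod_{i\ge 1}(1-t^{\lambda_{i}-i}u)^{-1}$ as the ratio $\prod_{i\ge 1}(t^{\lambda_{i}-i+1}u;t)_{\infty}/(t^{\lambda_{i}-i}u;t)_{\infty}$ and telescoping, so that after the finitely many nontrivial factors $(t^{\lambda_{i+1}-i}u;t)_{\lambda_{i}-\lambda_{i+1}}$ recombine with the prefactor $(t^{\lambda_{1}}u;t)_{\infty}$ the whole expression is recognized as $\prod_{i\ge 1}(1-t^{-\pr{\lambda}_{i}+i-1}u)=\sum_{r\ge 0}e_{r}(t^{-\pr{\lambda}+\delta-1})(-u)^{r}$; comparing coefficients finishes the proof. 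You instead take logarithms, reduce the lemma to the power-sum identity $p_{r}(t^{\lambda-\delta})=-p_{r}(t^{-\pr{\lambda}+\delta-1})$, and deduce that from the particle--hole decomposition $\mbb{Z}=\{\lambda_{i}-i\}\sqcup\{j-1-\pr{\lambda}_{j}\}$ together with the rational-function cancellation $\frac{1}{1-t^{r}}+\frac{t^{-r}}{1-t^{-r}}=0$; I checked that your regrouping is legitimate (the sets $A\cap\mbb{Z}_{\ge 0}$ and $B\cap\mbb{Z}_{<0}$ are finite, and removing finitely many terms from either regularized geometric tail only subtracts rational functions), and that your closed forms for the tails agree with the paper's definition of the specializations. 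Both arguments ultimately rest on the same combinatorial fact, namely that the exponent set $\{-\pr{\lambda}_{i}+i-1\}$ is the complement in $\mbb{Z}$ of $\{\lambda_{i}-i\}$: the paper produces it implicitly through the Pochhammer telescoping, while you invoke it up front. What your route buys is conceptual transparency --- it ties the lemma explicitly to the Maya-diagram formalism of this very subsection, and your emphasis that the two sums are expanded in opposite directions ($t^{-1}$ versus $t$) yet both land in $\mbb{C}(t)$ is precisely the point that must be made and is made correctly. What the paper's route buys is self-containedness: it needs neither the logarithmic reduction nor the partition of $\mbb{Z}$ as an external input (that partition is classical but is nowhere proved in the paper, so in your write-up it deserves at least the lattice-path justification you sketch), and it delivers the product formula in a single chain of equalities.
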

\begin{proof}
The generating series of the complete symmetric functions reads $\sum_{r=0}^{\infty}h_{r}(X)u^{r}=\prod_{i\ge 1}(1-x_{i}u)^{-1}$, where we set $h_{0}=1$.
Hence, for each $\lambda\in\mbb{Y}$, we have
\begin{equation*}
	\sum_{r=0}^{\infty}h_{r}(q^{\lambda-\delta})u^{r}=\prod_{i\ge 1}\frac{1}{1-t^{\lambda_{i}-i}u}=\prod_{i\ge 1}\frac{(t^{\lambda_{i}-i+1}u;t)_{\infty}}{(t^{\lambda_{i}-i}u;t)_{\infty}}
	=(t^{\lambda_{1}}u;t)_{\infty}\prod_{i\ge 1}\frac{(t^{\lambda_{i+1}-i}u;t)_{\infty}}{(t^{\lambda_{i}-i}u;t)_{\infty}}.
\end{equation*}
Here, each factor of the product $\frac{(t^{\lambda_{i+1}-i}u;t)_{\infty}}{(t^{\lambda_{i}-i}u;t)_{\infty}}=(t^{\lambda_{i+1}-i}u;t)_{\lambda_{i}-\lambda_{i+1}}$ is unity unless $\lambda_{i}>\lambda_{i+1}$. Hence,
\begin{equation*}
	\sum_{r=0}^{\infty}h_{r}(q^{\lambda-\delta})u^{r}=\prod_{i\ge 1}(1-t^{-\pr{\lambda}_{i}+i-1}u)=\sum_{r=0}^{\infty}e_{r}(t^{-\pr{\lambda}+\delta-1})(-u)^{r}
\end{equation*}
implying the desired identities.
\end{proof}

\section{Applications}
\label{sect:expectation}

In this section, we give applications of Theorem \ref{thm:correlation_operator}. We have considered four series of operators; $E_{r}$, $E_{r}(q^{-1},t^{-1})$, $G_{r}$, and $G_{r}(q^{-1},t^{-1})$, $r=1,2,\dots$, which are diagonalized by the Macdonald symmetric functions. Correspondingly, we have four series of observables for the Macdonald process, for which we compute the correlation functions below.

\subsection{Preliminaries}
As preliminaries, we introduce an expectation value of a generating function of random variables and fix the notion of the formal Fredholm determinant.
\begin{defn}
\label{defn:expectation_generating_function}
Let $f_{n}:\mbb{Y}\to\mbb{F}$, $n=0,1,\dots$ be random variables
and let $F(u)=\sum_{n=0}^{\infty}f_{n}u^{n}$ with $u$ being a formal variable be a generating function of them.
Then the expectation value of $F(u)$ with respect to the Macdonald measure is given by
\begin{equation*}
	\mbb{E}_{q,t}[F(u)]:=\sum_{n=0}^{\infty}\mbb{E}_{q,t}[f_{n}]u^{n}\in\mbb{F}[[u]].
\end{equation*}
\end{defn}

\begin{defn}
\label{defn:Fredholm_determinant}
Let $K(z,w)\in \mbb{F}[[z,z^{-1},w,w^{-1}]]$ be a formal power series in two variables
and let $u$ be another formal variable.
Then we define the formal Fredholm determinant $\det (I+uK)\in \mbb{F}[[u]]$ by
\begin{equation*}
	\det (I+uK):=1+\sum_{r=1}^{\infty}\frac{u^{r}}{r!}\int \left(\prod_{i=1}^{r}\frac{dz_{i}}{2\pi\sqrt{-1}}\right)\det [K(z_{i},z_{j})]_{1\le i,j\le r}
\end{equation*}
if it exists.
\end{defn}

\subsection{The first series observables}
\label{subsect:first_series}
The first series of observables we consider is
\begin{equation*}
	\mcal{E}_{r}:\mbb{Y}\to\mbb{F};\ \ \mcal{E}_{r}(\lambda):=e_{r}(q^{\lambda}t^{-\delta+1}),\ \ r=1,2,\dots,
\end{equation*}
Let us also introduce their generating function as follows.
\begin{equation*}
	E^{\mcal{E}}(u)=E^{\mcal{E}}(\cdot,u):=\sum_{r=0}^{\infty}\mcal{E}_{r}(\cdot)u^{r};\ \ E^{\mcal{E}}(\lambda,u)=\prod_{i\ge 1}(1+q^{\lambda_{i}}t^{-i+1}u),\ \ \lambda\in\mbb{Y},
\end{equation*}
with $\mcal{E}_{0}=1$.
Note that these observables are different from those represented by the same symbols in \cite{BorodinCorwinGorinShakirov2016}, which are the same as the observables $\pr{\mcal{E}}_{r}$, $r=1,2,\dots$ in the next Subsection \ref{subsect:second_series}.

\begin{thm}
\label{thm:correlation_sigma}
Let $N\in\{1,2,\dots\}$ and $r_{1},\dots, r_{N}\in \{1,2,\dots\}$.
The correlation function of $\mcal{E}_{r_{1}},\dots, \mcal{E}_{r_{N}}$ with respect to the $N$-step Macdonald process becomes
\begin{align*}
	&\mbb{E}^{N}_{q,t}[\mcal{E}_{r_{1}}[1]\cdots \mcal{E}_{r_{N}}[N]]\\
	&=\frac{1}{\prod_{\alpha=1}^{N}r_{\alpha}!}\int \prod_{\alpha=1}^{N}\left(\prod_{i=1}^{r_{\alpha}}\frac{dw^{(\alpha)}_{i}}{2\pi\sqrt{-1}}\right)\det\left(\frac{1}{w^{(\alpha)}_{i}-t^{-1}w^{(\alpha)}_{j}}\right)_{1\le i,j\le r_{\alpha}} \\
	&\hspace{50pt} \times \prod_{1\le \alpha\le\beta\le N}\left(\prod_{i=1}^{r_{\beta}}H(w^{(\beta)}_{i};X^{(\alpha)})^{-1}\right)\left(\prod_{i=1}^{r_{\alpha}}H((tw^{(\alpha)}_{i})^{-1};Y^{(\beta)})^{-1}\right) \\
	&\hspace{50pt} \times \prod_{1\le \alpha <\beta\le N}W(\bm{w}^{(\alpha)};\bm{w}^{(\beta)}).
\end{align*}
Here we set
\begin{equation*}
	H(w;X)=\prod_{i\ge 1}\frac{1-tx_{i}w}{1-x_{i}w},
\end{equation*}
and
\begin{equation*}
	W(\bm{w}^{(\alpha)};\bm{w}^{(\beta)})=\prod_{i=1}^{r_{\alpha}}\prod_{j=1}^{r_{\beta}}\frac{(1-w^{(\beta)}_{j}/w^{(\alpha)}_{i})(1-qt^{-1}w^{(\beta)}_{j}/w^{(\alpha)}_{i})}{(1-qw^{(\beta)}_{j}/w^{(\alpha)}_{i})(1-t^{-1}w^{(\beta)}_{j}/w^{(\alpha)}_{i})}.
\end{equation*}
\end{thm}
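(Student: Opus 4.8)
The plan is to combine Theorem~\ref{thm:correlation_operator} with the determinantal free field realization of Theorem~\ref{thm:Macdonald_determinant}, thereby reducing the correlation function to a vacuum expectation value of a product of vertex operators $\eta$ and the operators $\Gamma(\cdot)_{\pm}$, which I then evaluate by normal ordering. The first step is to identify the operator attached to the observable $\mcal{E}_{r}$. Because the specialization $q^{\lambda}t^{-\delta+1}$ is obtained from $q^{\lambda}t^{-\delta}$ by the uniform rescaling $x_{i}\mapsto tx_{i}$ — one checks $p_{n}(q^{\lambda}t^{-\delta+1})=t^{n}p_{n}(q^{\lambda}t^{-\delta})$ straight from the definition of the specializations — homogeneity of $e_{r}$ gives $\mcal{E}_{r}(\lambda)=e_{r}(q^{\lambda}t^{-\delta+1})=t^{r}e_{r}(q^{\lambda}t^{-\delta})$. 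Since $\what{E}_{r}$ is diagonalized by the Macdonald basis with eigenvalue $e_{r}(q^{\lambda}t^{-\delta})$ and the families $\{\ket{P_{\lambda}}\}$, $\{\bra{Q_{\lambda}}\}$ are dual, it follows that $\mcal{O}(\mcal{E}_{r})=t^{r}\what{E}_{r}$, and Theorem~\ref{thm:Macdonald_determinant} then yields
\[
\mcal{O}(\mcal{E}_{r})=t^{r}\what{E}_{r}=\frac{1}{r!}\int D^{r}_{t^{-1}}z\,\no{\eta(z_{1})\cdots \eta(z_{r})}.
\]

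Substituting this into $\psi_{\mcal{E}_{r_{\alpha}}}^{X^{(\alpha)},Y^{(\alpha)}}=\Gamma(Y^{(\alpha)})_{+}\mcal{O}(\mcal{E}_{r_{\alpha}})\Gamma(X^{(\alpha)})_{-}$ and applying Theorem~\ref{thm:correlation_operator}, I would pull all the residue integrals to the front, so that the numerator equals $\tfrac{1}{\prod_{\alpha}r_{\alpha}!}\int\prod_{\alpha}D^{r_{\alpha}}_{t^{-1}}w^{(\alpha)}$ multiplied by the vacuum expectation value of the blocks $\Gamma(Y^{(\alpha)})_{+}\no{\prod_{i}\eta(w^{(\alpha)}_{i})}\Gamma(X^{(\alpha)})_{-}$ written from left to right for $\alpha=N,N-1,\dots,1$. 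This expectation value is computed by the standard Wick/BCH procedure: one moves every creation part to the left and every annihilation part to the right, so that the fully normally ordered remainder contributes $1$, while each pair consisting of an annihilation operator standing to the left of a creation operator contributes its contraction factor. Using the relations (\ref{eq:deformed_Heisenberg}), the four relevant contractions are: $\Gamma(Y)_{+}$ past $\Gamma(X)_{-}$, giving $\Pi(X,Y;q,t)$; $\Gamma(Y)_{+}$ past $\eta(w)_{-}$, giving $\prod_{j}\frac{1-t^{-1}y_{j}w}{1-y_{j}w}$; $\eta(w)_{+}$ past $\Gamma(X)_{-}$, giving $\prod_{k}\frac{1-x_{k}/w}{1-tx_{k}/w}$; and $\eta(z)_{+}$ past $\eta(w)_{-}$, giving $\frac{(1-w/z)(1-qt^{-1}w/z)}{(1-qw/z)(1-t^{-1}w/z)}$.

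The decisive bookkeeping is to read off from the fixed left-to-right order — blocks running $N,\dots,1$, and within each block $\Gamma(Y)_{+}$ preceding the $\eta$'s which precede $\Gamma(X)_{-}$ — exactly which pairs contract. A given pair contracts precisely when the index of the annihilation factor is $\ge$ the index of the creation factor; in particular there are no contractions inside a single $\no{\cdots}$, consistently with the normal ordering. The purely scalar part then assembles into $\prod_{\alpha\ge\beta}\Pi(X^{(\beta)},Y^{(\alpha)})$, which is exactly the denominator produced by the $\psi_{1}$'s in Theorem~\ref{thm:correlation_operator}, so it cancels. What remains is an integral over the $w^{(\alpha)}_{i}$ of the determinantal kernels $\det(\tfrac{1}{w^{(\alpha)}_{i}-t^{-1}w^{(\alpha)}_{j}})$ times the collected $H$- and $W$-type factors, but with arguments such as $H(w^{-1};\cdot)$ and with ranges ordered by $\alpha\ge\beta$. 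I would remove this mismatch by the uniform inversion $w^{(\alpha)}_{i}\mapsto(w^{(\alpha)}_{i})^{-1}$, under which each functional $\int D^{r_{\alpha}}_{t^{-1}}w^{(\alpha)}$ is invariant by Lemma~\ref{lem:measure_invariance}; a direct check shows the inversion sends $\prod_{j}\frac{1-t^{-1}y_{j}w}{1-y_{j}w}$ to $H((tw)^{-1};Y)^{-1}$, sends $\prod_{k}\frac{1-x_{k}/w}{1-tx_{k}/w}$ to $H(w;X)^{-1}$, and sends $W(\bm{w}^{(\alpha)};\bm{w}^{(\beta)})$ to $W(\bm{w}^{(\beta)};\bm{w}^{(\alpha)})$, thereby converting the $\alpha\ge\beta$ ranges into the $\alpha\le\beta$ ranges of the statement. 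Collecting the surviving factors reproduces the claimed formula.

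The part I expect to be the \emph{main obstacle} is precisely this combinatorial bookkeeping together with the expansion conventions. One must pin down, for each of the four contraction types, the correct index range ($\alpha\ge\beta$ versus $\alpha\le\beta$) forced by the ordering, and simultaneously respect the prescription (stated after Theorem~\ref{thm:Macdonald_determinant}) for expanding the rational contraction factors, e.g.\ $\frac{1}{z-\gamma w}\in\mbb{F}[w]((z^{-1}))$, since this is what makes the residue extraction against the determinant well defined. Matching the naturally produced arguments $H(w^{-1};\cdot)$ and the reversed $W$-ordering to the stated form is not automatic; recognizing that the inversion invariance of Lemma~\ref{lem:measure_invariance} is the tool that reconciles them — rather than attempting to force the factors into shape by hand — is the conceptual crux of the argument.
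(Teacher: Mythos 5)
Your proposal is correct and follows essentially the same route as the paper's own proof: the identification $\mcal{O}(\mcal{E}_{r})=t^{r}\what{E}_{r}$ via Theorem \ref{thm:Macdonald_determinant}, substitution into Theorem \ref{thm:correlation_operator}, the normal-ordering computation with exactly the four contraction factors you list (yielding the $\Pi$-, $H$- and $W$-factors over the ranges $\alpha\le\beta$ resp.\ $\alpha<\beta$, with the $\Pi$-product cancelling against the normalization), and the final uniform inversion $w^{(\alpha)}_{i}=(z^{(\alpha)}_{i})^{-1}$ justified by Lemma \ref{lem:measure_invariance} together with the reversal property of $W$. All four contractions and the resulting index bookkeeping match the paper's computation, so there is no gap.
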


\begin{proof}
We use Theorem \ref{thm:correlation_operator}.
Since $e_{r}(q^{\lambda}t^{-\delta+1})=t^{r}e_{r}(q^{\lambda}t^{-\delta})$, we have from Theorem \ref{thm:Macdonald_determinant}, for $r=1,2,\dots$,
\begin{equation*}
	\mcal{O}(\mcal{E}_{r})=t^{r}\what{E}_{r}=\frac{1}{r!}\int \left(\prod_{i=1}^{r}\frac{dz_{i}}{2\pi\sqrt{-1}}\right)\det\left(\frac{1}{z_{i}-t^{-1}z_{j}}\right)_{1\le i,j\le r} \no{\eta (z_{1})\cdots \eta (z_{r})}.
\end{equation*}
Let $r_{1},\dots, r_{N}\in \{1,2,\dots\}$
We shall compute the unnormalized correlation function
\begin{align*}
	&\braket{0|\psi^{X^{(N)},Y^{(N)}}_{\mcal{E}_{r_{N}}}\cdots \psi^{X^{(1)},Y^{(1)}}_{\mcal{E}_{r_{1}}}|0} \\
	&=\frac{1}{\prod_{\alpha=1}^{N}r_{\alpha}!}\int \prod_{\alpha=1}^{N}\left(\prod_{i=1}^{r_{\alpha}}\frac{dz^{(\alpha)}_{i}}{2\pi\sqrt{-1}}\right)\det\left(\frac{1}{z^{(\alpha)}_{i}-t^{-1}z^{(\alpha)}_{j}}\right)_{1\le i,j \le r_{\alpha}} \\
	&\hspace{50pt}\times \braket{0|\Gamma (Y^{(N)})_{+}\eta (\bm{z}^{(N)}) \Gamma (X^{(N)})_{-}\cdots \Gamma (Y^{(1)})_{+}\eta (\bm{z}^{(1)}) \Gamma (X^{(1)})_{-}|0},
\end{align*}
where we set $\eta(\bm{z}^{(\alpha)})=\no{\eta(z^{(\alpha)}_{1})\cdots \eta(z^{(\alpha)}_{r_{\alpha}})}$, $\alpha=1,\dots, N$.
To compute the matrix element, we write
\begin{equation*}
	\eta(\bm{z}^{(\alpha)})=\eta(\bm{z}^{(\alpha)})_{-}\eta(\bm{z}^{(\alpha)})_{+},\ \ \alpha=1,\dots, N,
\end{equation*}
where
\begin{align*}
	\eta(\bm{z}^{(\alpha)})_{+}&:=\exp\left(-\sum_{n>0}\frac{1-t^{n}}{n}a_{n}\sum_{i=1}^{r_{\alpha}}(z^{(\alpha)}_{i})^{-n}\right),\\
	\eta(\bm{z}^{(\alpha)})_{-}&:=\exp\left(\sum_{n>0}\frac{1-t^{-n}}{n}a_{-n}\sum_{i=1}^{r_{\alpha}}(z^{(\alpha)}_{i})^{n}\right).
\end{align*}
Then we can see that
\begin{align*}
	&\Gamma (Y^{(\alpha)})_{+}\eta(\bm{z}^{(\alpha)})\Gamma (X^{(\alpha)})_{-}\\
	&=\exp\left(\sum_{n>0}\frac{1-t^{-n}}{n}p_{n}(Y^{(\alpha)})\sum_{i=1}^{r_{\alpha}}(z^{(\alpha)}_{i})^{n}\right)
		\exp\left(-\sum_{n>0}\frac{1-t^{n}}{n}p_{n}(X^{(\alpha)})\sum_{i=1}^{r_{\alpha}}(z^{(\alpha)}_{i})^{-n}\right)\\
	&\hspace{50pt}\times \eta (\bm{z}^{(\alpha)})_{-}\Gamma (Y^{(\alpha)})_{+}\Gamma (X^{(\alpha)})_{-}\eta (\bm{z}^{(\alpha)})_{+}\\
	&=\Pi (X^{(\alpha)},Y^{(\alpha)})\prod_{i=1}^{r_{\alpha}}H((z^{(\alpha)}_{i})^{-1};X^{(\alpha)})^{-1}H(t^{-1}z^{(\alpha)}_{i};Y^{(\alpha)})^{-1} \\
	&\hspace{50pt} \times \eta(\bm{z}^{(\alpha)})_{-}\Gamma (X^{(\alpha)})_{-}\Gamma (Y^{(\alpha)})_{+}\eta(\bm{z}^{(\alpha)})_{+}.
\end{align*}
Noting that the vertex operator $\eta(z)$ exhibits the following operator product expansion (OPE):
\begin{equation*}
	\eta(z)\eta(w)=\frac{(1-w/z)(1-qt^{-1}w/z)}{(1-qw/z)(1-t^{-1}w/z)}\no{\eta(z)\eta(w)}.
\end{equation*}
Then we obtain the following formula:
\begin{align*}
	&\Gamma (Y^{(\beta)})_{+}\eta (\bm{z}^{(\beta)})_{+}\eta(\bm{z}^{(\alpha)})_{-}\Gamma (X^{(\alpha)})_{-}\\
	&=\Pi (X^{(\alpha)},Y^{(\beta)})W(\bm{z}^{(\beta)},\bm{z}^{(\alpha)})\prod_{i=1}^{r_{\alpha}}H(t^{-1}z^{(\alpha)}_{i};Y^{(\beta)})^{-1}\prod_{i=1}^{r_{\beta}}H((z^{(\beta)}_{i})^{-1};X^{(\alpha)})^{-1}\\
	&\hspace{20pt}\times \eta (\bm{z}^{(\alpha)})_{-}\Gamma (X^{(\alpha)})_{-}\Gamma (Y^{(\beta)})_{+}\eta(\bm{z}^{(\beta)})_{+}.
\end{align*}

Combining the above formulas we can compute the matrix element as
\begin{align*}
	&\braket{0|\Gamma (Y^{(N)})_{+}\eta (\bm{z}^{(N)}) \Gamma (X^{(N)})_{-}\cdots \Gamma (Y^{(1)})_{+}\eta (\bm{z}^{(1)}) \Gamma (X^{(1)})_{-}|0}\\
	&=\prod_{1\le i\le j\le N}\Pi (X^{(i)},Y^{(j)})
		\prod_{1\le \alpha\le \beta\le N}\left(\prod_{i=1}^{r_{\alpha}}H(t^{-1}z^{(\alpha)}_{i};Y^{(\beta)})^{-1}\right)\left(\prod_{i=1}^{r_{\beta}}H((z^{(\beta)}_{i})^{-1};X^{(\alpha)})^{-1}\right)\\
	&\hspace{60pt}\times\prod_{1\le \alpha<\beta\le N}W(\bm{z}^{(\beta)},\bm{z}^{(\alpha)}).
\end{align*}
Therefore, we have
\begin{align*}
	&\mbb{E}^{N}_{q,t}[\mcal{E}_{r_{1}}[1]\cdots \mcal{E}_{r_{N}}[N]] \\
	&=\frac{1}{\prod_{i=1}^{N}r_{i}!}\int \prod_{\alpha=1}^{N}\left(\prod_{i=1}^{r_{\alpha}}\frac{dz^{(\alpha)}_{i}}{2\pi\sqrt{-1}}\right)
		\det\left(\frac{1}{z^{(\alpha)}_{i}-t^{-1}z^{(\alpha)}_{j}}\right)_{1\le i,j\le r_{\alpha}} \\
	&\hspace{60pt}\times\prod_{1\le \alpha\le \beta\le N}\left(\prod_{i=1}^{r_{\alpha}}H(t^{-1}z^{(\alpha)}_{i};Y^{(\beta)})^{-1}\right)\left(\prod_{i=1}^{r_{\beta}}H((z^{(\beta)}_{i})^{-1};X^{(\alpha)})^{-1}\right)\\
	&\hspace{60pt}\times\prod_{1\le \alpha<\beta\le N}W(\bm{z}^{(\beta)},\bm{z}^{(\alpha)}).
\end{align*}
Finally we adopt a transformation of integral variables so that $w^{(\alpha)}_{i}=(z^{(\alpha)}_{i})^{-1}$, $i=1,\dots, r_{\alpha}$, $\alpha=1,\dots, N$.
With help of Lemma \ref{lem:measure_invariance} and the property
\begin{equation*}
	W(z_{1}^{-1},\dots, z_{m}^{-1};w_{1}^{-1},\dots, w_{n}^{-1})=W(w_{1},\dots, w_{n};z_{1},\dots, z_{m}),
\end{equation*}
we obtain
\begin{align*}
	&\mbb{E}^{N}_{q,t}[\mcal{E}_{r_{1}}[1]\cdots \mcal{E}_{r_{N}}[N]] \\
	&=\frac{1}{\prod_{i=1}^{N}r_{i}!}\int \prod_{\alpha=1}^{N}\left(\prod_{i=1}^{r_{\alpha}}\frac{dw^{(\alpha)}_{i}}{2\pi\sqrt{-1}}\right)
		\det\left(\frac{1}{w^{(\alpha)}_{i}-t^{-1}w^{(\alpha)}_{j}}\right)_{1\le i,j\le r_{\alpha}} \\
	&\hspace{60pt}\times\prod_{1\le \alpha\le \beta\le N}\left(\prod_{i=1}^{r_{\beta}}H(w^{(\alpha)}_{i};X^{(\alpha)})^{-1}\right)\left(\prod_{i=1}^{r_{\alpha}}H((tw^{(\alpha)}_{i})^{-1};Y^{(\beta)})^{-1}\right)\\
	&\hspace{60pt}\times\prod_{1\le \alpha<\beta\le N}W(\bm{w}^{(\alpha)},\bm{w}^{(\beta)}).
\end{align*}
Then the proof is complete.
\end{proof}

In particular, when $N=1$, we have the following.
\begin{cor}
\label{cor:Fredholm_sigma}
Set
\begin{equation*}
	K^{\mcal{E}}(z,w):=\frac{1}{z-t^{-1}w}H(z;X)^{-1}H((tw)^{-1};Y)^{-1}.
\end{equation*}
Then we have
\begin{equation}
\label{eq:Fredholm_expectation_sigma}
	\mbb{E}_{q,t}[E^{\mcal{E}}(u)]=\det (I+uK^{\mcal{E}}).
\end{equation}
\end{cor}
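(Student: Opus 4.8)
The plan is to read off a formula for each coefficient $\mbb{E}_{q,t}[\mcal{E}_{r}]$ from the $N=1$ case of Theorem \ref{thm:correlation_sigma}, and then recognize the resulting generating series as the Fredholm-determinant expansion of Definition \ref{defn:Fredholm_determinant}. Setting $N=1$ in Theorem \ref{thm:correlation_sigma}, the product over $1\le\alpha<\beta\le N$ is empty and only the term $\alpha=\beta=1$ survives in the remaining product, so (suppressing the superscript $(1)$)
\begin{equation*}
	\mbb{E}_{q,t}[\mcal{E}_{r}]=\frac{1}{r!}\int\left(\prod_{i=1}^{r}\frac{dw_{i}}{2\pi\sqrt{-1}}\right)\det\left(\frac{1}{w_{i}-t^{-1}w_{j}}\right)_{1\le i,j\le r}\prod_{i=1}^{r}H(w_{i};X)^{-1}H((tw_{i})^{-1};Y)^{-1}.
\end{equation*}

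Next I would compute the matrix entries of the kernel. By definition,
\begin{equation*}
	K^{\mcal{E}}(w_{i},w_{j})=\frac{1}{w_{i}-t^{-1}w_{j}}H(w_{i};X)^{-1}H((tw_{j})^{-1};Y)^{-1},
\end{equation*}
so the factor $H(w_{i};X)^{-1}$ depends only on the row index $i$, while $H((tw_{j})^{-1};Y)^{-1}$ depends only on the column index $j$. By multilinearity of the determinant these factors pull out of each row and each column, giving
\begin{equation*}
	\det\left[K^{\mcal{E}}(w_{i},w_{j})\right]_{1\le i,j\le r}=\left(\prod_{i=1}^{r}H(w_{i};X)^{-1}H((tw_{i})^{-1};Y)^{-1}\right)\det\left(\frac{1}{w_{i}-t^{-1}w_{j}}\right)_{1\le i,j\le r},
\end{equation*}
which is exactly the integrand in the formula for $\mbb{E}_{q,t}[\mcal{E}_{r}]$ above.

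Finally I would assemble the generating series. Multiplying the previous display by $u^{r}/r!$, applying $\int(\prod_{i}\frac{dw_{i}}{2\pi\sqrt{-1}})$, and summing over $r\ge 1$—together with the $r=0$ term, which equals $1=\mbb{E}_{q,t}[\mcal{E}_{0}]$ since $\mcal{E}_{0}=1$—reproduces term by term the expansion of $\det(I+uK^{\mcal{E}})$ in Definition \ref{defn:Fredholm_determinant}. On the other hand, the left side equals $\sum_{r\ge 0}\mbb{E}_{q,t}[\mcal{E}_{r}]u^{r}=\mbb{E}_{q,t}[E^{\mcal{E}}(u)]$ by Definition \ref{defn:expectation_generating_function}, so the two sides agree as elements of $\mbb{F}[[u]]$, which is (\ref{eq:Fredholm_expectation_sigma}).

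There is no genuine obstacle here: the entire content is the multilinearity factorization that turns $\det[K^{\mcal{E}}(w_{i},w_{j})]$ into a product of $H$-factors times the Cauchy-type determinant $\det(1/(w_{i}-t^{-1}w_{j}))$ already present in Theorem \ref{thm:correlation_sigma}. The only points requiring minor care are the bookkeeping of the empty products arising at $N=1$ and the matching of the $r=0$ constant terms, both of which are immediate.
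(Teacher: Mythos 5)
Your proposal is correct and follows essentially the same route as the paper: specialize Theorem \ref{thm:correlation_sigma} to $N=1$, identify the integrand with $\det[K^{\mcal{E}}(w_{i},w_{j})]$, and conclude from Definitions \ref{defn:expectation_generating_function} and \ref{defn:Fredholm_determinant}. The only difference is that you spell out the row/column multilinearity factorization of the determinant, which the paper leaves implicit.
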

\begin{proof}
When $N=1$, Thorem \ref{thm:correlation_sigma} reduces to
\begin{align*}
	\mbb{E}_{q,t}[\mcal{E}_{r}]=\frac{1}{r!}\int \left(\prod_{i=1}^{r}\frac{dw_{i}}{2\pi\sqrt{-1}}\right)\det [K^{\mcal{E}}(w_{i},w_{j})]_{1\le i,j\le r},\ \ r=1,2,\dots.
\end{align*}
Therefore, the desired result follows from Definitions \ref{defn:expectation_generating_function} and \ref{defn:Fredholm_determinant}.
\end{proof}

\begin{rem}
In the case that $r_{1}=\cdots =r_{N}=1$, the result of Theorem \ref{thm:correlation_sigma}
is essentially equivalent to that of \cite[Theorem 6.1]{BorodinCorwinGorinShakirov2016}, where the authors considered a random variable $\what{\mcal{E}}_{1}$ defined by
\begin{equation}
\label{eq:first_Macdonald_eigenvalue_conventional}
	\what{\mcal{E}}_{1}(\lambda)=1+(1-t)\sum_{i\ge 1}(1-q^{\lambda_{i}})t^{-i}=(t-1)e_{1}(q^{\lambda}t^{-\delta}),\ \ \lambda\in\mbb{Y}.
\end{equation}
Hence, the corresponding operator is just $\mcal{O}(\what{\mcal{E}}_{1})=\int \frac{dz}{2\pi \sqrt{-1}z}\eta (z)$.
\end{rem}

\begin{rem}
The left hand side of Eq. (\ref{eq:Fredholm_expectation_sigma}) is intrinsically dependent on two parameters $q$ and $t$,
but in the right hand side of Eq. (\ref{eq:Fredholm_expectation_sigma}), it is manifest that it is independent of $q$. Note that this $q$-independence is equivalent to that observed in \cite[Chapter VI, Section 3]{Macdonald1995}.
\end{rem}

\subsection{The second series of observables}
\label{subsect:second_series}
The next observables we consider are defined by
\begin{equation*}
	\pr{\mcal{E}}_{r}:\mbb{Y}\to\mbb{F};\ \ \pr{\mcal{E}}_{r}(\lambda):=e_{r}(q^{-\lambda}t^{\delta-1}),\ \ r=1,2,\dots.
\end{equation*}
They are just obtained from $\mcal{E}_{r}$, $r=1,2,\dots$ by inverting parameters $q$ and $t$ of the values.
We also write the generating function of them with a formal variable $u$ as
\begin{equation*}
	E^{\pr{\mcal{E}}}(u)=E^{\pr{\mcal{E}}}(\cdot,u)=\sum_{r=0}^{\infty}\pr{\mcal{E}}_{r}(\cdot)u^{r};\ \ E^{\pr{\mcal{E}}}(\lambda,u)=\prod_{i\ge 1}(1+q^{-\lambda_{i}}t^{i-1}u),\ \ \lambda\in\mbb{Y},
\end{equation*}
with the convention $\pr{\mcal{E}}_{0}=1$.
As was mentioned in Subsection \ref{subsect:first_series}, the observables $\pr{\mcal{E}}_{r}$, $r=1,2,\dots$ are the same as the observables written as $\mcal{E}_{r}$, $r=1,2,\dots$ in \cite{BorodinCorwinGorinShakirov2016}. In fact, we recover \cite[Theorem 1.1]{BorodinCorwinGorinShakirov2016} in our free field approach as follows.

\begin{thm}
\label{thm:correlation_rho}
Let $N\in\{1,2,\dots\}$ and $r_{1},\dots, r_{N}\in \{1,2,\dots\}$.
The correlation function of $\pr{\mcal{E}}_{r_{1}},\dots, \pr{\mcal{E}}_{r_{N}}$ with respect to the $N$-step Macdonald process becomes
\begin{align*}
	&\mbb{E}^{N}_{q,t}[\pr{\mcal{E}}_{r_{1}}[1]\cdots \pr{\mcal{E}}_{r_{N}}[N]]\\
	&=\frac{1}{\prod_{\alpha=1}^{N}r_{\alpha}!}\int \prod_{\alpha=1}^{N}\left(\prod_{i=1}^{r_{\alpha}}\frac{dw^{(\alpha)}_{i}}{2\pi\sqrt{-1}}\right)\det\left(\frac{1}{w^{(\alpha)}_{i}-tw^{(\alpha)}_{j}}\right)_{1\le i,j\le r_{\alpha}} \\
	&\hspace{50pt} \times \prod_{1\le \alpha\le\beta\le N}\left(\prod_{i=1}^{r_{\beta}}H(w^{(\beta)}_{i};X^{(\alpha)})\right)\left(\prod_{i=1}^{r_{\alpha}}H((qw^{(\alpha)}_{i})^{-1};Y^{(\beta)})\right) \\
	&\hspace{50pt} \times \prod_{1\le \alpha <\beta\le N}\widetilde{W}(\bm{w}^{(\alpha)};\bm{w}^{(\beta)}).
\end{align*}
Here we set
\begin{equation*}
	\widetilde{W}(\bm{w}^{(\alpha)};\bm{w}^{(\beta)})=\prod_{i=1}^{r_{\alpha}}\prod_{j=1}^{r_{\beta}}\frac{(1-w^{(\beta)}_{j}/w^{(\alpha)}_{i})(1-q^{-1}tw^{(\beta)}_{j}/w^{(\alpha)}_{i})}{(1-q^{-1}w^{(\beta)}_{j}/w^{(\alpha)}_{i})(1-tw^{(\beta)}_{j}/w^{(\alpha)}_{i})}.
\end{equation*}
\end{thm}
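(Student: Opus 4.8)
The plan is to rerun the proof of Theorem~\ref{thm:correlation_sigma} with the vertex operator $\eta$ replaced by $\xi$, the Cauchy kernel $\frac{1}{z_{i}-t^{-1}z_{j}}$ replaced by $\frac{1}{z_{i}-tz_{j}}$, and $W$ replaced by $\widetilde{W}$; the only genuinely new bookkeeping is the half-integer shift $(t/q)^{n/2}$ carried by $\xi$. First I would identify the operator $\mcal{O}(\pr{\mcal{E}}_{r})$. Since $e_{r}$ is homogeneous of degree $r$ and the specializations $q^{-\lambda}t^{\delta-1}$ and $q^{-\lambda}t^{\delta}$ differ by an overall factor $t^{-1}$ on every variable, $\pr{\mcal{E}}_{r}(\lambda)=e_{r}(q^{-\lambda}t^{\delta-1})=t^{-r}e_{r}(q^{-\lambda}t^{\delta})$. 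Inverting the parameters in $E_{r}P_{\lambda}=e_{r}(q^{\lambda}t^{-\delta})P_{\lambda}$ and using the symmetry $P_{\lambda}(X;q^{-1},t^{-1})=P_{\lambda}(X;q,t)$, the operator $\what{E}_{r}(q^{-1},t^{-1})$ is diagonalized by $\ket{P_{\lambda}}$ with eigenvalue $e_{r}(q^{-\lambda}t^{\delta})$. Hence $\mcal{O}(\pr{\mcal{E}}_{r})=t^{-r}\what{E}_{r}(q^{-1},t^{-1})$, and Theorem~\ref{thm:others_determinant} turns this into
\[
	\mcal{O}(\pr{\mcal{E}}_{r})=\frac{1}{r!}\int\left(\prod_{i=1}^{r}\frac{dz_{i}}{2\pi\sqrt{-1}}\right)\det\left(\frac{1}{z_{i}-tz_{j}}\right)_{1\le i,j\le r}\no{\xi(z_{1})\cdots\xi(z_{r})},
\]
the exact $\xi$-analogue of the starting identity for $\mcal{O}(\mcal{E}_{r})$.

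Next I would imitate the proof of Theorem~\ref{thm:correlation_sigma}, inserting these operators into the unnormalized correlator $\braket{0|\psi^{X^{(N)},Y^{(N)}}_{\pr{\mcal{E}}_{r_{N}}}\cdots\psi^{X^{(1)},Y^{(1)}}_{\pr{\mcal{E}}_{r_{1}}}|0}$, splitting each $\no{\xi(z^{(\alpha)}_{1})\cdots\xi(z^{(\alpha)}_{r_{\alpha}})}$ into its creation and annihilation parts and commuting all annihilation parts to the right. Four contractions are needed: $\Gamma(Y)_{+}$ through $\xi(z)_{-}$, $\xi(z)_{+}$ through $\Gamma(X)_{-}$, the self-contraction recorded by the operator product expansion
\[
	\xi(z)\xi(w)=\frac{(1-w/z)(1-q^{-1}tw/z)}{(1-q^{-1}w/z)(1-tw/z)}\no{\xi(z)\xi(w)},
\]
and $\Gamma(Y)_{+}$ through $\Gamma(X)_{-}$, which reproduces the factors $\Pi(X^{(\alpha)},Y^{(\beta)})$ that cancel against the normalization. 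The OPE factor is exactly the building block of $\widetilde{W}$, so the cross-site terms assemble into $\prod_{\alpha<\beta}\widetilde{W}(\bm{z}^{(\beta)};\bm{z}^{(\alpha)})$ just as the $\eta$-OPE assembled into $W$.

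The new feature is the shift $(t/q)^{n/2}$ in $\xi$. Carrying it through the first two contractions yields, in place of the $\eta$-factors $H(t^{-1}z;Y)^{-1}$ and $H(z^{-1};X)^{-1}$, the factors $H((qt)^{-1/2}z;Y)$ and $H((t/q)^{1/2}z^{-1};X)$, now with exponent $+1$ because the signs in the exponentials of $\xi$ are opposite to those of $\eta$. These arguments differ from the clean ones in the statement by an overall factor $(q/t)^{1/2}$, which I would remove by first invoking the scale invariance in Lemma~\ref{lem:measure_invariance}: the uniform rescaling $z\mapsto (t/q)^{1/2}z$ of all integration variables leaves the Cauchy determinant and $\widetilde{W}$ untouched while converting the arguments into $q^{-1}z$ and $z^{-1}$. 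Then the inversion invariance $w=z^{-1}$ of Lemma~\ref{lem:measure_invariance}, together with the symmetry $\widetilde{W}(\bm{z}^{-1};\bm{w}^{-1})=\widetilde{W}(\bm{w};\bm{z})$, turns $H(q^{-1}z;Y)$ into $H((qw)^{-1};Y)$ and $H(z^{-1};X)$ into $H(w;X)$, exactly as in the previous proof, producing the stated expression.

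The main obstacle is therefore not conceptual but the careful bookkeeping of the half-integer powers $(t/q)^{n/2}$, which are absent in the $\eta$ computation. One must check that they combine across these contractions so that a \emph{single} uniform rescaling permitted by Lemma~\ref{lem:measure_invariance} simultaneously normalizes both the $X$- and the $Y$-arguments of $H$ (the fortunate point being that the discrepancies for the two families are reciprocal), and that the $\xi$-OPE reproduces $\widetilde{W}$ with precisely the $q,t$ placement appearing in the statement.
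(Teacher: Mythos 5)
Your proposal is correct and follows essentially the same route as the paper's proof: the identification $\mcal{O}(\pr{\mcal{E}}_{r})=t^{-r}\what{E}_{r}(q^{-1},t^{-1})$ via $P_{\lambda}(X;q,t)=P_{\lambda}(X;q^{-1},t^{-1})$ together with Theorem \ref{thm:others_determinant}, the same $\xi$-contractions and OPE, and the same final change of variables, which the paper performs as the single combined substitution $w^{(\alpha)}_{i}=t^{1/2}q^{-1/2}(z^{(\alpha)}_{i})^{-1}$ rather than your equivalent rescaling-then-inversion, both covered by Lemma \ref{lem:measure_invariance}. Your contraction factors $H((qt)^{-1/2}z;Y)$ and $H((t/q)^{1/2}z^{-1};X)$ are precisely the self-consistent ones: they agree with the paper's same-site formula and yield the stated theorem after the substitution, whereas the exponent $q^{1/2}$ in the paper's displayed cross-site factor $H(t^{-1/2}q^{1/2}z^{(\alpha)}_{i};Y^{(\beta)})$ is evidently a typo for $q^{-1/2}$.
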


\begin{proof}
The proof is very similar to that of Theorem \ref{thm:correlation_sigma}.
The observable $\pr{\mcal{E}}_{r}$ is just obtained from $\mcal{E}_{r}$ by simultaneously inverting the parameters $q$ and $t$.
Since $P_{\lambda}(X;q,t)=P_{\lambda}(X;q^{-1},t^{-1})$ \cite[Chapter VI, (4.14.iv) ]{Macdonald1995}, and from Theorem \ref{thm:others_determinant}, we have
\begin{align*}
	\mcal{O}(\pr{\mcal{E}}_{r})&=t^{-r}\what{E}_{r}(q^{-1},t^{-1})\\
	&=\frac{1}{r!}\int\left(\prod_{i=1}^{r}\frac{dz_{i}}{2\pi\sqrt{-1}}\right)\det\left(\frac{1}{z_{i}-tz_{j}}\right)_{1\le i,j\le r}\no{\xi(z_{1})\cdots \xi(z_{r})}.
\end{align*}

For $r_{1},\dots, r_{N}\in \{1,2,\dots\}$, the unnormalized correlation function reads
\begin{align*}
	&\braket{0|\psi^{X^{(N)},Y^{(N)}}_{\pr{\mcal{E}}_{r_{N}}}\cdots \psi^{X^{(1)},Y^{(1)}}_{\pr{\mcal{E}}_{r_{1}}}|0} \\
	&=\frac{1}{\prod_{\alpha=1}^{N}r_{\alpha}!}\int \prod_{\alpha=1}^{N}\left(\prod_{i=1}^{r_{\alpha}}\frac{dz^{(\alpha)}_{i}}{2\pi\sqrt{-1}}\right)\det\left(\frac{1}{z^{(\alpha)}_{i}-tz^{(\beta)}_{j}}\right)_{1\le i,j \le r_{\alpha}} \\
	&\hspace{50pt}\times \braket{0|\Gamma (Y^{(N)})_{+}\xi (\bm{z}^{(N)}) \Gamma (X^{(N)})_{-}\cdots \Gamma (Y^{(1)})_{+}\xi (\bm{z}^{(1)}) \Gamma (X^{(1)})_{-}|0},
\end{align*}
where we set $\xi (\bm{z}^{(\alpha)})=\no{\xi (z^{(\alpha)}_{1})\cdots \xi (z^{(\alpha)}_{r_{\alpha}})}$, $\alpha =1,\dots, N$.
We again write $\xi (\bm{z}^{(\alpha)})=\xi (\bm{z}^{(\alpha)})_{-}\xi (\bm{z}^{(\alpha)})_{+}$, $\alpha=1,\dots, N$, where
\begin{align*}
	\xi (\bm{z}^{(\alpha)})_{+}&=\exp\left(\sum_{n>0}\frac{1-t^{n}}{n}(t/q)^{n/2}a_{n}\sum_{i=1}^{r_{\alpha}}(z^{(\alpha)}_{i})^{-n}\right), \\
	\xi (\bm{z}^{(\alpha)})_{-}&=\exp\left(-\sum_{n>0}\frac{1-t^{-n}}{n}(t/q)^{n/2}a_{-n}\sum_{i=1}^{r_{\alpha}}(z^{(\alpha)}_{i})^{n}\right).
\end{align*}
To compute the matrix element, we first note the following formula:
\begin{align*}
	&\Gamma (Y^{(\alpha)})_{+}\xi (\bm{z}^{(\alpha)}) \Gamma (X^{(\alpha)})_{-} \\
	&=\exp\left(-\sum_{n>0}\frac{1-t^{-n}}{n}(t/q)^{n/2}p_{n}(Y^{(\alpha)})\sum_{i=1}^{r_{\alpha}}(z^{(\alpha)}_{i})^{n}\right)\\
	&\hspace{20pt}\times \exp\left(\sum_{n>0}\frac{1-t^{n}}{n}(t/q)^{n/2}p_{n}(X^{(\alpha)})\sum_{i=1}^{r_{\alpha}}(z^{(\alpha)}_{i})^{-n}\right) \\
	&\hspace{20pt}\times \xi (\bm{z}^{(\alpha)})_{-}\Gamma (Y^{(\alpha)})_{+}\Gamma (X^{(\alpha)})_{-}\xi (\bm{z}^{(\alpha)})_{+} \\
	&=\Pi (X^{(\alpha)},Y^{(\alpha)}) \prod_{i=1}^{r_{\alpha}}H(t^{1/2}q^{-1/2}(z^{(\alpha)}_{i})^{-1};X^{(\alpha)})H(t^{-1/2}q^{-1/2}z^{(\alpha)}_{i};Y^{(\alpha)})\\
	&\hspace{20pt}\times \xi (\bm{z}^{(\alpha)})_{-}\Gamma (X^{(\alpha)})_{-} \Gamma (Y^{(\alpha)})_{+}\xi (\bm{z}^{(\alpha)})_{+}.
\end{align*}
From the OPE
\begin{equation*}
	\xi (z)\xi(w)=\frac{(1-w/z)(1-q^{-1}tw/z)}{(1-q^{-1}w/z)(1-tw/z)}\no{\xi (z)\xi (w)},
\end{equation*}
we also have
\begin{align*}
	&\Gamma (Y^{(\beta)})_{+}\xi (\bm{z}^{(\beta)})_{+} \xi (\bm{z}^{(\alpha)})_{-} \Gamma (X^{(\alpha)})_{-} \\
	&=\Pi (X^{(\alpha)},Y^{(\beta)}) \prod_{i=1}^{r_{\alpha}}H(t^{-1/2}q^{1/2}z^{(\alpha)}_{i};Y^{(\beta)})\prod_{i=1}^{r_{\beta}}H(t^{1/2}q^{-1/2}(z^{(\beta)}_{i})^{-1};X^{(\alpha)}) \\
	&\hspace{20pt}\times \widetilde{W}(\bm{z}^{(\beta)},\bm{z}^{(\alpha)})\xi (\bm{z}^{(\alpha)})_{-} \Gamma (X^{(\alpha)})_{-} \Gamma (Y^{(\beta)})_{+}\xi (\bm{z}^{(\beta)})_{+}.
\end{align*}

Therefore, the correlation function becomes
\begin{align*}
	&\mbb{E}_{q,t}^{N}[\pr{\mcal{E}}_{r_{1}}[1]\cdots \pr{\mcal{E}}_{r_{N}}[N]]\\
	&=\frac{1}{\prod_{\alpha=1}^{N}r_{\alpha}!}
		\int \prod_{\alpha=1}^{N}\left(\prod_{i=1}^{r_{\alpha}}\frac{dz^{(\alpha)}_{i}}{2\pi\sqrt{-1}}\right)
			\det\left(\frac{1}{z^{(\alpha)}_{i}-tz^{(\alpha)}_{j}}\right)_{1\le i,j\le r_{\alpha}} \\
	&\hspace{20pt}\times\prod_{1\le \alpha\le \beta\le N}\left(\prod_{i=1}^{r_{\beta}}H(t^{1/2}q^{-1/2}(z^{(\beta)}_{i})^{-1};X^{(\alpha)})\right)\left(\prod_{i=1}^{r_{\alpha}}H(t^{-1/2}q^{1/2}z^{(\alpha)}_{i};Y^{(\beta)})\right) \\
	&\hspace{20pt} \times \prod_{1\le \alpha <\beta\le N}\widetilde{W}(\bm{z}^{(\beta)},\bm{z}^{(\alpha)}).
\end{align*}
When we perform a transformation of integral variables so that $w^{(\alpha)}_{i}:=t^{1/2}q^{-1/2}(z^{(\alpha)}_{i})^{-1}$, $i=1,\dots, r_{\alpha}$, $\alpha=1,\dots, N$,
with help of Lemma \ref{lem:measure_invariance}, we obtain the desired result.
\end{proof}

Again, in the case when $N=1$, we obtain the following result in the same manner as for Corollary \ref{cor:Fredholm_sigma}.
\begin{cor}
Set
\begin{equation*}
	K^{\pr{\mcal{E}}}(z,w):=\frac{1}{z-tw}H(z;X)H((qw)^{-1};Y).
\end{equation*}
Then we have
\begin{equation*}
	\mbb{E}_{q,t}[E^{\pr{\mcal{E}}}(u)]=\det (I+uK^{\pr{\mcal{E}}}).
\end{equation*}
\end{cor}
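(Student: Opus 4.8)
The plan is to follow verbatim the route used for Corollary \ref{cor:Fredholm_sigma}: specialize Theorem \ref{thm:correlation_rho} to $N=1$ and then match the resulting expansion against Definitions \ref{defn:expectation_generating_function} and \ref{defn:Fredholm_determinant}. First I would set $N=1$ in Theorem \ref{thm:correlation_rho}. The index range $1\le\alpha\le\beta\le N$ then collapses to the single pair $\alpha=\beta=1$, while the range $1\le\alpha<\beta\le N$ is empty, so every cross-term factor $\widetilde{W}(\bm{w}^{(\alpha)};\bm{w}^{(\beta)})$ drops out. What survives is, for each $r=1,2,\dots$,
\begin{equation*}
	\mbb{E}_{q,t}[\pr{\mcal{E}}_{r}]=\frac{1}{r!}\int\left(\prod_{i=1}^{r}\frac{dw_{i}}{2\pi\sqrt{-1}}\right)\det\left(\frac{1}{w_{i}-tw_{j}}\right)_{1\le i,j\le r}\prod_{i=1}^{r}H(w_{i};X)\prod_{i=1}^{r}H((qw_{i})^{-1};Y).
\end{equation*}

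The key step is then to recognize this integrand as the determinant of the kernel $K^{\pr{\mcal{E}}}$. Writing $K^{\pr{\mcal{E}}}(w_{i},w_{j})=\frac{1}{w_{i}-tw_{j}}H(w_{i};X)H((qw_{j})^{-1};Y)$, I observe that $H(w_{i};X)$ depends only on the row index $i$ and $H((qw_{j})^{-1};Y)$ only on the column index $j$. By multilinearity of the determinant—pulling a common scalar out of each row and of each column—I obtain
\begin{equation*}
	\det\left[K^{\pr{\mcal{E}}}(w_{i},w_{j})\right]_{1\le i,j\le r}=\left(\prod_{i=1}^{r}H(w_{i};X)\right)\left(\prod_{j=1}^{r}H((qw_{j})^{-1};Y)\right)\det\left(\frac{1}{w_{i}-tw_{j}}\right)_{1\le i,j\le r},
\end{equation*}
which is exactly the integrand produced in the first step.

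Finally I would assemble the generating function. By Definition \ref{defn:expectation_generating_function}, $\mbb{E}_{q,t}[E^{\pr{\mcal{E}}}(u)]=\sum_{r\ge 0}\mbb{E}_{q,t}[\pr{\mcal{E}}_{r}]u^{r}$, whose constant term is $\mbb{E}_{q,t}[\pr{\mcal{E}}_{0}]=\mbb{E}_{q,t}[1]=1$, matching the leading $1$ in Definition \ref{defn:Fredholm_determinant}. Comparing the coefficient of $u^{r}$ for $r\ge 1$ with the $r$-th summand of $\det(I+uK^{\pr{\mcal{E}}})$ then closes the argument. I do not expect a genuine obstacle, as the reasoning is fully parallel to Corollary \ref{cor:Fredholm_sigma}; the only point requiring mild care is the row/column factorization in the second step—specifically, keeping straight that $H(\cdot;X)$ is the row factor and $H(\cdot;Y)$ the column factor, since interchanging them leaves the product invariant but alters the precise form of the kernel $K^{\pr{\mcal{E}}}$ being claimed.
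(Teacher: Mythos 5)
Your proposal is correct and follows exactly the paper's route: the paper proves this corollary ``in the same manner as for Corollary~\ref{cor:Fredholm_sigma}'', namely by setting $N=1$ in Theorem~\ref{thm:correlation_rho} (so the cross-terms $\widetilde{W}$ vanish), absorbing the row factor $H(w_{i};X)$ and column factor $H((qw_{j})^{-1};Y)$ into the determinant to identify the integrand with $\det[K^{\pr{\mcal{E}}}(w_{i},w_{j})]$, and matching the $u$-expansion against Definitions~\ref{defn:expectation_generating_function} and~\ref{defn:Fredholm_determinant}. Your extra care about which factor is the row factor and which is the column factor is sound and consistent with the kernel as stated.
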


\subsection{Third series of observables}
We also consider other random variables defined by
\begin{equation*}
	\mcal{G}_{r}(\lambda):=g_{r}(q^{\lambda}t^{-\delta};q,t),\ \ \lambda\in\mbb{Y},\ \ r=1,2,\dots
\end{equation*}
and their generating function
\begin{equation*}
	F^{\mcal{G}}(u)=F^{\mcal{G}}(\cdot,u)=\sum_{r=0}^{\infty}\mcal{G}_{r}(\cdot)u^{r};\ \ F^{\mcal{G}}(\lambda,u)=\prod_{i\ge 1}\frac{(q^{\lambda_{i}}t^{-i+1}u;q)_{\infty}}{(q^{\lambda_{i}}t^{-i}u;q)_{\infty}},
\end{equation*}
with $\mcal{G}_{0}=1$. Here we used the property \cite[Chapter VI, Section 2]{Macdonald1995}
\begin{equation*}
	\prod_{i\ge 1}\frac{(tx_{i}u;q)_{\infty}}{(x_{i}u;q)_{\infty}}=\exp\left(\sum_{n>0}\frac{1-t^{n}}{1-q^{n}}\frac{p_{n}(X)}{n}u^{n}\right)=1+\sum_{r=1}^{\infty}g_{r}(X;q,t)u^{r}.
\end{equation*}

\begin{thm}
\label{thm:correlation_gs}
Let $N\in\{1,2,\dots\}$ and $r_{1},\dots, r_{N}\in \{1,2,\dots\}$.
The correlation function of $\mcal{G}_{r_{1}},\dots, \mcal{G}_{r_{N}}$ with respect to the $N$-step Macdonald process becomes
\begin{align*}
	&\mbb{E}^{N}_{q,t}[\mcal{G}_{r_{1}}[1]\cdots \mcal{G}_{r_{N}}[N]]\\
	&=\frac{(-1)^{\sum_{\alpha=1}^{N}r_{\alpha}}}{\prod_{\alpha=1}^{N}r_{\alpha}!}\int \prod_{\alpha=1}^{N}\left(\prod_{i=1}^{r_{\alpha}}\frac{dw^{(\alpha)}_{i}}{2\pi\sqrt{-1}}\right)\det\left(\frac{1}{w^{(\alpha)}_{i}-qw^{(\alpha)}_{j}}\right)_{1\le i,j\le r_{\alpha}} \\
	&\hspace{50pt} \times \prod_{1\le \alpha\le\beta\le N}\left(\prod_{i=1}^{r_{\beta}}H(w^{(\beta)}_{i};X^{(\alpha)})^{-1}\right)\left(\prod_{i=1}^{r_{\alpha}}H((tw^{(\alpha)}_{i})^{-1};Y^{(\beta)})^{-1}\right) \\
	&\hspace{50pt} \times \prod_{1\le \alpha <\beta\le N}W(\bm{w}^{(\alpha)};\bm{w}^{(\beta)}).
\end{align*}
\end{thm}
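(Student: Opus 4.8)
The plan is to run the computation of Theorem \ref{thm:correlation_sigma} essentially verbatim, exploiting the fact that the free field realizations $\what{G}_{r}$ and $t^{r}\what{E}_{r}$ are built from the \emph{same} vertex operator $\eta(z)$. First I would identify the operator attached to the observable. Since $G_{r}P_{\lambda}=g_{r}(q^{\lambda}t^{-\delta};q,t)P_{\lambda}$ by Theorem \ref{thm:operator_G} and $\mcal{G}_{r}(\lambda)=g_{r}(q^{\lambda}t^{-\delta};q,t)$, the diagonal operator $\mcal{O}(\mcal{G}_{r})=\sum_{\lambda}g_{r}(q^{\lambda}t^{-\delta})\ket{P_{\lambda}}\bra{Q_{\lambda}}$ coincides with $\what{G}_{r}$. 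Hence, by Theorem \ref{thm:others_determinant},
\begin{equation*}
	\mcal{O}(\mcal{G}_{r})=\frac{(-1)^{r}}{r!}\int\left(\prod_{i=1}^{r}\frac{dz_{i}}{2\pi\sqrt{-1}}\right)\det\left(\frac{1}{z_{i}-qz_{j}}\right)_{1\le i,j\le r}\no{\eta(z_{1})\cdots\eta(z_{r})}.
\end{equation*}

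Then I would insert these operators into $\braket{0|\psi^{X^{(N)},Y^{(N)}}_{\mcal{G}_{r_{N}}}\cdots\psi^{X^{(1)},Y^{(1)}}_{\mcal{G}_{r_{1}}}|0}$ via Theorem \ref{thm:correlation_operator} and split each normally ordered bunch $\eta(\bm{z}^{(\alpha)})=\eta(\bm{z}^{(\alpha)})_{-}\eta(\bm{z}^{(\alpha)})_{+}$ exactly as in the proof of Theorem \ref{thm:correlation_sigma}. The crucial observation is that every input to that computation---the commutation relations between $\eta(\bm{z}^{(\alpha)})_{\pm}$ and $\Gamma(X^{(\alpha)})_{\pm}$, which produce the factors $H((z^{(\alpha)}_{i})^{-1};X^{(\alpha)})^{-1}$ and $H(t^{-1}z^{(\alpha)}_{i};Y^{(\alpha)})^{-1}$, together with the OPE $\eta(z)\eta(w)=\frac{(1-w/z)(1-qt^{-1}w/z)}{(1-qw/z)(1-t^{-1}w/z)}\no{\eta(z)\eta(w)}$, which produces the cross factor $W(\bm{z}^{(\beta)},\bm{z}^{(\alpha)})$---depends only on the vertex operator $\eta$ and not on the scalar prefactor nor on the determinantal kernel multiplying it. Consequently the matrix element factorizes precisely as in Theorem \ref{thm:correlation_sigma}, yielding the same $H$-products and the same $W$-products; the \emph{only} changes are the global sign $\prod_{\alpha}(-1)^{r_{\alpha}}=(-1)^{\sum_{\alpha}r_{\alpha}}$ coming from the prefactors of the $\what{G}_{r_{\alpha}}$ and the replacement of $t^{-1}$ by $q$ inside each determinant.

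Finally I would perform the same change of variables $w^{(\alpha)}_{i}=(z^{(\alpha)}_{i})^{-1}$ used in Theorem \ref{thm:correlation_sigma}. The invariance of the functional $\int D^{r_{\alpha}}_{q}z$ under inversion (Lemma \ref{lem:measure_invariance}, now applied with $\gamma=q$) converts $\det(\frac{1}{z^{(\alpha)}_{i}-qz^{(\alpha)}_{j}})$ into $\det(\frac{1}{w^{(\alpha)}_{i}-qw^{(\alpha)}_{j}})$, while the identity $W(z^{-1};w^{-1})=W(w;z)$ and the concomitant transformation of the $H$-factors reproduce the stated products. Since none of these steps introduces any new analytic content beyond bookkeeping, there is no genuine obstacle in this proof; the one point that requires care is simply verifying that the sign $(-1)^{\sum_{\alpha}r_{\alpha}}$ and the kernel denominator $q$ are the \emph{only} data differing from the $\mcal{E}$-case, so that the remainder of the argument may be imported from Theorem \ref{thm:correlation_sigma} without repetition.
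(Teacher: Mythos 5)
Your proposal is correct and follows essentially the same route as the paper: the paper's proof likewise identifies $\mcal{O}(\mcal{G}_{r})=\what{G}_{r}$ via Theorems \ref{thm:operator_G} and \ref{thm:others_determinant} and then observes that the matrix element $\braket{0|\Gamma (Y^{(N)})_{+}\eta (\bm{z}^{(N)}) \Gamma (X^{(N)})_{-}\cdots \Gamma (Y^{(1)})_{+}\eta (\bm{z}^{(1)}) \Gamma (X^{(1)})_{-}|0}$ was already computed in the proof of Theorem \ref{thm:correlation_sigma}, since it depends only on $\eta$ and not on the kernel or prefactor. Your explicit bookkeeping of the sign $(-1)^{\sum_{\alpha}r_{\alpha}}$, the kernel denominator $q$, and the inversion $w^{(\alpha)}_{i}=(z^{(\alpha)}_{i})^{-1}$ via Lemma \ref{lem:measure_invariance} is exactly what the paper leaves implicit by citation.
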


\begin{proof}
From Theorems \ref{thm:operator_G} and \ref{thm:others_determinant}, we have
\begin{equation*}
	\mcal{O}(\mcal{G}_{r})=\what{G}_{r}=\frac{(-1)^{r}}{r!}\int \left(\prod_{i=1}^{r}\frac{dz_{i}}{2\pi \sqrt{-1}}\right)\det\left(\frac{1}{z_{i}-qz_{j}}\right)_{1\le i,j\le r}\no{\eta(z_{1})\cdots \eta(z_{r})}.
\end{equation*}
The essential part of the proof is computation of the matrix element
\begin{equation*}
\braket{0|\Gamma (Y^{(N)})_{+}\eta (\bm{z}^{(N)}) \Gamma (X^{(N)})_{-}\cdots \Gamma (Y^{(1)})_{+}\eta (\bm{z}^{(1)}) \Gamma (X^{(1)})_{-}|0},
\end{equation*}
but this was given in the proof of Theorem \ref{thm:correlation_sigma}.
Therefore, we obtain the desire result.
\end{proof}

In case of $N=1$, we have
\begin{cor}
Set
\begin{equation*}
	K^{\mcal{G}}(z,w):=\frac{1}{z-qw}H(z;X)^{-1}H((tw)^{-1};Y)^{-1}.
\end{equation*}
Then we have
\begin{equation*}
	\mbb{E}_{q,t}[F^{\mcal{G}}(u)]=\det (I-uK^{\mcal{G}}).
\end{equation*}
\end{cor}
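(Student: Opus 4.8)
The plan is to specialize Theorem~\ref{thm:correlation_gs} to the case $N=1$ and then recognize the resulting power series in $u$ as a formal Fredholm determinant, in complete parallel with the proof of Corollary~\ref{cor:Fredholm_sigma}. Setting $N=1$ in Theorem~\ref{thm:correlation_gs}, the product over $1\le\alpha<\beta\le N$ is empty and the product over $1\le\alpha\le\beta\le N$ collapses to the single term $\alpha=\beta=1$, so that for each $r=1,2,\dots$ one has
\begin{equation*}
	\mbb{E}_{q,t}[\mcal{G}_{r}]=\frac{(-1)^{r}}{r!}\int\left(\prod_{i=1}^{r}\frac{dw_{i}}{2\pi\sqrt{-1}}\right)\det\left(\frac{1}{w_{i}-qw_{j}}\right)_{1\le i,j\le r}\prod_{i=1}^{r}H(w_{i};X)^{-1}H((tw_{i})^{-1};Y)^{-1}.
\end{equation*}

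The key step is to absorb the diagonal factors $\prod_{i}H(w_{i};X)^{-1}H((tw_{i})^{-1};Y)^{-1}$ into the determinant by multilinearity. Since $H(w_{i};X)^{-1}$ depends only on the row index $i$ and $H((tw_{j})^{-1};Y)^{-1}$ depends only on the column index $j$, scaling the $i$-th row by $H(w_{i};X)^{-1}$ and the $j$-th column by $H((tw_{j})^{-1};Y)^{-1}$ yields
\begin{equation*}
	\det\left[K^{\mcal{G}}(w_{i},w_{j})\right]_{1\le i,j\le r}=\left(\prod_{i=1}^{r}H(w_{i};X)^{-1}H((tw_{i})^{-1};Y)^{-1}\right)\det\left(\frac{1}{w_{i}-qw_{j}}\right)_{1\le i,j\le r},
\end{equation*}
whence $\mbb{E}_{q,t}[\mcal{G}_{r}]=\frac{(-1)^{r}}{r!}\int\left(\prod_{i=1}^{r}\frac{dw_{i}}{2\pi\sqrt{-1}}\right)\det[K^{\mcal{G}}(w_{i},w_{j})]_{1\le i,j\le r}$ for every $r\ge 1$.

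Finally I would assemble the generating function via Definition~\ref{defn:expectation_generating_function}, using $\mcal{G}_{0}=1$ to produce the constant term $1$, and compare the result term-by-term with Definition~\ref{defn:Fredholm_determinant}. The crucial bookkeeping point is that the factor $(-1)^{r}$ converts each $u^{r}$ into $(-u)^{r}$, so that the series reproduces $\det(I-uK^{\mcal{G}})$ rather than $\det(I+uK^{\mcal{G}})$. I expect no genuine obstacle here: the argument mirrors Corollary~\ref{cor:Fredholm_sigma} almost verbatim, the only substantive difference being the sign $(-1)^{r}$ originating from $\what{G}_{r}$ (Theorem~\ref{thm:others_determinant}) in place of the sign-free $\what{E}_{r}$ used there, which is precisely what flips the Fredholm determinant from $I+uK$ to $I-uK$.
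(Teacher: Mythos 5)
Your proposal is correct and follows essentially the same route as the paper, which obtains this corollary by specializing Theorem~\ref{thm:correlation_gs} to $N=1$, absorbing the row factors $H(w_{i};X)^{-1}$ and column factors $H((tw_{j})^{-1};Y)^{-1}$ into the determinant, and invoking Definitions~\ref{defn:expectation_generating_function} and~\ref{defn:Fredholm_determinant} exactly as in the proof of Corollary~\ref{cor:Fredholm_sigma}. Your bookkeeping of the sign $(-1)^{r}$, which converts $u^{r}$ into $(-u)^{r}$ and hence yields $\det(I-uK^{\mcal{G}})$ instead of $\det(I+uK^{\mcal{G}})$, is precisely the point the paper relies on as well.
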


\subsection{Fourth series of observables}
The final series of observables $\pr{\mcal{G}}_{r}$, $r=1,2,\dots$ is defined by
\begin{equation*}
	\pr{\mcal{G}}_{r}(\lambda)=q^{r}g_{r}(q^{-\lambda}t^{\delta-1};q,t),\ \ \lambda\in\mbb{Y},\ \ r=1,2,\dots.
\end{equation*}
Their generating function reads
\begin{equation*}
	F^{\pr{\mcal{G}}}(u)=F^{\pr{\mcal{G}}}(\cdot,u)=\sum_{r=0}^{\infty}\pr{\mcal{G}}_{r}(\cdot)u^{r};\ \ F^{\pr{\mcal{G}}}(\lambda,u)=\prod_{i\ge 1}\frac{(q^{-\lambda_{i}+1}t^{i}u;q)_{\infty}}{(q^{-\lambda_{i}+1}t^{i-1}u;q)_{\infty}},
\end{equation*}
with $\pr{\mcal{G}}_{0}=1$.

\begin{thm}
\label{thm:correlation_grho}
Let $N\in\{1,2,\dots\}$ and $r_{1},\dots, r_{N}\in \{1,2,\dots\}$.
The correlation function of $\pr{\mcal{G}}_{r_{1}},\dots, \pr{\mcal{G}}_{r_{N}}$ with respect to the $N$-step Macdonald process becomes
\begin{align*}
	&\mbb{E}^{N}_{q,t}[\pr{\mcal{G}}_{r_{1}}[1]\cdots \pr{\mcal{G}}_{r_{N}}[N]]\\
	&=\frac{(-1)^{\sum_{\alpha=1}^{N}r_{\alpha}}}{\prod_{\alpha=1}^{N}r_{\alpha}!}\int \prod_{\alpha=1}^{N}\left(\prod_{i=1}^{r_{\alpha}}\frac{dw^{(\alpha)}_{i}}{2\pi\sqrt{-1}}\right)\det\left(\frac{1}{w^{(\alpha)}_{i}-q^{-1}w^{(\alpha)}_{j}}\right)_{1\le i,j\le r_{\alpha}} \\
	&\hspace{50pt} \times \prod_{1\le \alpha\le\beta\le N}\left(\prod_{i=1}^{r_{\beta}}H(w^{(\beta)}_{i};X^{(\alpha)})\right)\left(\prod_{i=1}^{r_{\alpha}}H((qw^{(\alpha)}_{i})^{-1};Y^{(\beta)})\right) \\
	&\hspace{50pt} \times \prod_{1\le \alpha <\beta\le N}\widetilde{W}(\bm{w}^{(\alpha)};\bm{w}^{(\beta)}).
\end{align*}
\end{thm}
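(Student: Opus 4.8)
The plan is to follow the proof of Theorem \ref{thm:correlation_rho} almost verbatim: the fourth series stands to $G_r(q^{-1},t^{-1})$ exactly as the third series stands to $G_r$, and the relevant vertex operator is again $\xi(z)$. First I would pin down the operator $\mcal{O}(\pr{\mcal{G}}_r)$. By Theorem \ref{thm:operator_G}, applied after the substitution $q\to q^{-1}$, $t\to t^{-1}$ and the symmetry $P_\lambda(X;q,t)=P_\lambda(X;q^{-1},t^{-1})$ already exploited in Subsection \ref{subsect:second_series}, the operator $G_r(q^{-1},t^{-1})$ is diagonalized by the Macdonald symmetric functions with
\begin{equation*}
	\what{G}_r(q^{-1},t^{-1})=\sum_{\lambda\in\mbb{Y}}g_r(q^{-\lambda}t^{\delta};q^{-1},t^{-1})\ket{P_\lambda}\bra{Q_\lambda}.
\end{equation*}
It then suffices to verify the eigenvalue identity
\begin{equation*}
	\pr{\mcal{G}}_r(\lambda)=q^{r}g_r(q^{-\lambda}t^{\delta-1};q,t)=g_r(q^{-\lambda}t^{\delta};q^{-1},t^{-1}),\ \ \lambda\in\mbb{Y},
\end{equation*}
which yields $\mcal{O}(\pr{\mcal{G}}_r)=\what{G}_r(q^{-1},t^{-1})$ and, through Theorem \ref{thm:others_determinant},
\begin{equation*}
	\mcal{O}(\pr{\mcal{G}}_r)=\frac{(-1)^{r}}{r!}\int\left(\prod_{i=1}^{r}\frac{dz_i}{2\pi\sqrt{-1}}\right)\det\left(\frac{1}{z_i-q^{-1}z_j}\right)_{1\le i,j\le r}\no{\xi(z_1)\cdots\xi(z_r)}.
\end{equation*}

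With this expression in hand I would invoke Theorem \ref{thm:correlation_operator}. The unnormalized correlation function is then assembled from precisely the matrix element $\braket{0|\Gamma(Y^{(N)})_+\xi(\bm{z}^{(N)})\Gamma(X^{(N)})_-\cdots\Gamma(Y^{(1)})_+\xi(\bm{z}^{(1)})\Gamma(X^{(1)})_-|0}$ that was evaluated in the proof of Theorem \ref{thm:correlation_rho} via the $\xi$-OPE and the Baker--Campbell--Hausdorff relations, so I would reuse that evaluation wholesale. The only departures from Theorem \ref{thm:correlation_rho} are the integrand determinant $\det(1/(z_i-q^{-1}z_j))$ in place of $\det(1/(z_i-tz_j))$ and the global sign $(-1)^{\sum_{\alpha}r_\alpha}$, both inherited from the prefactor of $\what{G}_r(q^{-1},t^{-1})$; the factors $H$ and $\widetilde{W}$ are unchanged. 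Running the same change of variables $w^{(\alpha)}_i=t^{1/2}q^{-1/2}(z^{(\alpha)}_i)^{-1}$ and applying Lemma \ref{lem:measure_invariance}, which leaves $\int D^{r}_{q^{-1}}z$ invariant, produces the claimed formula.

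The one genuinely new step --- the main (and essentially only) obstacle --- is the eigenvalue identity above. I would prove it at the level of generating functions using $\sum_{r\ge 0}g_r(Z;a,b)u^{r}=\exp\left(\sum_{n>0}\frac{1-b^{n}}{1-a^{n}}\frac{p_n(Z)}{n}u^{n}\right)$. The claim then reduces to the scalar relation $\frac{1-t^{-n}}{1-q^{-n}}=(q/t)^{n}\frac{1-t^{n}}{1-q^{n}}$ together with $p_n(q^{-\lambda}t^{\delta})=t^{n}p_n(q^{-\lambda}t^{\delta-1})$, the latter holding both termwise and for the geometric tail prescribed by the specialization $q^{-\lambda}t^{\delta}$; substituting both into the exponential for the right-hand side turns it into the generating function of $q^{r}g_r(q^{-\lambda}t^{\delta-1};q,t)$, so the two coincide coefficientwise. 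Once this identity is secured, everything else is the bookkeeping already performed in Subsection \ref{subsect:second_series}.
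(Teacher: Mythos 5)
Your proposal is correct and follows essentially the same route as the paper's proof: identify $\mcal{O}(\pr{\mcal{G}}_{r})$ with $\what{G}_{r}(q^{-1},t^{-1})$ via the eigenvalue identity, insert the determinantal expression of Theorem \ref{thm:others_determinant}, and rerun the $\xi$-based matrix-element computation from the proof of Theorem \ref{thm:correlation_rho} (unchanged $H$ and $\widetilde{W}$ factors, new kernel $\det\bigl(1/(z_{i}-q^{-1}z_{j})\bigr)$ and sign) followed by the substitution $w^{(\alpha)}_{i}=t^{1/2}q^{-1/2}(z^{(\alpha)}_{i})^{-1}$ justified by Lemma \ref{lem:measure_invariance}. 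The only cosmetic difference is that where you verify $g_{r}(q^{-\lambda}t^{\delta};q^{-1},t^{-1})=q^{r}g_{r}(q^{-\lambda}t^{\delta-1};q,t)$ by the generating-function argument (which is sound, including the scaling $p_{n}(q^{-\lambda}t^{\delta})=t^{n}p_{n}(q^{-\lambda}t^{\delta-1})$ for the geometric tail), the paper simply cites Macdonald's identity $g_{r}(X;q^{-1},t^{-1})=(qt^{-1})^{r}g_{r}(X;q,t)$ together with homogeneity --- your derivation is just a self-contained proof of that cited fact.
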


\begin{proof}
The operators $\what{G}_{r}(q^{-1},t^{-1})$, $r=1,2,\dots$ are diagonalized by $\ket{P_{\lambda}}$, $\lambda\in\mbb{Y}$ so that
\begin{equation*}
	\what{G}_{r}(q^{-1},t^{-1})\ket{P_{\lambda}}=g_{r}(q^{-\lambda}t^{\delta};q^{-1},t^{-1})\ket{P_{\lambda}}.
\end{equation*}
Recall that, for $r=1,2,\dots$, we have \cite[Chapter VI, (4.14) (iv)]{Macdonald1995}
\begin{equation*}
	g_{r}(X;q^{-1},t^{-1})=(qt^{-1})^{r}g_{r}(X;q,t).
\end{equation*}
Hence, we have $g_{r}(q^{-\lambda}t^{\delta};q^{-1},t^{-1})=q^{r}g_{r}(q^{-\lambda}t^{\delta-1}q,t)$, $r=1,2,\dots$, which implies that
\begin{equation*}
	\mcal{O}(\pr{\mcal{G}}_{r})=\what{G}_{r}(q^{-1},t^{-1}).
\end{equation*}
Using the expression in Theorem \ref{thm:others_determinant} and following computation in
the proof of Theorem \ref{thm:correlation_rho},
we obtain the formula in Theorem \ref{thm:correlation_grho}.
\end{proof}

In case of $N=1$, we have
\begin{cor}
\label{cor:fredholm_grho}
Set
\begin{equation*}
	K^{\pr{\mcal{G}}}(z,w):=\frac{1}{z-q^{-1}w}H(z;X)H((qw)^{-1};Y).
\end{equation*}
Then we have
\begin{equation*}
	\mbb{E}_{q,t}[F^{\pr{\mcal{G}}}(u)]=\det (I-uK^{\pr{\mcal{G}}}).
\end{equation*}
\end{cor}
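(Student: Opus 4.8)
The plan is to mirror the derivation of Corollary \ref{cor:Fredholm_sigma}, specializing Theorem \ref{thm:correlation_grho} to the one-step case $N=1$ and then resumming the generating series. First I would put $N=1$ in Theorem \ref{thm:correlation_grho}. Since the product $\prod_{1\le\alpha<\beta\le N}\widetilde{W}(\bm{w}^{(\alpha)};\bm{w}^{(\beta)})$ is then empty and the two $H$-products collapse onto the single set of integration variables $w_{1},\dots,w_{r}$, the theorem reduces, for each $r=1,2,\dots$, to
\begin{equation*}
	\mbb{E}_{q,t}[\pr{\mcal{G}}_{r}]
	=\frac{(-1)^{r}}{r!}\int\left(\prod_{i=1}^{r}\frac{dw_{i}}{2\pi\sqrt{-1}}\right)
	\det\left(\frac{1}{w_{i}-q^{-1}w_{j}}\right)_{1\le i,j\le r}
	\prod_{i=1}^{r}H(w_{i};X)H((qw_{i})^{-1};Y).
\end{equation*}

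Next I would recognize this integrand as the determinant of the kernel $K^{\pr{\mcal{G}}}$. By multilinearity of the determinant, extracting the scalar $H(w_{i};X)$ from the $i$-th row and $H((qw_{j})^{-1};Y)$ from the $j$-th column gives
\begin{equation*}
	\det\left[K^{\pr{\mcal{G}}}(w_{i},w_{j})\right]_{1\le i,j\le r}
	=\left(\prod_{i=1}^{r}H(w_{i};X)H((qw_{i})^{-1};Y)\right)
	\det\left(\frac{1}{w_{i}-q^{-1}w_{j}}\right)_{1\le i,j\le r},
\end{equation*}
so that the previous display becomes $\mbb{E}_{q,t}[\pr{\mcal{G}}_{r}]=\frac{(-1)^{r}}{r!}\int\left(\prod_{i=1}^{r}\frac{dw_{i}}{2\pi\sqrt{-1}}\right)\det[K^{\pr{\mcal{G}}}(w_{i},w_{j})]_{1\le i,j\le r}$ for every $r\ge 1$.

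Finally I would resum. By Definition \ref{defn:expectation_generating_function} we have $\mbb{E}_{q,t}[F^{\pr{\mcal{G}}}(u)]=\sum_{r=0}^{\infty}\mbb{E}_{q,t}[\pr{\mcal{G}}_{r}]u^{r}$, whose $r$-th coefficient is $\frac{(-1)^{r}}{r!}\int\cdots\det[K^{\pr{\mcal{G}}}]$ for $r\ge1$ and $1$ for $r=0$. Comparing with Definition \ref{defn:Fredholm_determinant} applied with $-u$ in place of $u$, which reproduces exactly the factor $(-u)^{r}=(-1)^{r}u^{r}$, identifies this series with $\det(I-uK^{\pr{\mcal{G}}})$, as claimed.

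There is no real obstacle here; the computation is routine once Theorem \ref{thm:correlation_grho} is in hand. The only points demanding a little care are the collapse of all cross-terms at $N=1$ (the emptiness of the $\widetilde{W}$-product), the bookkeeping of the overall sign $(-1)^{r}$, which is absorbed by writing $\det(I-uK)$ rather than $\det(I+uK)$, and the observation that $K^{\pr{\mcal{G}}}(z,w)$ is a genuine element of $\mbb{F}[[z,z^{-1},w,w^{-1}]]$ so that the formal Fredholm determinant of Definition \ref{defn:Fredholm_determinant} exists.
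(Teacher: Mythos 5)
Your proposal is correct and follows essentially the same route as the paper, which derives Corollary \ref{cor:fredholm_grho} exactly as it does Corollary \ref{cor:Fredholm_sigma}: specialize Theorem \ref{thm:correlation_grho} to $N=1$, absorb the $H$-factors into the kernel by row--column multilinearity of the determinant, and resum via Definitions \ref{defn:expectation_generating_function} and \ref{defn:Fredholm_determinant} with $u$ replaced by $-u$. Your explicit remarks on the emptiness of the $\widetilde{W}$-product and on $K^{\pr{\mcal{G}}}$ being a well-defined element of $\mbb{F}[[z,z^{-1},w,w^{-1}]]$ are accurate and, if anything, slightly more careful than the paper's tacit treatment.
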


Corollary \ref{cor:fredholm_grho} admits a nontrivial $q$-Whittaker ($t\to 0$) limit.
The generating function $F^{\pr{\mcal{G}}}(u)$ reduces at the $q$-Whittaker limit to
\begin{equation*}
	F^{\pr{\mcal{G}}}(\lambda,u)\big|_{t=0}=\frac{1}{(q^{-\lambda_{1}+1}u;q)_{\infty}},\ \ \lambda\in\mbb{Y},
\end{equation*}
and the kernel function becomes
\begin{equation}
\label{eq:kernel_function_q-Whittaker}
	K^{\pr{\mcal{G}}}(z,w)\big|_{t=0}=\frac{1}{z-q^{-1}w}\prod_{i\ge 1}\frac{1}{1-x_{i}z}\frac{1}{1-y_{i}/(qz)}.
\end{equation}
Let us informally state the result at the $q$-Whittaker limit:
\begin{cor}
\label{cor:q-Whittaker}
At the $q$-Whittaker limit $t\to 0$, we have
\begin{equation*}
	\mbb{E}_{q,0}\left[\frac{1}{(q^{-\lambda_{1}+1}u;q)_{\infty}}\right]=\det \left(I-uK^{\pr{\mcal{G}}}\big|_{t=0}\right).
\end{equation*}
\end{cor}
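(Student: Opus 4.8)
The plan is to obtain Corollary \ref{cor:q-Whittaker} as the $t\to 0$ specialization of Corollary \ref{cor:fredholm_grho}, which already furnishes the identity $\mbb{E}_{q,t}[F^{\pr{\mcal{G}}}(u)]=\det(I-uK^{\pr{\mcal{G}}})$ for generic parameters. I would regard both sides as elements of $\mbb{F}[[u]]$ whose $u$-coefficients are (formal series in $X$ and $Y$ with) coefficients in $\mbb{C}(q,t)$, and then argue that every such coefficient is regular at $t=0$, so that the whole identity may be specialized termwise in $u$.

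First I would treat the right-hand side, which is the easy one. The kernel $K^{\pr{\mcal{G}}}(z,w)=\frac{1}{z-q^{-1}w}H(z;X)H((qw)^{-1};Y)$ depends on $t$ only through $H(w;X)=\prod_{i\ge 1}\frac{1-tx_{i}w}{1-x_{i}w}$, whose numerators are polynomial in $t$; hence $K^{\pr{\mcal{G}}}$ is regular at $t=0$ and its limit is obtained by literally setting $t=0$, yielding the kernel (\ref{eq:kernel_function_q-Whittaker}). By Definition \ref{defn:Fredholm_determinant}, the coefficient of $u^{r}$ in $\det(I-uK^{\pr{\mcal{G}}})$ is the extraction of a single fixed Laurent coefficient (the $\prod_{i}z_{i}^{-1}$ coefficient) of $\det[K^{\pr{\mcal{G}}}(z_{i},z_{j})]_{1\le i,j\le r}$, a series whose own coefficients are regular at $t=0$; extraction of a fixed coefficient commutes with the substitution $t=0$, so $\lim_{t\to 0}\det(I-uK^{\pr{\mcal{G}}})=\det(I-uK^{\pr{\mcal{G}}}\big|_{t=0})$ order by order in $u$.

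For the left-hand side I would use the computation already recorded just before the corollary: for each fixed $\lambda$, as $t\to 0$ every factor of $F^{\pr{\mcal{G}}}(\lambda,u)$ with index $i\ge 2$ tends to $1$ and only the $i=1$ factor survives, giving $\lim_{t\to 0}F^{\pr{\mcal{G}}}(\lambda,u)=(q^{-\lambda_{1}+1}u;q)_{\infty}^{-1}$; equivalently, at each order $u^{r}$ the observable $\pr{\mcal{G}}_{r}(\lambda)$ is regular at $t=0$ with a limit depending only on $\lambda_{1}$. It then remains to interchange this limit with the expectation $\sum_{\lambda}(\cdots)\mbb{MM}_{q,t}(\lambda)$. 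In the formal setting this is clean: expanding in the indeterminates $X,Y$, each fixed total degree receives contributions from only finitely many $\lambda$, so the sum is locally finite and $\lim_{t\to 0}$ passes through it, producing the expectation $\mbb{E}_{q,0}[(q^{-\lambda_{1}+1}u;q)_{\infty}^{-1}]$ against the $q$-Whittaker measure $\mbb{MM}_{q,0}$. Combining the two sides yields the claimed identity.

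The main obstacle, and the reason the statement is phrased only informally, is precisely this interchange of $\lim_{t\to 0}$ with the infinite sum over partitions once one insists on an honest analytic (positive) specialization of $X$ and $Y$: there $\mbb{MM}_{q,0}$ must be verified to be a genuine probability measure, and the convergence of $\sum_{\lambda}$ must be controlled uniformly for $t$ near $0$ by a dominated-convergence argument. Establishing such uniform tail bounds on the Macdonald measure as $t\to 0$ is the delicate analytic point; the degreewise/formal argument above sidesteps it but yields the identity only as formal power series in $X$, $Y$ and $u$.
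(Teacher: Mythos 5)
Your proposal is correct and takes essentially the same route as the paper: the corollary is obtained there precisely by specializing Corollary \ref{cor:fredholm_grho} at $t=0$, with the displayed computations of $F^{\pr{\mcal{G}}}(\lambda,u)\big|_{t=0}$ and $K^{\pr{\mcal{G}}}(z,w)\big|_{t=0}$ playing the role of your limit evaluations. Your degreewise argument for interchanging $t\to 0$ with the sum over partitions, and your closing remark on the analytic obstruction, simply make explicit why the paper states the result only informally.
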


\begin{rem}
A similar observable has been considered in \cite[Theorem 3.3]{BorodinCorwinFerrariVeto2015} (and in \cite[Theorem 3.20]{BarraquandBorodinCorwin2020} for a half-space counterpart).
In comparison with these results, our result does not require any specialization of variables
and the kernel function (\ref{eq:kernel_function_q-Whittaker}) seems to be simpler, though its good application has not yet been found.
\end{rem}

\section{Generalized Macdonald measure}
\label{sect:gen_Mac_meas}
In this section, we propose a generalization of the Macdonald measure using the representation theory of the DIM algebra \cite{DingIohara1997,Miki2007} and give a proof of Theorem \ref{thm:generalized_Mcdonald_expectation}.

\subsection{Ding--Iohara--Miki algebra}
We begin with introducing the DIM algebra following \cite{DingIohara1997,AwataFeiginHoshinoKanaiShiraishiYanagida2011}.
Let us introduce a formal power series
\begin{equation*}
	g(z)=\frac{G^{+}(z)}{G^{-}(z)}\in\mbb{F}[[z]],\ \ G^{\pm}(z)=(1-q^{\pm 1}z)(1-t^{\mp 1}z)(1-q^{\mp 1}t^{\pm 1}z).
\end{equation*}
The DIM algebra $\mcal{U}$ is a unital associative algebra over $\mbb{F}$ generated by currents
\begin{equation*}
	x^{\pm}(z)=\sum_{n\in\mbb{Z}}x^{\pm}_{n}z^{-n},\ \ \psi^{\pm}(z)=\sum_{\pm n\in\mbb{Z}_{\ge 0}}\psi^{\pm}_{n}z^{-n}
\end{equation*}
and an invertible central element $\gamma^{1/2}$ subject to relations
\begin{align*}
	\psi^{\pm}(z)\psi^{\pm}(w)&=\psi^{\pm}(w)\psi^{\pm}(z), \\
	\psi^{+}(z)\psi^{-}(w)&=\frac{g(\gamma w/z)}{g(\gamma^{-1} w/z)}\psi^{-}(w)\psi^{+}(z), \\
	\psi^{+}(z)x^{\pm}(w)&=g(\gamma^{\mp 1/2}w/z)^{\mp 1}x^{\pm}(w)\psi^{+}(z), \\
	\psi^{-}(z)x^{\pm}(w)&=g(\gamma^{\mp 1/2}w/z)^{\pm 1}x^{\pm}(w)\psi^{-}(z),\\
	[x^{+}(z),x^{-}(w)]&=\frac{(1-q)(1-t^{-1})}{1-qt^{-1}}(\delta(\gamma^{-1}z/w)\psi^{+}(\gamma^{1/2}w)-\delta (\gamma z/w)\psi^{-}(\gamma^{-1/2}w)), \\
	G^{\mp}(z/w)x^{\pm}(z)x^{\pm}(w)&=G^{\pm}(z/w)x^{\pm}(w)x^{\pm}(z).
\end{align*}
Here we set $\delta(z)=\sum_{n\in\mbb{Z}}z^{n}$ as the formal delta distribution.

The DIM algebra $\mcal{U}$ is a formal Hopf algebra with coproduct $\Delta$ defined by
\begin{align*}
	\Delta (\gamma^{1/2})&=\gamma^{1/2}\otimes \gamma^{1/2},\\
	\Delta (\psi^{\pm}(z))&=\psi^{\pm}(\gamma^{\pm 1/2}_{(2)}z)\otimes \psi^{\pm}(\gamma^{\mp 1/2}_{(1)}z), \\
	\Delta (x^{+}(z))&=x^{+}(z)\otimes 1 +\psi^{1}(\gamma^{1/2}_{(1)}z)\otimes x^{+}(\gamma_{(1)}z), \\
	\Delta (x^{-}(z))&=x^{-}(\gamma_{(2)}z)\otimes \psi^{+}(\gamma^{1/2}_{(2)}z)+1\otimes x^{-}(z).
\end{align*}
Here we used the notation $\gamma^{1/2}_{(1)}=\gamma^{1/2}\otimes 1$ and $\gamma^{1/2}_{(2)}=1\otimes \gamma^{1/2}$.

To consider a representation of $\mcal{U}$ on a Fock space $\widetilde{\mcal{F}}:=\mbb{C}(q^{1/4},t^{1/4})\otimes_{\mbb{F}}\mcal{F}$,
we introduce, in addition to the vertex operators $\eta(z)$ and $\xi (z)$, the following ones
\begin{equation*}
	\varphi^{\pm}(z):=\exp\left(\mp \sum_{n>0}\frac{1-t^{\pm n}}{n}(1-(t/q)^{n})(t/q)^{-n/4}a_{\pm n}z^{\mp n}\right).
\end{equation*}
Then the assignment $\rho:\mcal{U}\to\mrm{End}(\widetilde{\mcal{F}})$ defined by
\begin{equation*}
	\rho (\gamma^{1/2})=(t/q)^{1/4},\ \ \rho (\psi^{\pm}(z))=\varphi^{\pm}(z),\ \ \rho (x^{+}(z))=\eta (z),\ \ \rho (x^{-}(z))=\xi (z)
\end{equation*}
gives a {\it level one} representation of $\mcal{U}$ on $\widetilde{\mcal{F}}$.
As we saw in Theorem \ref{thm:Macdonald_operators_free_field}, the zero mode $x_{0}^{+}$ acts essentially as the first Macdonald operator on $\mcal{F}$ so that
\begin{equation*}
	\rho (x_{0}^{+})\ket{P_{\lambda}}=\what{\mcal{E}}_{1}(\lambda)\ket{P_{\lambda}}, \ \ \lambda\in\mbb{Y}.
\end{equation*}
See (\ref{eq:first_Macdonald_eigenvalue_conventional}) for definition of the eigenvalues.

Using the formal Hopf algebra structure of $\mcal{U}$, we can equip the $m$-fold tensor product $\widetilde{\mcal{F}}^{\otimes m}$, $m\in\mbb{N}$ with an action of $\mcal{U}$.
We set $\Delta^{(1)}=\mrm{Id}$, $\Delta^{(2)}=\Delta$ and, inductively,
$\Delta^{(m)}=(\mrm{Id}\otimes\cdots\otimes \mrm{Id}\otimes \Delta)\circ \Delta^{(m-1)}$, $m\in\mbb{N}$.
Then the assignment
\begin{equation*}
	\rho^{(m)}:=(\rho\otimes \cdots\otimes \rho)\circ \Delta^{(m)}:\mcal{U}\to\mrm{End}(\widetilde{\mcal{F}}^{\otimes m})
\end{equation*}
gives an {\it level $m$} representation of $\mcal{U}$ on $\widetilde{\mcal{F}}^{\otimes m}$.

\subsection{Generalized Macdonald functions}
We write, for $T\in\mrm{End}(\widetilde{\mcal{F}})$ and $i=1,\dots, m$,
\begin{equation*}
	T^{(i)}:=\mrm{Id}\otimes\cdots \otimes \mrm{Id}\otimes \overset{i}{\check{T}}\otimes \mrm{Id}\otimes \cdots \otimes \mrm{Id},
\end{equation*}
and set
\begin{equation*}
	\bm{\Gamma} (\bm{X})_{\pm}=\prod_{i=1}^{m}\Gamma (X^{(i)})_{\pm}^{(i)},\ \ \bm{X}=(X^{(1)},\dots, X^{(m)}).
\end{equation*}
Then the isomorphisms $\iota^{\otimes m}:\widetilde{\mcal{F}}^{\otimes m}\to \widetilde{\Lambda}_{X^{(1)}}\otimes\cdots\otimes \widetilde{\Lambda}_{X^{(m)}}$
and $(\iota^{\dagger})^{\otimes m}:(\widetilde{\mcal{F}}^{\dagger})^{\otimes m}\to \widetilde{\Lambda}_{X^{(1)}}\otimes\cdots\otimes \widetilde{\Lambda}_{X^{(m)}}$,
where $\widetilde{\Lambda}=\mbb{C}(q^{1/4},t^{1/4})\otimes_{\mbb{F}}\Lambda$,
are identified with
\begin{equation*}
	\iota^{\otimes m}=\bra{\bm{0}}\bm{\Gamma}(\bm{X})_{+},\ \ (\iota^{\dagger})^{\otimes m}=\bm{\Gamma}(\bm{X})_{-}\ket{\bm{0}},
\end{equation*}
where we set $\ket{\bm{0}}=\ket{0}\otimes\cdots\otimes\ket{0}$ and $\bra{\bm{0}}=\bra{0}\otimes\cdots\otimes \bra{0}$.

We write an $m$-tuple of partitions as $\bm{\lambda}=(\lambda^{(1)},\dots, \lambda^{(m)})\in\mbb{Y}^{m}$.
An analogue of the dominance order on $\mbb{Y}^{m}$ is defined so that,
for $\bm{\lambda}$, $\bm{\mu}\in\mbb{Y}^{m}$, we say that $\bm{\lambda}\ge \bm{\mu}$ if
\begin{align*}
	|\lambda^{(1)}|+\cdots |\lambda^{(j-1)}|+\sum_{k=1}^{i}\lambda^{(j)}_{k}\ge
	|\mu^{(1)}|+\cdots +|\mu^{(j-1)}|+\sum_{k=1}^{i}\mu^{(j)}_{k}
\end{align*}
holds for all $i\ge 1$ and $j=1,\dots, m$.
For a monomial symmetric function $m_{\lambda}$, $\lambda\in\mbb{Y}$, we write $\bra{m_{\lambda}}=(\iota^{\dagger})^{-1}(m_{\lambda})$ and set
\begin{equation*}
	\bra{m_{\bm{\lambda}}}:=\bra{m_{\lambda^{(1)}}}\otimes\cdots\otimes\bra{m_{\lambda^{(m)}}}\in (\mcal{F}^{\dagger})^{\otimes m},\ \ \bm{\lambda}\in\mbb{Y}^{m}.
\end{equation*}

The following proposition was presented in \cite{AwataFeiginHoshinoKanaiShiraishiYanagida2011} (see also \cite{Ohkubo2017,FukudaOhkuboShiraishi2019, MironovMorozov2019} for recent studies).
\begin{prop}[{\cite{AwataFeiginHoshinoKanaiShiraishiYanagida2011}}]
\label{prop:char_generalized_Macdonald}
For an $m$-tuple of partitions $\bm{\lambda}\in\mbb{Y}^{m}$, a vector $\bra{P_{\bm{\lambda}}}\in (\widetilde{\mcal{F}}^{\dagger})^{\otimes m}$ is uniquely determined by
the following properties:
\begin{align*}
	\bra{P_{\bm{\lambda}}}&=\bra{m_{\bm{\lambda}}}+\sum_{\bm{\mu}<\bm{\lambda}}c_{\bm{\lambda}\bm{\mu}}\bra{m_{\bm{\mu}}},\ \ c_{\bm{\lambda}\bm{\mu}}\in\mbb{C}(q^{1/4},t^{1/4}),\\
	\bra{P_{\bm{\lambda}}}X^{+}_{0}&=\what{\mcal{E}}^{(m)}_{1}(\bm{\lambda})\bra{P_{\bm{\lambda}}},\ \ \what{\mcal{E}}^{(m)}_{1}(\bm{\lambda})=\sum_{i=1}^{m}\what{\mcal{E}}_{1}(\lambda^{(i)}),
\end{align*}
where we set $X^{+}_{0}:=\rho^{(m)}(x^{+}_{0})$.
The collection $\{\bra{P_{\bm{\lambda}}}|\bm{\lambda}\in\mbb{Y}^{m}\}$ forms a basis of $(\widetilde{\mcal{F}}^{\dagger})^{\otimes m}$.
\end{prop}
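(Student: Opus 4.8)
The plan is to characterize $\bra{P_{\bm{\lambda}}}$ by exactly the triangularity-plus-eigenvalue mechanism that defines the ordinary Macdonald functions, now carried out on $(\widetilde{\mcal{F}}^{\dagger})^{\otimes m}$ with $X^{+}_{0}=\rho^{(m)}(x^{+}_{0})$ in the role of the first Macdonald operator. First I would make $X^{+}_{0}$ explicit by iterating the coproduct $\Delta(x^{+}(z))=x^{+}(z)\otimes 1+\psi^{+}(\gamma^{1/2}_{(1)}z)\otimes x^{+}(\gamma_{(1)}z)$ and extracting the coefficient of $z^{0}$. Under $\rho$ this sends $x^{+}(z)$ to $\eta(z)$ and $\psi^{+}(z)$ to $\varphi^{+}(z)$, so $X^{+}_{0}$ becomes a sum of $m$ terms in which the $j$-th factor carries a mode of $\eta$ while the earlier factors are dressed by modes of $\varphi^{+}$ (with the prescribed $(t/q)^{1/4}$-shifts). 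Since $\varphi^{+}(z)=1+O(z^{-1})$ in modes that only lower the degree, the degree-preserving part of $X^{+}_{0}$ is exactly $\sum_{j=1}^{m}\eta_{0}^{(j)}$, whereas every remaining mode transports degree from an earlier tensor slot to a later one.

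Next I would read off the triangularity. On a single factor, Theorem \ref{thm:Macdonald_operators_free_field} together with the identification $\eta_{0}=\mcal{O}(\what{\mcal{E}}_{1})$ shows that $\eta_{0}$ acts on the monomial basis upper-triangularly in dominance order with diagonal entry $\what{\mcal{E}}_{1}(\lambda^{(j)})$. Because the order on $\mbb{Y}^{m}$ restricts to dominance when a single component is varied, and because the degree-transporting off-diagonal modes strictly decrease the accumulated partial weights $|\lambda^{(1)}|+\cdots+|\lambda^{(j-1)}|+\sum_{k\le i}\lambda^{(j)}_{k}$, the operator $X^{+}_{0}$ is upper-triangular with respect to $\ge$ on $\{\bra{m_{\bm{\mu}}}\}$, with diagonal entry $\sum_{j=1}^{m}\what{\mcal{E}}_{1}(\lambda^{(j)})=\what{\mcal{E}}^{(m)}_{1}(\bm{\lambda})$. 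Restricting to a fixed total degree $|\bm{\lambda}|$ makes this a finite triangular system; solving $\bra{P_{\bm{\lambda}}}(X^{+}_{0}-\what{\mcal{E}}^{(m)}_{1}(\bm{\lambda}))=0$ with leading term $\bra{m_{\bm{\lambda}}}$ recursively down the order produces the coefficients $c_{\bm{\lambda}\bm{\mu}}$, and the resulting unitriangular transition matrix simultaneously yields existence and the basis claim.

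The hard part is uniqueness, which hinges on separation of the diagonal values: the recursion determines $c_{\bm{\lambda}\bm{\mu}}$ only when $\what{\mcal{E}}^{(m)}_{1}(\bm{\lambda})\neq\what{\mcal{E}}^{(m)}_{1}(\bm{\mu})$ for every $\bm{\mu}<\bm{\lambda}$. I expect this to be the genuine obstacle, because $\what{\mcal{E}}^{(m)}_{1}(\bm{\lambda})=\sum_{j}\what{\mcal{E}}_{1}(\lambda^{(j)})$ is symmetric under permuting the components: its $\bm{\lambda}$-dependence records, for each row index $i$, only the multiset $\{\lambda^{(1)}_{i},\dots,\lambda^{(m)}_{i}\}$. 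Consequently comparable but distinct tuples can share a value — already for $m=2$ one has $((2),(1))>((1),(2))$ with equal eigenvalue — so $X^{+}_{0}$ alone does not have simple spectrum and cannot by itself force uniqueness. To close this gap I would either equip the level-one representations with distinct spectral parameters $u_{1},\dots,u_{m}$, replacing the symmetric value by $\sum_{j}u_{j}\what{\mcal{E}}_{1}(\lambda^{(j)})$ and separating comparable tuples for generic $u_{j}$, or supplement $X^{+}_{0}$ with the further commuting Macdonald-type operators supplied by $\mcal{U}$ so that the joint spectrum is simple for generic $q,t$. This is exactly where the Hopf-algebra structure of the DIM algebra is indispensable, and it is the heart of the construction of \cite{AwataFeiginHoshinoKanaiShiraishiYanagida2011}.
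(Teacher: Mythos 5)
The paper does not prove this proposition at all---it imports it from \cite{AwataFeiginHoshinoKanaiShiraishiYanagida2011}---so your proposal can only be measured against the argument in that cited source, and on that score you have reconstructed it essentially correctly: the proof there is exactly the triangularity-plus-eigenvalue recursion you describe, with the coproduct making $X^{+}_{0}$ a sum of $m$ dressed terms whose slot-degree-preserving part is $\sum_{j}\eta_{0}^{(j)}$ and whose remaining modes are strictly lowering for the partial order. More importantly, your central objection is genuine and sharp. With $m$ identical copies of the level-one representation $\rho$, as the paper sets things up, the diagonal entry of $X^{+}_{0}$ on $\bra{m_{\bm{\mu}}}$ really is the slot-symmetric sum $\sum_{i}\what{\mcal{E}}_{1}(\mu^{(i)})$ (the argument shifts $p^{-(i-1)/2}$ drop out of the $z^{0}$ coefficient), and your pair $\bm{\lambda}=((2),(1))>\bm{\mu}=((1),(2))$ is indeed strictly comparable with equal eigenvalue: the $j=1$ partial weights give $2\ge 1$ and the $j=2$ ones give $3\ge 3$. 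So either the recursion at the step $\bm{\mu}$ is obstructed (existence fails) or the coefficient $c_{\bm{\lambda}\bm{\mu}}$ is free (uniqueness fails); the proposition cannot hold as literally transcribed. In \cite{AwataFeiginHoshinoKanaiShiraishiYanagida2011} the $m$ Fock factors carry distinct generic spectral parameters $u_{1},\dots,u_{m}$, the eigenvalue is $\sum_{j}u_{j}\what{\mcal{E}}_{1}(\lambda^{(j)})$, and separation on comparable tuples follows because $\what{\mcal{E}}_{1}$ is injective on $\mbb{Y}$ for generic $q,t$---precisely the first of your two proposed repairs. You have located a real imprecision in the paper's statement, not a gap in your own plan.

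One slip worth fixing: you took the dressing in $\Delta(x^{+}(z))$ to be by $\psi^{+}\mapsto\varphi^{+}$, following the paper's misprint ``$\psi^{1}$''; the correct coproduct has $\psi^{-}\mapsto\varphi^{-}$, as the paper itself uses in the operators $\tilde{\Lambda}_{i}(z)$ in the proof of Theorem \ref{thm:generalized_Mcdonald_expectation}. This is not cosmetic for your triangularity step: $\varphi^{+}$ carries only annihilation modes $a_{n}$, $n>0$, so under the \emph{right} action on bras a $\varphi^{+}$ dressing would transport degree from later slots to earlier ones, i.e.\ produce terms \emph{above} $\bm{\mu}$ in the order, contradicting the expansion $\bra{P_{\bm{\lambda}}}=\bra{m_{\bm{\lambda}}}+\sum_{\bm{\mu}<\bm{\lambda}}c_{\bm{\lambda}\bm{\mu}}\bra{m_{\bm{\mu}}}$. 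With the correct $\varphi^{-}$ dressing (creation modes at the earlier slots, compensated by net-lowering modes of $\eta$ at slot $i$), the right action on bras moves weight from earlier slots to later ones, which strictly decreases the accumulated partial weights---this is the direction you asserted and used, so your conclusion stands, but you should derive it from the correct dressing rather than from the one you wrote down.
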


\begin{defn}
\label{def:generalized_Macdonald}
Let $\{\ket{Q_{\bm{\lambda}}}|\bm{\lambda}\in\mbb{Y}^{m}\}$ be the basis of $\widetilde{\mcal{F}}^{\otimes m}$ dual to $\{\bra{P_{\bm{\lambda}}}|\bm{\lambda}\in\mbb{Y}^{m}\}$
so that $\braket{P_{\bm{\lambda}}|Q_{\bm{\mu}}}=\delta_{\bm{\lambda}\bm{\mu}}$, $\bm{\lambda},\bm{\mu}\in\mbb{Y}^{m}$.
We also set $P_{\bm{\lambda}}:=(\iota^{\dagger})^{\otimes m}(\bra{P_{\bm{\lambda}}})\in\Lambda^{\otimes m}$
and $Q_{\bm{\lambda}}:=\iota^{\otimes m}(\ket{Q_{\bm{\lambda}}})\in\Lambda^{\otimes m}$, $\bm{\lambda}\in\mbb{Y}^{m}$.
\end{defn}

\begin{rem}
Differently from the usual Macdonald symmetric functions, $P_{\bm{\lambda}}$ and $Q_{\bm{\lambda}}$ are not proportional to each other.
\end{rem}

\begin{prop}
\label{prop:expansion_generalized}
We have the following expansions:
\begin{equation*}
	\bm{\Gamma}(\bm{X})_{-}\ket{\bm{0}}=\sum_{\bm{\lambda}\in\mbb{Y}^{m}}P_{\bm{\lambda}}(\bm{X})\ket{Q_{\bm{\lambda}}},\ \ 
	\bra{\bm{0}}\bm{\Gamma}(\bm{X})_{+}=\sum_{\bm{\lambda}\in\mbb{Y}^{m}}Q_{\bm{\lambda}}(\bm{X})\bra{P_{\bm{\lambda}}}.
\end{equation*}
\end{prop}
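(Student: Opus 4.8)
The plan is to recognize both identities as nothing more than the expansion of a vacuum-generated vector in a dual basis, followed by identifying the resulting coefficients with the generalized Macdonald functions through the operator descriptions $\iota^{\otimes m}=\bra{\bm{0}}\bm{\Gamma}(\bm{X})_{+}$ and $(\iota^{\dagger})^{\otimes m}=\bm{\Gamma}(\bm{X})_{-}\ket{\bm{0}}$ recorded just above the statement. The two formulas are mirror images of one another, so I would prove the first in detail and then obtain the second by interchanging the roles of $\bm{\Gamma}(\bm{X})_{+}$ with $\bm{\Gamma}(\bm{X})_{-}$ and of the pair $(\ket{Q_{\bm{\lambda}}},\bra{P_{\bm{\lambda}}})$.

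First I would invoke Definition \ref{def:generalized_Macdonald}, where $\{\bra{P_{\bm{\lambda}}}\mid\bm{\lambda}\in\mbb{Y}^{m}\}$ and $\{\ket{Q_{\bm{\lambda}}}\mid\bm{\lambda}\in\mbb{Y}^{m}\}$ are declared to be mutually dual bases, $\braket{P_{\bm{\lambda}}|Q_{\bm{\mu}}}=\delta_{\bm{\lambda}\bm{\mu}}$. By this duality, any element of $\widetilde{\mcal{F}}^{\otimes m}$ with coefficients in the symmetric functions of $\bm{X}$ has a unique expansion $\sum_{\bm{\lambda}}c_{\bm{\lambda}}(\bm{X})\ket{Q_{\bm{\lambda}}}$, whose coefficients are recovered by pairing on the left with $\bra{P_{\bm{\lambda}}}$. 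Applying this to $\bm{\Gamma}(\bm{X})_{-}\ket{\bm{0}}$ reduces the first identity to the computation of the scalar $\braket{P_{\bm{\lambda}}|\bm{\Gamma}(\bm{X})_{-}|\bm{0}}$ for each $\bm{\lambda}$.

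Second I would identify this matrix element with $P_{\bm{\lambda}}(\bm{X})$. The key input is the identification $(\iota^{\dagger})^{\otimes m}=\bm{\Gamma}(\bm{X})_{-}\ket{\bm{0}}$ together with the defining relation $P_{\bm{\lambda}}=(\iota^{\dagger})^{\otimes m}(\bra{P_{\bm{\lambda}}})$, which combine to give $\braket{P_{\bm{\lambda}}|\bm{\Gamma}(\bm{X})_{-}|\bm{0}}=(\iota^{\dagger})^{\otimes m}(\bra{P_{\bm{\lambda}}})=P_{\bm{\lambda}}(\bm{X})$; substituting into the expansion finishes the first formula. The second follows verbatim from $\iota^{\otimes m}=\bra{\bm{0}}\bm{\Gamma}(\bm{X})_{+}$ and $Q_{\bm{\lambda}}=\iota^{\otimes m}(\ket{Q_{\bm{\lambda}}})$, reading off $\braket{\bm{0}|\bm{\Gamma}(\bm{X})_{+}|Q_{\bm{\lambda}}}=Q_{\bm{\lambda}}(\bm{X})$ as the coefficient of $\bra{P_{\bm{\lambda}}}$.

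The computation is entirely formal, so there is no real analytic obstacle; the only point requiring care is to make precise the completion in which $\bm{\Gamma}(\bm{X})_{\pm}$ acting on the vacuum, and hence the infinite basis expansion, is understood. This is exactly the matter already underlying the level-one statements based on Proposition \ref{prop:iota_vertex_expression} and the expansion $\Gamma(X)_{-}\ket{0}=\sum_{\lambda}P_{\lambda}(X)\ket{Q_{\lambda}}$, so I would simply remark that the grading by degree makes each pairing $\braket{P_{\bm{\lambda}}|\bm{\Gamma}(\bm{X})_{-}|\bm{0}}$ a finite sum, whence the termwise coefficient extraction against $\bra{P_{\bm{\lambda}}}$ is well defined.
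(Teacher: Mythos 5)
Your proposal is correct and is precisely the argument the paper leaves implicit: Proposition \ref{prop:expansion_generalized} is stated without proof because it follows at once from expanding in the dual bases of Definition \ref{def:generalized_Macdonald} and reading off coefficients via $\iota^{\otimes m}=\bra{\bm{0}}\bm{\Gamma}(\bm{X})_{+}$ and $(\iota^{\dagger})^{\otimes m}=\bm{\Gamma}(\bm{X})_{-}\ket{\bm{0}}$, exactly as you do. Your closing remark on degreewise finiteness of the pairings is a sound way to justify the formal expansion and is consistent with how the paper treats the level-one case.
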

As a corollary of this proposition, we obtain the Cauchy-type identity:
\begin{cor}
We have
\begin{equation*}
	\sum_{\bm{\lambda}\in\mbb{Y}^{m}}P_{\bm{\lambda}}(\bm{X})Q_{\bm{\lambda}}(\bm{Y})=\Pi^{(m)}(\bm{X},\bm{Y}):=\prod_{i=1}^{m}\Pi (X^{(i)},Y^{(i)}).
\end{equation*}
\end{cor}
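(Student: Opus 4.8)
The plan is to evaluate the vacuum matrix element $\braket{\bm{0}|\bm{\Gamma}(\bm{Y})_{+}\bm{\Gamma}(\bm{X})_{-}|\bm{0}}$ in two independent ways, in direct analogy with how the single-variable Cauchy-type identity was reproduced earlier from $\braket{0|\Gamma(Y)_{+}\Gamma(X)_{-}|0}$. Equating the two evaluations will yield the claimed identity, so the corollary follows at once from Proposition \ref{prop:expansion_generalized} together with the tensor-product structure of $\bm{\Gamma}(\bm{X})_{\pm}$.

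First I would expand both ends of the matrix element using Proposition \ref{prop:expansion_generalized}. Writing $\bra{\bm{0}}\bm{\Gamma}(\bm{Y})_{+}=\sum_{\bm{\lambda}}Q_{\bm{\lambda}}(\bm{Y})\bra{P_{\bm{\lambda}}}$ and $\bm{\Gamma}(\bm{X})_{-}\ket{\bm{0}}=\sum_{\bm{\mu}}P_{\bm{\mu}}(\bm{X})\ket{Q_{\bm{\mu}}}$, and then invoking the duality $\braket{P_{\bm{\lambda}}|Q_{\bm{\mu}}}=\delta_{\bm{\lambda}\bm{\mu}}$ from Definition \ref{def:generalized_Macdonald}, the double sum collapses to
\begin{equation*}
	\braket{\bm{0}|\bm{\Gamma}(\bm{Y})_{+}\bm{\Gamma}(\bm{X})_{-}|\bm{0}}=\sum_{\bm{\lambda}\in\mbb{Y}^{m}}P_{\bm{\lambda}}(\bm{X})Q_{\bm{\lambda}}(\bm{Y}),
\end{equation*}
which is precisely the left-hand side of the assertion.

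Second I would evaluate the same matrix element directly from the factorization $\bm{\Gamma}(\bm{X})_{\pm}=\prod_{i=1}^{m}\Gamma(X^{(i)})_{\pm}^{(i)}$. Since $\Gamma(X^{(i)})_{\pm}^{(i)}$ acts as the identity on every tensor slot except the $i$-th, operators attached to distinct slots commute, and the vacuum $\ket{\bm{0}}=\ket{0}\otimes\cdots\otimes\ket{0}$ factors through the slots, so that
\begin{equation*}
	\braket{\bm{0}|\bm{\Gamma}(\bm{Y})_{+}\bm{\Gamma}(\bm{X})_{-}|\bm{0}}=\prod_{i=1}^{m}\braket{0|\Gamma(Y^{(i)})_{+}\Gamma(X^{(i)})_{-}|0}=\prod_{i=1}^{m}\Pi(X^{(i)},Y^{(i)}),
\end{equation*}
where the last equality is exactly the single-Fock-space computation recalled earlier, carried out via the Baker--Campbell--Hausdorff formula. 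The right-hand side is $\Pi^{(m)}(\bm{X},\bm{Y})$ by definition.

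There is no genuine obstacle here: the argument is a faithful transcription of the single-variable case, and the only point requiring (elementary) care is the commutativity of the slot-wise operators. This is immediate because $\Gamma(X^{(i)})_{\pm}^{(i)}$ and $\Gamma(Y^{(j)})_{\pm}^{(j)}$ act on disjoint tensor factors when $i\neq j$, while for $i=j$ they combine into the familiar single-Fock-space product whose vacuum expectation reproduces $\Pi$. Comparing the two evaluations of $\braket{\bm{0}|\bm{\Gamma}(\bm{Y})_{+}\bm{\Gamma}(\bm{X})_{-}|\bm{0}}$ completes the proof.
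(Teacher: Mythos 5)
Your proposal is correct and follows essentially the same route as the paper: both evaluate the vacuum matrix element $\braket{\bm{0}|\bm{\Gamma}(\bm{Y})_{+}\bm{\Gamma}(\bm{X})_{-}|\bm{0}}$ in two ways, once via the expansions of Proposition \ref{prop:expansion_generalized} together with the duality $\braket{P_{\bm{\lambda}}|Q_{\bm{\mu}}}=\delta_{\bm{\lambda}\bm{\mu}}$, and once via the slot-wise factorization reducing to the single-Fock-space Cauchy identity. Your added remark on the commutativity of operators acting on distinct tensor factors is a correct (if implicit in the paper) justification of the second evaluation.
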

\begin{proof}
Computation of
\begin{equation*}
	\braket{\bm{0}|\bm{\Gamma}(\bm{Y})_{+}\bm{\Gamma}(\bm{X})_{-}|\bm{0}}
\end{equation*}
in two ways proves the desired result.
On one hand, we have
\begin{equation*}
	\braket{\bm{0}|\bm{\Gamma}(\bm{Y})_{+}\bm{\Gamma}(\bm{X})_{-}|\bm{0}}=\sum_{\bm{\lambda}\in\mbb{Y}^{m}}P_{\bm{\lambda}}(\bm{X})Q_{\bm{\lambda}}(\bm{Y}),
\end{equation*}
while, on the other hand, we also have
\begin{equation*}
	\braket{\bm{0}|\bm{\Gamma}(\bm{Y})_{+}\bm{\Gamma}(\bm{X})_{-}|\bm{0}}=\prod_{i=1}^{m}\braket{0|\Gamma (Y^{(i)})_{+}\Gamma (X^{(i)})_{-}|0}
	=\Pi^{(m)}(\bm{X},\bm{Y}).
\end{equation*}
\end{proof}

\subsection{Generalized Macdonald measure}
Now we define {\it the level $m$ generalized Macdonald measure} on $\mbb{Y}^{m}$.
\begin{defn}
The level $m$ generalized Macdonald measure is a probability measure on $\mbb{Y}^{m}$ so that
the weight of $\bm{\lambda}\in\mbb{Y}^{m}$ is given by
\begin{equation*}
	\mbb{GM}^{m}_{q,t}(\bm{\lambda}):=\frac{1}{\Pi^{(m)}(\bm{X},\bm{Y})}P_{\bm{\lambda}}(\bm{X})Q_{\bm{\lambda}}(\bm{Y}).
\end{equation*}
We write $\mbb{GE}^{m}_{q,t}$ for the expectation value under the level $m$ generalized Macdonald measure.
\end{defn}

\begin{proof}[Proof of Theorem \ref{thm:generalized_Mcdonald_expectation}]
We compute the quantity
\begin{equation*}
	\braket{\bm{0}|\bm{\Gamma}(\bm{Y})_{+}X^{+}_{0}\bm{\Gamma}(\bm{X})_{-}|\bm{0}}
\end{equation*}
in two ways.
On one hand, it is shown that
\begin{equation*}
	\braket{\bm{0}|\bm{\Gamma}(\bm{Y})_{+}X^{+}_{0}\bm{\Gamma}(\bm{X})_{-}|\bm{0}}=\Pi^{(m)}(\bm{X},\bm{Y})\mbb{GE}^{m}_{q,t}[\underline{\mcal{E}}^{(m)}]
\end{equation*}
from the definition of level $m$ generalized Macdonald functions presented in Proposition \ref{prop:char_generalized_Macdonald}, Definition \ref{def:generalized_Macdonald}, and the expansions in Proposition \ref{prop:expansion_generalized}.

On the other hand, we can also write
\begin{equation*}
	X^{+}_{0}=\int \frac{dz}{2\pi\sqrt{-1}z}X^{+}(z),\ \ X^{+}(z)=\rho^{(m)}(x^{+}(z))=\sum_{i=1}^{m}\tilde{\Lambda}_{i}(z),
\end{equation*}
where we set
\begin{equation*}
	\tilde{\Lambda}_{i}(z)=\varphi^{-}(p^{-1/4}z)\otimes \varphi^{-}(p^{-3/4}z)\otimes \cdots \otimes \varphi^{-}(p^{-(2i-3)/4}z)\otimes \overset{i}{\check{\eta}}(p^{-(i-1)/2}z)
	\otimes \mrm{Id}\otimes \cdots \otimes \mrm{Id}
\end{equation*}
for $i=1,\dots, m$.
It can be verified that $\Gamma (X)_{+}$ and $\varphi^{-}(z)$ exhibit OPE
\begin{equation*}
	\Gamma (X)_{+}\varphi^{-}(z)=\prod_{k\ge 1}\frac{(1-p^{-3/4}x_{k}z)(1-t^{-1}p^{1/4}x_{k}z)}{(1-p^{1/4}x_{k}z)(1-t^{-1}p^{-3/4}x_{k}z)}\varphi^{-}(z)\Gamma (X)_{+}.
\end{equation*}
Then a similar argument as in Section \ref{sect:expectation} gives the desire result.
\end{proof}

\appendix
\section{Proof of Theorem \ref{thm:free_field_G_inverse}}
\label{app:proof_free_field_G_inverse}
Fix $r \in\mbb{Z}_{\ge 1}$.
What we show is the identity
\begin{equation*}
	\bra{0}\Gamma (X)_{+}\what{G}_{r}(q^{-1},t^{-1})=G_{r}(q^{-1},t^{-1})\bra{0}\Gamma (X)_{+},
\end{equation*}
which is equivalent to
\begin{equation*}
	\bra{0}\Gamma _{n} (X)_{+}\what{G}_{r}(q^{-1},t^{-1})=G^{(n)}_{r}(q^{-1},t^{-1})\bra{0}\Gamma_{n} (X)_{+},\ \ n=1,2,\dots,
\end{equation*}
where
\begin{equation*}
	\Gamma_{n}(X)_{+}=\exp\left(\sum_{m>0}\frac{1-t^{m}}{1-q^{m}}\frac{p_{m}^{(n)}(X)}{m}a_{m}\right)
\end{equation*}
is the $n$-variable reduction of $\Gamma (X)_{+}$.

The following lemma can be checked by standard computation:
\begin{lem}
We have
\begin{align*}
	\Gamma_{n}(X)_{+}\xi(z)&=\prod_{i=1}^{n}\frac{1-t^{1/2}q^{-1/2}x_{i}z}{1-t^{-1/2}q^{-1/2}x_{i}z}\xi (z)\Gamma _{n}(X)_{+}.
\end{align*}
\end{lem}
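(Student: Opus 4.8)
The plan is to establish this operator product expansion by a direct Baker--Campbell--Hausdorff computation, in exactly the same spirit as the $\eta$-- and $\xi$--OPEs used in Section \ref{sect:expectation}. First I would factor the vertex operator as $\xi(z)=\xi(z)_{-}\xi(z)_{+}$, where $\xi(z)_{-}$ is the creation part built from the modes $a_{-k}$, $k>0$, and $\xi(z)_{+}$ the annihilation part built from $a_{k}$, $k>0$. Since $\Gamma_{n}(X)_{+}$ is the exponential of a linear combination of the modes $a_{m}$ with $m>0$ only, and positive modes commute among themselves by the Heisenberg relations (\ref{eq:deformed_Heisenberg}), it commutes with $\xi(z)_{+}$. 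Hence the whole content of the lemma reduces to commuting $\Gamma_{n}(X)_{+}$ past the single factor $\xi(z)_{-}$.

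For that step I would write $\Gamma_{n}(X)_{+}=e^{A}$ and $\xi(z)_{-}=e^{B}$ with
\begin{align*}
	A&=\sum_{m>0}\frac{1-t^{m}}{1-q^{m}}\frac{p^{(n)}_{m}(X)}{m}a_{m}, & B&=-\sum_{k>0}\frac{1-t^{-k}}{k}(t/q)^{k/2}a_{-k}z^{k},
\end{align*}
and invoke the identity $e^{A}e^{B}=e^{[A,B]}e^{B}e^{A}$, which holds because $[A,B]$ lands in the center $\mbb{F}c$ by (\ref{eq:deformed_Heisenberg}) and hence is a scalar on $\mcal{F}$. Using $[a_{m},a_{-k}]=m\frac{1-q^{m}}{1-t^{m}}\delta_{m,k}$, the double sum collapses to a single sum in which the factor $\frac{1-t^{m}}{1-q^{m}}$ cancels against $\frac{1-q^{m}}{1-t^{m}}$ together with the explicit $m$, leaving, after substituting $p^{(n)}_{m}(X)=\sum_{i=1}^{n}x_{i}^{m}$,
\begin{align*}
	[A,B]=-\sum_{i=1}^{n}\sum_{m>0}\frac{1}{m}\left((t^{1/2}q^{-1/2}x_{i}z)^{m}-(t^{-1/2}q^{-1/2}x_{i}z)^{m}\right),
\end{align*}
where the two terms arise from splitting $(1-t^{-m})(t/q)^{m/2}$ as $t^{m/2}q^{-m/2}-t^{-m/2}q^{-m/2}$.

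Recognizing each inner sum as $-\log(1-u)$, I obtain $e^{[A,B]}=\prod_{i=1}^{n}\frac{1-t^{1/2}q^{-1/2}x_{i}z}{1-t^{-1/2}q^{-1/2}x_{i}z}$, so that $\Gamma_{n}(X)_{+}\xi(z)_{-}$ equals this product times $\xi(z)_{-}\Gamma_{n}(X)_{+}$. Multiplying on the right by $\xi(z)_{+}$ and passing $\Gamma_{n}(X)_{+}$ back through it freely, by the first paragraph, reassembles $\xi(z)$ and yields the stated formula. There is no genuine obstacle here: the computation is routine, and the only point demanding care is the bookkeeping of the half-integer powers $(t/q)^{k/2}$ against the factor $1-t^{-m}$, which must be split precisely as above so that the two resulting geometric series exponentiate into the numerator and denominator of the claimed product.
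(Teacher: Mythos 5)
Your proof is correct and is exactly the ``standard computation'' the paper alludes to when stating this lemma without proof: the splitting $\xi(z)=\xi(z)_{-}\xi(z)_{+}$, the observation that $\Gamma_{n}(X)_{+}$ commutes with $\xi(z)_{+}$, and the Baker--Campbell--Hausdorff identity $e^{A}e^{B}=e^{[A,B]}e^{B}e^{A}$ with the central commutator collapsing via $[a_{m},a_{-k}]=m\frac{1-q^{m}}{1-t^{m}}\delta_{m,k}c$ are precisely the manipulations the paper itself uses for its other OPEs (for instance in the Cauchy-identity remark and in the proofs of Section \ref{sect:expectation}). Your bookkeeping of $(1-t^{-m})(t/q)^{m/2}=t^{m/2}q^{-m/2}-t^{-m/2}q^{-m/2}$ and the exponentiation of the two logarithmic series into the numerator and denominator of the product are accurate, so there is nothing to correct.
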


By using this, we can see that
\begin{align*}
	&\bra{0}\Gamma_{n}(X)_{+}\what{G}_{r}(q^{-1},t^{-1}) \\
	&=\frac{(-1)^{r}q^{-\binom{r}{2}}}{(q;q)_{r}}\int \left(\prod_{i=1}^{r}\frac{dz_{i}}{2\pi\sqrt{-1}z_{i}}\right) \left(\prod_{1\le i<j\le r}\frac{1-z_{j}/z_{i}}{1-q^{-1}z_{j}/z_{i}}\right)\\
	&\hspace{80pt}\times\left(\prod_{i=1}^{n}\prod_{j=1}^{r}\frac{1-t^{1/2}q^{-1/2}x_{i}z_{j}}{1-t^{-1/2}q^{-1/2}x_{i}z_{j}}\right)
		\bra{0}\no{\xi(z_{1})\cdots \xi(z_{r})}\Gamma_{n}(X)_{+}
\end{align*}

As a generalization, we introduce the following object:
for $\mu=(\mu_{1},\dots, \mu_{n})\in\mbb{Z}^{n}$,
\begin{align}
\label{eq:G_general}
	&\mfrak{G}_{r}^{(\mu)}(q^{-1},t^{-1}) \\
	&:=\int \left(\prod_{i=1}^{r}\frac{dz_{i}}{2\pi\sqrt{-1}z_{i}}\right) \left(\prod_{1\le i<j\le r}\frac{1-z_{j}/z_{i}}{1-q^{-1}z_{j}/z_{i}}\right)
	\left(\prod_{i=1}^{n}\prod_{j=1}^{r}\frac{1-t^{1/2}q^{\mu_{i}-1/2}x_{i}z_{j}}{1-t^{-1/2}q^{-1/2}x_{i}z_{j}}\right) \notag \\
	&\hspace{50pt}\times\bra{0}\no{\xi(z_{1})\cdots \xi(z_{r})}\Gamma_{n}(X)_{+}. \notag
\end{align}
Correspondingly, we set
\begin{align}
\label{eq:H_general}
	&H_{r}^{(n),(\mu)}(q^{-1},t^{-1})\\
	&:=\sum_{\substack{\nu\in(\mbb{Z}_{\ge 0})^{n}\\ |\nu|=r}}\left(\prod_{1\le i<j\le n}\frac{q^{-\nu_{i}}x_{i}-q^{-\nu_{j}}x_{j}}{x_{i}-x_{j}}\right)
		\left(\prod_{i,j=1}^{n}\frac{(t^{-1}x_{i}/q^{\mu_{j}}x_{j};q^{-1})_{\nu_{i}}}{(q^{-1}x_{i}/x_{j};q^{-1})_{\nu_{i}}}\right)
		\prod_{i=1}^{n}T_{q^{-1},x_{i}}^{\nu_{i}}.\notag
\end{align}
Then Theorem \ref{thm:free_field_G_inverse} is the case when $\mu=(0,\dots, 0)$ of the following one:
\begin{thm}
\label{thm:free_field_G_general}
For $\mu\in \mbb{Z}^{n}$ and $r=1,2,\dots$, we have
\begin{align*}
	&\mfrak{G}_{r}^{(\mu)}(q^{-1},t^{-1})\\
	&=t^{rn}q^{r|\mu|}\sum_{l=0}^{r}(-1)^{l}q^{\binom{l}{2}}q^{l(r-l)}(q^{-r+l-1};q^{-1})_{l}H_{l}^{(n),(\mu)}(q^{-1},t^{-1})
		\bra{0}\Gamma_{n}(X)_{+}.
\end{align*}
\end{thm}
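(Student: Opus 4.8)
The plan is to evaluate the contour integral defining $\mfrak{G}_{r}^{(\mu)}(q^{-1},t^{-1})$ directly as an iterated constant-term extraction and to show that its residues reorganize into the difference operators $H_{l}^{(n),(\mu)}(q^{-1},t^{-1})$. First I would use $\bra{0}\xi(\bm{z})_{-}=\bra{0}$ together with the fact that $\xi(\bm{z})_{+}$ commutes with $\Gamma_{n}(X)_{+}$ (both involve only positive modes) to rewrite the matrix element inside the integral as
\[
\bra{0}\no{\xi(z_1)\cdots\xi(z_r)}\Gamma_n(X)_+ = \bra{0}\Gamma_n(X)_+\,\xi(\bm{z})_+ ,
\]
so the integrand becomes an explicit rational function of $z_1,\dots,z_r,x_1,\dots,x_n$ times the covector-valued series $\bra{0}\Gamma_n(X)_+\xi(\bm{z})_+$ in the $z_j^{-1}$. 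Taking the constant term in each $z_j$ then amounts to summing residues, the relevant poles being those of $\prod_{i,j}(1-t^{-1/2}q^{-1/2}x_i z_j)^{-1}$, located at $z_j=(qt)^{1/2}x_i^{-1}$, together with the nested poles $z_j=q^{-1}z_{j'}$ coming from $\prod_{i<j}(1-q^{-1}z_j/z_i)^{-1}$.

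The key computational observation I would establish is that localizing a single variable at $z_j=(qt)^{1/2}x_i^{-1}$ turns $\xi(z_j)_+$ into the operator that, commuted through $\Gamma_n(X)_+$, acts precisely as the $q^{-1}$-shift $T_{q^{-1},x_i}$ on $\bra{0}\Gamma_n(X)_+$. One checks this by comparing $a_m$-coefficients, since
\[
T_{q^{-1},x_i}\Gamma_n(X)_+ = \exp\left(\sum_{m>0}\frac{1-t^m}{m}q^{-m}x_i^m a_m\right)\Gamma_n(X)_+
\]
agrees with $\xi(z)_+$ evaluated at $z=(qt)^{1/2}x_i^{-1}$. A chain of nested residues $z=(qt)^{1/2}x_i^{-1},\,q^{-1}(qt)^{1/2}x_i^{-1},\dots$ of length $\nu_i$ then produces the higher shift $T_{q^{-1},x_i}^{\nu_i}$, up to explicit $q$-powers generated by the geometric summation. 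Grouping the residue contributions by the multiplicity vector $\nu=(\nu_1,\dots,\nu_n)$ with $|\nu|=l$ should reconstruct the $\nu$-summand of $H_l^{(n),(\mu)}(q^{-1},t^{-1})$, the rational coefficients there arising from the residues of the OPE factors $\frac{1-z_j/z_i}{1-q^{-1}z_j/z_i}$ and of the remaining numerator and denominator factors evaluated at the pole.

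I would carry this out by induction on $r$, peeling off one integration variable at a time, and here the role of the auxiliary parameter $\mu$ becomes transparent. Once $z_j$ is localized over $x_i$ and the shift $x_i\mapsto q^{-1}x_i$ is performed, the factors $1-t^{1/2}q^{\mu_{i}-1/2}x_i z_{j'}$ governing the remaining variables see the shifted argument, which is exactly the effect of incrementing the $i$-th component of $\mu$. Thus the general-$\mu$ statement is the natural inductive hypothesis that lets the recursion close, and the special case $\mu=(0,\dots,0)$, after restoring the scalar prefactor relating $\mfrak{G}_r^{(0)}(q^{-1},t^{-1})$ to $\bra{0}\Gamma_n(X)_+\what{G}_r(q^{-1},t^{-1})$, recovers Theorem \ref{thm:free_field_G_inverse}.

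The step I expect to be the main obstacle is the residue bookkeeping underlying the last two paragraphs: one must track the nested pole chains, the signs and $q$-powers emitted by each geometric summation, and the contributions of the OPE factors, and then verify that the total $q,t$-dependent weight collapses for each $l$ exactly to $t^{rn}q^{r|\mu|}(-1)^l q^{\binom{l}{2}}q^{l(r-l)}(q^{-r+l-1};q^{-1})_l$. Matching these $q$-Pochhammer coefficients, which are the inverted-parameter analogues of those in Theorem \ref{thm:operator_G}, is where the delicate combinatorial identities enter and is the part demanding the most care.
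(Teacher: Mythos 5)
Your proposal is correct and follows essentially the same route as the paper's Appendix~\ref{app:proof_free_field_G_inverse}: your coefficient check that $\xi(z)_{+}$ specialized at $z=(qt)^{1/2}x_{i}^{-1}$ implements $T_{q^{-1},x_{i}}$ on $\bra{0}\Gamma_{n}(X)_{+}$ is exactly Lemma~\ref{lem:q-inv_shift}, and your plan of peeling off one integration variable, tracking the increment $\mu\mapsto\mu+\epsilon_{k}$ produced by each localization, and inducting on $r$ with the general-$\mu$ statement as hypothesis is precisely the paper's recursion $\mfrak{G}_{r}^{(\mu)}(q^{-1},t^{-1})=\sum_{k=1}^{n}(1-tq^{\mu_{k}})\prod_{i\neq k}\frac{x_{k}-tq^{\mu_{i}}x_{i}}{x_{k}-x_{i}}T_{q^{-1},x_{k}}\mfrak{G}_{r-1}^{(\mu+\epsilon_{k})}(q^{-1},t^{-1})+t^{n}q^{|\mu|}\mfrak{G}_{r-1}^{(\mu)}(q^{-1},t^{-1})$, where the second term is your constant-term (residue at infinity) contribution and, as a small correction, the ``nested'' pole created after a localization sits at $q\,(qt)^{1/2}x_{k}^{-1}$, i.e.\ over the shifted variable $q^{-1}x_{k}$, not at $q^{-1}(qt)^{1/2}x_{k}^{-1}$. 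The bookkeeping you flag as the main obstacle is discharged in the paper by two elementary identities expressing $h_{\nu}^{(\mu+\epsilon_{k})}(q^{-1},t^{-1})$ and $T_{q^{-1},x_{k}}h_{\nu}^{(\mu)}(q^{-1},t^{-1})$ in terms of $h_{\nu+\epsilon_{k}}^{(\mu)}(q^{-1},t^{-1})$, combined with the partial-fraction identity of Lemma~\ref{lem:partial_fraction}, $\sum_{k=1}^{n}(q^{-\nu_{k}}-1)\prod_{i\neq k}\frac{x_{k}-q^{-\nu_{i}}x_{i}}{x_{k}-x_{i}}=q^{-|\nu|}-1$, which collapses the sum over $k$ and produces exactly the claimed $q$-Pochhammer coefficients.
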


To prove this theorem, we prepare lemmas.
The first one can be checked by a simple calculation.
\begin{lem}
\label{lem:q-inv_shift}
We have
\begin{equation*}
	\bra{0}\xi (t^{1/2}q^{1/2}x_{k}^{-1})\Gamma _{n}(X)_{+}=T_{q^{-1},x_{k}}\bra{0}\Gamma_{n}(X)_{+}.
\end{equation*}
\end{lem}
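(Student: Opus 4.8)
The plan is to reduce the identity to a termwise comparison of the exponents of two operators built only from the positive creation modes $a_{m}$, $m>0$. First I would exploit the dual vacuum: since $\bra{0}a_{-m}=0$ for $m>0$, the annihilating half $\xi(z)_{-}=\exp\bigl(-\sum_{m>0}\frac{1-t^{-m}}{m}(t/q)^{m/2}a_{-m}z^{m}\bigr)$ acts as the identity when applied from the right to $\bra{0}$, so that $\bra{0}\xi(z)=\bra{0}\xi(z)_{+}$ with $\xi(z)_{+}=\exp\bigl(\sum_{m>0}\frac{1-t^{m}}{m}(t/q)^{m/2}a_{m}z^{-m}\bigr)$.

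The central observation is that both $\xi(z)_{+}$ and $\Gamma_{n}(X)_{+}$ are exponentials of linear combinations of the $a_{m}$ with $m>0$ alone. By the Heisenberg relation (\ref{eq:deformed_Heisenberg}), $[a_{m},a_{m'}]=0$ unless $m+m'=0$, so all positive modes commute among themselves and no operator-product factor is generated: the product $\xi(z)_{+}\Gamma_{n}(X)_{+}$ is simply the exponential of the summed exponents, in which the coefficient of $\frac{1-t^{m}}{m}a_{m}$ equals $(t/q)^{m/2}z^{-m}+\frac{p_{m}^{(n)}(X)}{1-q^{m}}$.

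Next I would specialize $z=t^{1/2}q^{1/2}x_{k}^{-1}$, whereupon $(t/q)^{m/2}z^{-m}=q^{-m}x_{k}^{m}$, turning that coefficient into $\frac{p_{m}^{(n)}(X)}{1-q^{m}}+q^{-m}x_{k}^{m}$. On the target side, $T_{q^{-1},x_{k}}$ replaces $p_{m}^{(n)}(X)$ by $p_{m}^{(n)}(X)-x_{k}^{m}+q^{-m}x_{k}^{m}$, so the exponent of $T_{q^{-1},x_{k}}\Gamma_{n}(X)_{+}$ carries the coefficient $\frac{p_{m}^{(n)}(X)}{1-q^{m}}+x_{k}^{m}\frac{q^{-m}-1}{1-q^{m}}$. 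The two coefficients coincide termwise once one notes the elementary identity $\frac{q^{-m}-1}{1-q^{m}}=q^{-m}$ (valid since $q^{-m}-1=q^{-m}(1-q^{m})$); this exponent matching is the only step that needs attention, and it is purely algebraic.

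Finally, because $T_{q^{-1},x_{k}}$ acts on the symmetric-function variables $x_{1},\dots,x_{n}$ only and leaves the Fock space untouched, it commutes past $\bra{0}$. Stringing the displays together yields $\bra{0}\xi(t^{1/2}q^{1/2}x_{k}^{-1})\Gamma_{n}(X)_{+}=\bra{0}T_{q^{-1},x_{k}}\Gamma_{n}(X)_{+}=T_{q^{-1},x_{k}}\bra{0}\Gamma_{n}(X)_{+}$, as claimed. I expect no genuine obstacle beyond careful bookkeeping of the $q$- and $t$-powers in the third step.
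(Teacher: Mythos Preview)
Your proof is correct and is precisely the ``simple calculation'' the paper alludes to without spelling out: kill $\xi(z)_{-}$ against $\bra{0}$, combine the commuting positive-mode exponentials, specialize $z$, and match against the effect of $T_{q^{-1},x_{k}}$ on $p_{m}^{(n)}(X)$. There is nothing to add.
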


We also have
\begin{lem}
\label{lem:partial_fraction}
Let $\nu=(\nu_{1},\dots,\nu_{n})\in\mbb{Z}_{n}$.
Then
\begin{equation*}
	\sum_{k=1}^{n}(q^{-\nu_{k}}-1)\left(\prod_{i\neq k}\frac{x_{k}-q^{-\nu_{i}}x_{i}}{x_{k}-x_{i}}\right)=q^{-|\nu|}-1.
\end{equation*}
\end{lem}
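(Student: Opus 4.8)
The plan is to deduce this partial-fraction identity from the behaviour of a single auxiliary rational function in one extra variable. Working in the field of rational functions $\mbb{F}(x_{1},\dots,x_{n})$ (so that the $x_{i}$ are pairwise distinct indeterminates and all denominators below are invertible), I would introduce
\[
	F(z):=\prod_{i=1}^{n}\frac{z-q^{-\nu_{i}}x_{i}}{z-x_{i}},
\]
a rational function of a formal variable $z$ whose only singularities are the simple poles at $z=x_{k}$, $k=1,\dots,n$.

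The key steps are the following. First, since the numerator and denominator of $F$ are both monic of degree $n$, we have $F(z)\to 1$ as $z\to\infty$, so the partial-fraction expansion has no polynomial part beyond the constant term $1$:
\[
	F(z)=1+\sum_{k=1}^{n}\frac{R_{k}}{z-x_{k}},\qquad
	R_{k}=x_{k}(1-q^{-\nu_{k}})\prod_{i\neq k}\frac{x_{k}-q^{-\nu_{i}}x_{i}}{x_{k}-x_{i}},
\]
where $R_{k}$ is the residue of $F$ at $z=x_{k}$, obtained by cancelling the factor $z-x_{k}$. Second, I would evaluate $F$ at $z=0$ in two ways. Directly from the product, each factor contributes $(-q^{-\nu_{i}}x_{i})/(-x_{i})=q^{-\nu_{i}}$, so $F(0)=q^{-|\nu|}$. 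From the expansion, $F(0)=1-\sum_{k}R_{k}/x_{k}=1+\sum_{k}(q^{-\nu_{k}}-1)\prod_{i\neq k}\frac{x_{k}-q^{-\nu_{i}}x_{i}}{x_{k}-x_{i}}$, using $-R_{k}/x_{k}=(q^{-\nu_{k}}-1)\prod_{i\neq k}\frac{x_{k}-q^{-\nu_{i}}x_{i}}{x_{k}-x_{i}}$. Equating the two expressions for $F(0)$ and subtracting $1$ yields exactly the asserted identity.

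I do not expect any genuine obstacle here: this is a routine partial-fraction (equivalently, Lagrange-interpolation) computation, and nothing in it uses that the $q^{-\nu_{i}}$ are powers of $q$, so the same argument proves $\sum_{k}(a_{k}-1)\prod_{i\neq k}\frac{x_{k}-a_{i}x_{i}}{x_{k}-x_{i}}=\prod_{i}a_{i}-1$ for arbitrary scalars $a_{i}$. The only points deserving a moment's care are checking that the leading coefficients match so that the constant term of the expansion is precisely $1$, and tracking the sign when passing from $R_{k}/x_{k}$ to $-R_{k}/x_{k}$; both are immediate.
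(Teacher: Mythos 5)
Your proof is correct and is essentially identical to the paper's: both introduce the same rational function $\prod_{i=1}^{n}\frac{z-q^{-\nu_{i}}x_{i}}{z-x_{i}}$, write its partial-fraction expansion with constant term $1$ by comparing residues and the limit $z\to\infty$, and then set $z=0$. Your write-up is merely a more detailed version of the paper's two-line argument, including the accurate observation that the identity holds with arbitrary scalars in place of $q^{-\nu_{i}}$.
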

\begin{proof}
By comparing residues and behavior at $z\to \infty$, we have
\begin{equation*}
	\prod_{i=1}^{n}\frac{z-q^{-\nu_{i}}x_{i}}{z-x_{i}}=\sum_{k=1}^{n}\frac{(1-q^{-\nu_{k}})x_{k}}{z-x_{k}}\left(\prod_{i\neq k}\frac{x_{k}-q^{-\nu_{i}}x_{i}}{x_{k}-x_{i}}\right)+1.
\end{equation*}
Then, setting $z=0$, we can see the desired result.
\end{proof}

\begin{proof}[Proof of Theorem \ref{thm:free_field_G_general}]
We first integrate out the variable $z_{1}$ in Eq. (\ref{eq:G_general}).
Then the residues at $z_{1}=t^{1/2}q^{1/2}x_{i}^{-1}$, $i=1,\dots, n$ and $z_{1}=\infty$ contribute.
(To make this computation easier to see, it might be convenient to transform the variables so that $w_{i}=z_{i}^{-1}$, $i=1,\dots, r$.)
Consequently, we obtain
\begin{align*}
	&\mfrak{G}_{r}^{(\mu)}(q^{-1},t^{-1}) \\
	&=\int \left(\prod_{i=2}^{r}\frac{dz_{i}}{2\pi\sqrt{-1}z_{i}}\right)
	\sum_{k=1}^{n}(1-tq^{\mu_{k}})\left(\prod_{2\le i<j\le r}\frac{1-z_{j}/z_{i}}{1-q^{-1}z_{j}/z_{i}}\right)
		\left(\prod_{j=2}^{r}\frac{1-t^{1/2}q^{\mu_{k}-1/2}x_{k}z_{j}}{1-t^{-1/2}q^{-3/2}x_{k}z_{j}}\right) \\
	&\hspace{120pt}\times \left(\prod_{\substack{i=1\\ i\neq k}}^{n}\prod_{j=2}^{r}\frac{1-t^{1/2}q^{\mu_{i}-1/2}x_{i}z_{j}}{1-t^{-1/2}q^{-1/2}x_{i}z_{j}}\right)
		\left(\prod_{\substack{i=1\\ i\neq k}}^{n}\frac{x_{k}-tq^{\mu_{i}}x_{i}}{x_{k}-x_{i}}\right) \\
	&\hspace{120pt}\times \bra{0}\no{\xi (t^{1/2}q^{1/2}x_{k}^{-1})\xi (z_{2})\cdots \xi (z_{r})}\Gamma _{n}(X)_{+} \\
	&\hspace{20pt}+t^{n}q^{|\mu|}\mfrak{G}_{r-1}^{(\mu)}(q^{-1},t^{-1}).
\end{align*}
Here we shall use Lemma \ref{lem:q-inv_shift} to find
\begin{align*}
	\mfrak{G}_{r}^{(\mu)}(q^{-1},t^{-1})
	&=\sum_{k=1}^{n}(1-tq^{\mu_{k}})\prod_{\substack{i=1\\ i\neq k}}^{n}\frac{x_{k}-tq^{\mu_{i}}x_{i}}{x_{k}-x_{i}}T_{q^{-1},x_{k}}\mfrak{G}_{r-1}^{(\mu+\epsilon_{k})}(q^{-1},t^{-1}) \\
	&\hspace{20pt}+t^{n}q^{|\mu|}\mfrak{G}_{r-1}^{(\mu)}(q^{-1},t^{-1}),
\end{align*}
where we set $\epsilon_{k}=(0,\dots, 0, \overset{k}{\check{1}},0,\dots, 0)$.

We prove the theorem by induction in $r=1,2,\dots$.
When $r=1$, we can see that
\begin{align*}
	\mfrak{G}_{1}^{(\mu)}(q^{-1},t^{-1})
	&=\left(\sum_{k=1}^{n}(1-tq^{\mu_{k}})\prod_{\substack{i=1\\ i\neq k}}^{n}\frac{x_{k}-tq^{\mu_{i}}x_{i}}{x_{k}-x_{i}}T_{q^{-1},x_{k}}+t^{n}q^{|\mu|}\right)
		\bra{0}\Gamma _{n}(X)_{+},
\end{align*}
while, from Eq. (\ref{eq:H_general}), we also have
\begin{equation*}
	H_{1}^{(n),(\mu)}(q^{-1},t^{-1})=-t^{-n}q^{-|\mu|}\sum_{k=1}^{n}\frac{1-tq^{\mu_{k}}}{1-q^{-1}}\prod_{\substack{i=1 \\ i\neq k}}^{n}\frac{x_{k}-tq^{\mu_{i}}x_{i}}{x_{k}-x_{i}}T_{q^{-1},x_{k}}.
\end{equation*}
Therefore, it follows that
\begin{equation*}
	\mfrak{G}_{1}^{(\mu)}(q^{-1},t^{-1})
	=t^{n}q^{|\mu|}\left(1-(1-q^{-1})H_{1}^{(n),(\mu)}(q^{-1},t^{-1})\right)\bra{0}\Gamma_{n}(X)_{+},
\end{equation*}
which is the desired result of the theorem for $r=1$.

We suppose that the theorem holds at $r-1$.
To use this induction hypothesis, we make some preliminaries.
For $\nu=(\nu_{1},\dots,\nu_{n})\in (\mbb{Z}_{\ge 0})^{n}$ and 
$\mu=(\mu_{1},\dots, \mu_{n})\in\mbb{Z}^{n}$, set
\begin{equation*}
	h_{\nu}^{(\mu)}(q^{-1},t^{-1})=
	\left(\prod_{1\le i<j\le n}\frac{q^{-\nu_{i}}x_{i}-q^{-\nu_{j}}x_{j}}{x_{i}-x_{j}}\right)
		\left(\prod_{i,j=1}^{n}\frac{(t^{-1}x_{i}/q^{\mu_{j}}x_{j};q^{-1})_{\nu_{i}}}{(q^{-1}x_{i}/x_{j};q^{-1})_{\nu_{i}}}\right).
\end{equation*}
Then we have the following expression:
\begin{equation*}
	H_{r}^{(n),(\mu)}(q^{-1},t^{-1})=\sum_{\substack{\nu\in(\mbb{Z}_{\ge 0})^{n}\\ |\nu|=r}}h_{\nu}^{(\mu)}(q^{-1},t^{-1})\prod_{i=1}^{n}T_{q^{-1},x_{i}}^{\nu_{i}}.
\end{equation*}
Observe that
\begin{align}
\label{eq:assist1}
	h_{\nu}^{(\mu+\epsilon_{k})}(q^{-1},t^{-1})
	=\frac{1-t^{-1}q^{-\mu_{k}-\nu_{k}}}{1-t^{-1}q^{-\mu_{k}}}\prod_{\substack{i=1\\ i\neq k}}^{n}\frac{x_{k}-t^{-1}q^{-\mu_{k}-\nu_{i}}x_{i}}{x_{k}-t^{-1}q^{-\mu_{k}}x_{i}}h_{\nu}^{(\mu)}(q^{-1},t^{-1})
\end{align}
and
\begin{align}
\label{eq:assist2}
	&T_{q^{-1},x_{k}}h_{\nu}^{(\mu)}(q^{-1},t^{-1})\\
	&=\frac{1-q^{-\nu_{k}-1}}{1-t^{-1}q^{-\mu_{k}-\nu_{k}}}
		\prod_{\substack{i=1\\ i\neq k}}^{n}\frac{t^{-1}q^{-\mu_{k}+1}x_{i}-x_{k}}{x_{i}-t^{-1}q^{-\mu_{i}}x_{i}}\frac{x_{k}-q^{-\nu_{i}}x_{i}}{x_{k}-t^{-1}q^{-\mu_{k}-\nu_{i}+1}x_{i}}h_{\nu+\epsilon_{k}}^{(\mu)}(q^{-1},t^{-1})T_{q^{-1},x_{k}}.\notag
\end{align}

By the induction hypothesis, we have
\begin{align*}
	\mfrak{G}_{r}^{(\mu)}(q^{-1},t^{-1})
	&=\Biggl[t^{(r-1)n}q^{(r-1)(|\mu|+1)}\sum_{l=0}^{r-1}(-1)^{l}q^{\binom{l}{2}}q^{l(r-l+1)}(q^{-r+l};q^{-1})_{l} \\
	&\hspace{20pt}\times\sum_{k=1}^{n}(1-tq^{\mu_{k}})\prod_{\substack{i=1\\ i\neq k}}^{n}\frac{x_{k}-tq^{\mu_{i}}x_{i}}{x_{k}-x_{i}}T_{q^{-1},x_{k}}H_{l}^{(n),(\mu+\epsilon_{k})}(q^{-1},t^{-1}) \\
	&\hspace{20pt}+t^{rn}q^{r|\mu|}\sum_{l=0}^{r-1}(-1)^{l}q^{\binom{l}{2}}q^{l(r-l+1)}(q^{-r+l};q^{-1})_{l}\\
	&\hspace{40pt}\times H_{l}^{(n),(\mu)}(q^{-1},t^{-1})\Biggr]\bra{0}\Gamma_{n}(X)_{+}.
\end{align*}
Let us set
\begin{equation*}
	\mfrak{X}:=\sum_{k=1}^{n}(1-tq^{\mu_{k}})\prod_{\substack{i=1\\ i\neq k}}^{n}\frac{x_{k}-tq^{\mu_{i}}x_{i}}{x_{k}-x_{i}}T_{q^{-1},x_{k}}H_{l}^{(n),(\mu+\epsilon_{k})}(q^{-1},t^{-1}).
\end{equation*}
Then, using Eqs. (\ref{eq:assist1}) and (\ref{eq:assist2}), we can verify that
\begin{align*}
	\mfrak{X}
	&=t^{n}q^{|\mu|}\sum_{\substack{\nu\in(\mbb{Z}_{\ge 0})^{n}\\ |\nu|=l}}\sum_{k=1}^{n}
				(q^{-\nu_{k}-1}-1)\prod_{\substack{i=1 \\ i\neq k}}^{n}\frac{x_{k}-q^{-\nu_{i}}x_{i}}{x_{k}-x_{i}}h_{\nu+\epsilon_{k}}^{(\mu)}(q^{-1},t^{-1})
				T_{q^{-1},x_{k}}\prod_{i=1}^{n}T_{q^{-1},x_{i}}^{\nu_{i}} \\
	&=t^{n}q^{|\mu|}\sum_{\substack{\nu\in(\mbb{Z}_{\ge 0})^{n}\\ |\nu|=l+1}}\left(\sum_{k=1}^{n}
				(q^{-\nu_{k}}-1)\prod_{\substack{i=1 \\ i\neq k}}^{n}\frac{x_{k}-q^{-\nu_{i}}x_{i}}{x_{k}-x_{i}}\right)h_{\nu}^{(\mu)}(q^{-1},t^{-1})
				\prod_{i=1}^{n}T_{q^{-1},x_{i}}^{\nu_{i}}.		
\end{align*}
Here we use Lemma \ref{lem:partial_fraction} to obtain
\begin{equation*}
	\mfrak{X}=t^{n}q^{|\mu|}(q^{-l-1}-1)H_{l+1}^{(n),(\mu)}(q^{-1},t^{-1}).
\end{equation*}
Therefore
\begin{align*}
	\mfrak{G}_{r}^{(\mu)}(q^{-1},t^{-1})
	&=\Biggl[t^{(r-1)n}q^{(r-1)(|\mu|+1)}\sum_{l=0}^{r-1}(-1)^{l}q^{\binom{l}{2}}q^{l(r-l+1)}(q^{-r+l};q^{-1})_{l} \\
	&\hspace{40pt}\times t^{n}q^{|\mu|}(q^{-l-1}-1)H_{l+1}^{(n),(\mu)}(q^{-1},t^{-1}) \\
	&\hspace{20pt}+t^{rn}q^{r|\mu|}\sum_{l=0}^{r-1}(-1)^{l}q^{\binom{l}{2}}q^{l(r-l+1)}(q^{-r+l};q^{-1})_{l}\\
	&\hspace{40pt}\times H_{l}^{(n),(\mu)}(q^{-1},t^{-1})\Biggr]\bra{0}\Gamma_{n}(X)_{+}.
\end{align*}
It can be checked that this coincides with
\begin{align*}
	t^{rn}q^{r|\mu|}\sum_{l=0}^{r}(-1)^{l}q^{\binom{l}{2}}q^{l(r-l)}(q^{-r+l-1};q^{-1})_{l}H_{l}^{(n),(\mu)}(q^{-1},t^{-1})\bra{0}\Gamma_{n}(X)_{+}.
\end{align*}
Then the proof is complete.
\end{proof}

\bibliographystyle{alpha}
\bibliography{mac_process}

\newcommand{\etalchar}[1]{$^{#1}$}
\begin{thebibliography}{GdGW17}

\bibitem[AFH{\etalchar{+}}11]{AwataFeiginHoshinoKanaiShiraishiYanagida2011}
H.~Awata, B.~Feigin, A.~Hoshino, M.~Kanai, J.~Shiraishi, and S.~Yanagida.
\newblock Notes on {Ding-Iohara} algebra and {AGT} conjecture, 2011.
\newblock arXiv:1106.4088.

\bibitem[AMOS95]{AwataMatsuoOdakeShiraishi1995}
H.~Awata, Y.~Matsuo, S.~Odake, and J.~Shiraishi.
\newblock Collective field theory, {Calogero-Sutherland} model and generalized
  matrix model.
\newblock {\em Phys. Lett. B}, 347:49--55, 1995.

\bibitem[AOS96]{AwataOdakeShiraishi1996}
H.~Awata, S.~Odake, and J.~Shiraishi.
\newblock Integral representations of the {Macdonald} symmetric polynomials.
\newblock {\em Commun. Math. Phys.}, 179:647--666, 1996.

\bibitem[Bar15]{Barraquand2015}
G.~Barraquand.
\newblock A phase transition for $q$-{TASEP} with a few slower particles.
\newblock {\em Stoch. Process. Their Appl.}, 125:2674--2699, 2015.

\bibitem[BBBF18]{BrubakerBuciumasBumpFriedberg2018}
B.~Brubaker, V.~Buciumas, D.~Bump, and S.~Friedberg.
\newblock Hecke modules from metaplectic ice.
\newblock {\em Sel. Math., New Ser.}, 24:2523--2570, 2018.

\bibitem[BBBG18]{BrubakerBuciumasBumpGustafsson2018}
B.~Brubaker, V.~Buciumas, D.~Bump, and H.~Gustafsson.
\newblock Vertex operators, solvable lattice models and metaplectic {Whittaker}
  functions, 2018.
\newblock arXiv:1806.07776.

\bibitem[BBC20]{BarraquandBorodinCorwin2020}
G.~Barraquand, A.~Borodin, and I.~Corwin.
\newblock Half-space {Macdonald} processes.
\newblock {\em Forum of Mathematics, Pi}, 8:e11, 2020.

\bibitem[BC14a]{BorodinCorwin2014}
A.~Borodin and I.~Corwin.
\newblock {Macdonald} processes.
\newblock {\em Probab. Theory Relat. Fields}, 158:225--400, 2014.

\bibitem[BC14b]{BorodinCorwin2014b}
A.~Borodin and I.~Corwin.
\newblock Moments and {Lyapunov} exponents for the parabolic {Anderson} model.
\newblock {\em Ann. Appl. Probab.}, 24:1172--1198, 2014.

\bibitem[BC15]{BorodinCorwin2015}
A.~Borodin and I.~Corwin.
\newblock Discrete time $q$-{TASEP}s.
\newblock {\em Int. Math. Res. Not.}, 2015:499--537, 2015.

\bibitem[BCF14]{BorodinCorwinFerrari2014}
A.~Borodin, I.~Corwin, and P.~Ferrari.
\newblock Free energy fluctuations for directed polymers in random media in
  $1+1$ dimension.
\newblock {\em Commun. Pure Appl. Math.}, 67:1129--1214, 2014.

\bibitem[BCFV15]{BorodinCorwinFerrariVeto2015}
A.~Borodin, I.~Corwin, P.~Ferrari, and B.~Vet{\H o}.
\newblock Height fluctuations for the stationary {KPZ} equation.
\newblock {\em Math. Phys. Anal. Geom.}, 18:Art. 20, 95, 2015.

\bibitem[BCG16]{BorodinCorwinGorin2016}
A.~Borodin, I.~Corwin, and V.~Gorin.
\newblock Stochastic six-vetex model.
\newblock {\em Duke Math. J.}, 165:563--624, 2016.

\bibitem[BCGS16]{BorodinCorwinGorinShakirov2016}
A.~Borodin, I.~Corwin, V.~Gorin, and S.~Shakirov.
\newblock Observables of {Macdonald} processes.
\newblock {\em Trans. Amer. Math. Soc.}, 368:1517--1558, 2016.

\bibitem[BCR13]{BorodinCorwinRemenik2013}
A.~Borodin, I.~Corwin, and D.~Remenik.
\newblock Log-{Gamma} polymer free energy fluctuations via a {Fredholm}
  determinant identity.
\newblock {\em Commun. Math. Phys.}, 324:215--232, 2013.

\bibitem[BG12]{BorodinGorin2012}
A.~Borodin and V.~Gorin.
\newblock Lectures on integrable probability, 2012.
\newblock Lecture notes, arXiv:1212.3351.

\bibitem[BG15]{BorodinGorin2015}
A.~Borodin and V.~Gorin.
\newblock General $\beta$-jacobi corners process and the {Gaussian} free field.
\newblock {\em Commun. Pure Appl. Math.}, 68:1774--1844, 2015.

\bibitem[Bor14]{Borodin2014}
A.~Borodin.
\newblock Integrable probability, 2014.
\newblock ICM, Seoul.

\bibitem[Bor17]{Borodin2017}
A.~Borodin.
\newblock On a family of symmetric rational functions.
\newblock {\em Adv. Math.}, 306:973--1018, 2017.

\bibitem[Bor18]{Borodin2018}
A.~Borodin.
\newblock Stochastic higer spin six vertex model and {Macdonald} measures.
\newblock {\em J. Math. Phys.}, 59:023301, 2018.

\bibitem[BP14]{BorodinPetrov2014}
A.~Borodin and L.~Petrov.
\newblock Integrable probability: From representation theory to {Macdonald}
  process.
\newblock {\em Probab. Surveys}, 11:1--58, 2014.

\bibitem[BP18]{BorodinPetrov2018}
A.~Borodin and L.~Petrov.
\newblock Higher spin six vertex model and symmetric rational functions.
\newblock {\em Sel. Math., New Ser.}, 24:751--874, 2018.

\bibitem[BW18]{BorodinWheeler2018}
A.~Borodin and M.~Wheeler.
\newblock Coloured stochastic vertex models and their spectral theory, 2018.
\newblock arXiv:1808.01866.

\bibitem[BW19]{BorodinWheeler2019}
A.~Borodin and M.~Wheeler.
\newblock Nonsymmetric {Macdonald} polynomials via integrable vertex models,
  2019.
\newblock arXiv:1904.06804.

\bibitem[Cor14]{Corwin2014}
I.~Corwin.
\newblock Macdonald process, quantum integrable systems and the
  {Kardar-Parisi-Zhang} universality class, 2014.
\newblock Proceedings of the International Congress of Mathematicians.

\bibitem[COSZ14]{CorwinOconnellSeppalainenZygouras2014}
I.~Corwin, N.~O'Connell, T.~Sepp{\"a}l{\"a}inen, and N.~Zygouras.
\newblock Tropical combinatorics and {Whittaker} functions.
\newblock {\em Duke Math. J.}, 163:513--563, 2014.

\bibitem[CP16]{CorwinPetrov2016}
I.~Corwin and L.~Petrov.
\newblock Stochastic higer spin six vertex models on the line.
\newblock {\em Commun. Math. Phys.}, 343:651--700, 2016.

\bibitem[DI97]{DingIohara1997}
J.~Ding and K.~Iohara.
\newblock Generalization of {Drinfeld} quantum affine algebras.
\newblock {\em Lett. Math. Phys.}, 41:181--193, 1997.

\bibitem[Dim18]{Dimitrov2018}
E.~Dimitrov.
\newblock {KPZ} and {Airy} limits of {Hall--Littlewood} random plane
  partitions.
\newblock {\em Ann. Inst. Poincar{\'e} Probab. Statist.}, 54:640--693, 2018.

\bibitem[FHH{\etalchar{+}}09]{FeiginHashizumeHoshinoShiraishiYanagida2009}
B.~Feigin, K.~Hashizume, A.~Hoshino, J.~Shiraishi, and S.~Yanagida.
\newblock A commutative algebra on degenerate $\mathbb{CP}^{1}$ and {Macdonald}
  polynomials.
\newblock {\em J. Math. Phys.}, 50:095215, 2009.

\bibitem[FHS{\etalchar{+}}10]{FeiginHoshinoShibaharaShiraishiYanagida2010}
B.~Feigin, A.~Hoshino, J.~Shibahara, J.~Shiraishi, and S.~Yanagida.
\newblock Kernel function and quantum algebra.
\newblock {\em RIMS kokyuroku}, 1689:133--152, 2010.
\newblock arXiv:1002.2485.

\bibitem[FOS19]{FukudaOhkuboShiraishi2019}
M.~Fukuda, Y.~Ohkubo, and J.~Shiraishi.
\newblock Generalized {Macdonald} functions on {Fock} tensor spaces and duality
  formula for changing preferred direction, 2019.
\newblock arXiv:1903.05905.

\bibitem[FV15]{FerrariVeto2015}
P.~L. Ferrari and B.~Vet{\H o}.
\newblock {Tracy--Widom} asymptotics for $q$-{TASEP}.
\newblock {\em Ann. Inst. H. Poincar{\' e} Probab. Statist.}, 51:1465--1485,
  2015.

\bibitem[FW17]{FodaWu2017}
O.~Foda and J.-F. Wu.
\newblock A {Macdonald} refined topological vertex.
\newblock {\em J. Phys. A: Math. Theor.}, 50:294003, 2017.

\bibitem[GdGW17]{GarbalideGierWheeler2017}
A.~Garbali, J.~de~Gier, and M.~Wheeler.
\newblock A new generalisation of {Macdonald} polynomials.
\newblock {\em Commun. Math. Phys.}, 352:773--804, 2017.

\bibitem[GZ18]{GorinZhang2018}
V.~Gorin and L.~Zhang.
\newblock Interacting adjacent levels of $\beta$-{Jacobi} corners process.
\newblock {\em Probab. Theory Relat. Fields}, 172:915--981, 2018.

\bibitem[Jin94]{Jing1994}
N.~Jing.
\newblock $q$-hypergeometric series and {Macdonald} functions.
\newblock {\em J. Algebr. Comb.}, 3:291--305, 1994.

\bibitem[Ker92]{Kerov1992}
S.~Kerov.
\newblock Generalized {Hall--Littlewood} symmetric functions and orthogonal
  polynomials.
\newblock In {\em Representation Theory and Dynamical Systems}, volume~9 of
  {\em Advances in Soviet Mathematics}, pages 67--94. American Mathematical
  Society, Providence, RI, 1992.

\bibitem[KRR13]{KacRainaRozhkovskaya2013}
V.~G. Kac, A.~K. Raina, and N.~Rozhkovskaya.
\newblock {\em Bombay Lectures on Highest weight representations of infinite
  dimensional Lie algebras}, volume~29 of {\em Advanced Series in Mathematical
  Physics}.
\newblock World Scientific, Singapore, 2nd edition, 2013.

\bibitem[Mac95]{Macdonald1995}
I.~G. Macdonald.
\newblock {\em Symmetric functions and {Hall} polynomials}.
\newblock Oxford Mathematical Monographs. Oxford University Press, Oxford U.K.,
  2nd edition, 1995.

\bibitem[Mat19]{Matveev2019}
K.~Matveev.
\newblock Macdonald-positive specializations of the algebra of symmetric
  functions: {Proof} of the {Kerov} conjecture.
\newblock {\em Ann. Math.}, 189:277--316, 2019.

\bibitem[Mik07]{Miki2007}
K.~Miki.
\newblock A $(q,\gamma)$ analog of the {$W_{1+\infty}$}-algebra.
\newblock {\em J. Math. Phys.}, 48:123520, 2007.

\bibitem[MM19]{MironovMorozov2019}
A.~Mironov and A.~Morozov.
\newblock On generalized {Macdonald} polynomials, 2019.
\newblock arXiv:1907.05410.

\bibitem[NS13]{NazarovSklyanin2013a}
M.~L. Nazarov and E.~K. Sklyanin.
\newblock {Sekiguchi-Debiard} operators at infinity.
\newblock {\em Commun. Math. Phys.}, 324:831--849, 2013.

\bibitem[O'C12]{Oconnell2012}
N.~O'Connell.
\newblock Directed polymers and the quantum {Toda} lattice.
\newblock {\em Ann. Probab.}, 40:437--458, 2012.

\bibitem[Ohk17]{Ohkubo2017}
Y.~Ohkubo.
\newblock Generalized {Jack} and {Macdonald} polynomials arising from {AGT}
  conjecture.
\newblock {\em J. Phys.: Conf. Ser.}, 804:012036, 2017.

\bibitem[Oko01]{Okounkov2001}
A.~Okounkov.
\newblock Infinite wedge and random partition.
\newblock {\em Sel. Math., New ser.}, 7:57--81, 2001.

\bibitem[OP06]{OkounkovPandharipande2006}
A.~Okounkov and R.~Pandharipande.
\newblock {Gromov}-{Witten} theory, {Hurwitz} theory, and completed cycles.
\newblock {\em Ann. Math.}, 163:517--560, 2006.

\bibitem[OR03]{OkounkovReshetikhin2003}
A.~Okounkov and N.~Reshetikhin.
\newblock Correlation function of {Schur} process with application to local
  geometry of a random 3-dimensional {Young} diagram.
\newblock {\em J. Amer. Math. Soc.}, 16:581--603, 2003.

\bibitem[OSZ14]{OconnellSeppalainenZygouras2014}
N.~O'Connell, T.~Sepp{\"a}l{\"a}inen, and N.~Zygouras.
\newblock Geometric {RSK} correspondence, {Whittaker} functions and symmetrized
  random polymers.
\newblock {\em Invent math}, 197:361--416, 2014.

\bibitem[Pet09]{Petrov2009}
L.~A. Petrov.
\newblock Two-parameter family of infinite-dimensional diffusions on the
  {Kingman} simplex.
\newblock {\em Func. Anal. Its Appl.}, 43:279--296, 2009.

\bibitem[Pro19]{Prochazka2019}
T.~Proch{\'a}zka.
\newblock Instanton {$R$}-matrix and $\mathcal{W}$-symmetry.
\newblock {\em JHEP}, 12:099, 2019.

\bibitem[Sai13]{Saito2013}
Y.~Saito.
\newblock Elliptic {Ding-Iohara} algebra and the free field realization of the
  elliptic {Macdonald} operator, 2013.
\newblock arXiv:1301.4912.

\bibitem[Sai14]{Saito2014}
Y.~Saito.
\newblock Commutative families of the ellipric {Macdonald} operator.
\newblock {\em SIGMA}, 10:021, 2014.

\bibitem[Shi06]{Shiraishi2006}
J.~Shiraishi.
\newblock A family of integral transformations and basic hypergeometric series.
\newblock {\em Commun. Math. Phys.}, 263:439--460, 2006.

\end{thebibliography}

\end{document}